\definecolor{gray}{rgb}{0.8,0.8,0.8}
\def\Links{\tagsleft@true}\def\Rechts{\tagsleft@false}
\renewcommand{\P}{\mathbb{P}}
\newcommand{\E}{\mathbb{E}}
\newcommand{\N}{\mathds{N}}
\newcommand{\R}{\mathds{R}}
\renewcommand{\d}{\Delta}
\newcommand{\KLEINO}{{\scriptstyle{\mathcal{O}}}}
\DeclareMathAccent{\verywidehat}{\mathord}{largesymbols}{'144}
\newcommand{\var}{\mathbb{V}\hspace*{-0.05cm}\textnormal{a\hspace*{0.02cm}r}}
\newcommand{\Cov}{\mathbb{C}\textnormal{O\hspace*{0.02cm}V}}
\DeclareMathOperator{\IV}{\mathbf{\widehat{IV}}}
\DeclareMathOperator{\ICV}{\mathbf{\widehat{ICV}}}
\DeclareMathOperator{\LMM}{\mathbf{{LMM}}}
\renewcommand{\:}{\mathrel{\mathop{:}}}
\newdefinition{remark}{Remark}
\newdefinition{defi}{Definition}
\newtheorem{theo}{Theorem}
\newtheorem{assump}{Assumption}
\newtheorem{assum}{Assumption}
\newtheorem{assumpn}{Assumption}
\newtheorem{assumn}{Assumption}
\newtheorem{prop}{Proposition}[section]
\newtheorem{lem}[prop]{Lemma}
\begin{document}
\global\long\def\I{\mathbf{1}}
\global\long\def\f#1{\lfloor#1\rfloor}
\global\long\def\c#1{\mathcal{#1}}
\global\long\def\l#1{\left.#1\right|}
\allowdisplaybreaks

\begin{frontmatter}

\title{Functional stable limit theorems for quasi-efficient spectral covolatility estimators}
\author{Randolf Altmeyer\footnote{Financial support from the Deutsche Forschungsgemeinschaft via SFB 649 `\"Okonomisches Risiko', Humboldt-Universit\"at zu Berlin, is gratefully acknowledged. R.A. gratefully acknowledges the financial support from the DFG Research Training Group 1845.\\
The authors are grateful to two referees whose constructive valuable remarks helped improving the first version of the paper considerably.}\& Markus Bibinger\footnotemark[1]}
\address{Institut f\"ur Mathematik, Humboldt-Universit\"at zu Berlin, Unter den Linden 6, 10099 Berlin, Germany}
\date{\today}
\begin{abstract}
We consider noisy non-synchronous discrete observations of a continuous semimartingale with random volatility. Functional stable central limit theorems are established under high-frequency asymptotics in three setups: one-dimensional for the spectral estimator of integrated volatility, from two-dimensional asynchronous observations for a bivariate spectral covolatility estimator and multivariate for a local method of moments. The results demonstrate that local adaptivity and smoothing noise dilution in the Fourier domain facilitate substantial efficiency gains compared to previous approaches. In particular, the derived asymptotic variances coincide with the benchmarks of semiparametric Cram\'{e}r-Rao lower bounds and the considered estimators are thus asymptotically efficient in idealized sub-experiments. Feasible central limit theorems allowing for confidence bounds are provided.
\end{abstract}

\begin{keyword}
adaptive estimation\sep asymptotic efficiency \sep local parametric estimation\sep  microstructure noise \sep integrated volatility \sep non-synchronous observations  
\\[.2cm]
\textit{AMS 2000 subject classification:} 62G05\sep 62G20 \sep 62M10 \\[.2cm]
\end{keyword}
\end{frontmatter}
\thispagestyle{plain}
\section{Introduction\label{sec:1}}
The estimation of integrated volatility and integrated covolatility (sometimes referred to as integrated variance and covariance) from high-frequency data is a vibrant current research topic in statistics for stochastic processes. Semimartingale log-price models are widely used in financial econometrics. Its pivotal role for portfolio optimization and risk management makes the integrated covolatility matrix a key quantity of interest in econometrics and finance. During the last decade the increasing availability of data from high-frequency trades or orders has provided the statistician with rich data sets. Yet, the consequence of this richness of data is double-edged. On the one hand, high-frequency observations are almost close to continuous-time observations which should allow for a very precise and highly efficient semiparametric estimation of integrated covolatilities. On the other hand, it has turned out that a traditional pure semimartingale model has several limitations in describing stylized facts of high-frequency data and therefore does not afford a suitable risk estimation. It is nowadays well-known that effects ascribed to market microstructure frictions interfere at high frequencies with a latent price evolution that can be appropriately described by a semimartingale. Also, non-synchronous observations in a multi-dimensional setup require a thorough handling or can cause unwanted effects.\\[.2cm]
A prominent model that accounts for market microstructure in high-frequency observations is an additive noise model, in which a continuous semimartingale is observed with i.i.d.\,observation errors. The problem of estimating the integrated volatility in the one-dimensional model with noise, as well as multi-dimensional covolatility matrix estimation from \textit{noisy and non-synchronous} observations, have attracted a lot of attention in recent years and stimulated numerous research contributions from different areas. Apart from the importance for applications, the model allures researchers from stochastic calculus and mathematical statistics foremost with its intriguing intrinsic properties and surprising new effects. 

In this work, we establish central limit theorems for estimators in a general setup, first in the one-dimensional case with noisy discrete observations of a continuous semimartingale, and then in the multi-dimensional case where observations are also non-synchronous. The volatility is allowed to be random. The vital novelty is that the obtained asymptotic variances of the spectral estimators are smaller than for previous estimation approaches and coincide with the lower Cram\'{e}r-Rao-type bounds in simplified sub-models with deterministic or independent volatility. The estimators are thus \textit{asymptotically efficient} in these sub-models in which the notion of efficiency is meaningful and explored in the literature. As Cram\'{e}r-Rao lower bounds apply only to the idealized setup, in which the estimated volatility can be treated as non-random and under Gaussian noise, we call the estimators which attain the associated variance in a more general setup \textit{quasi-efficient}. \textit{Stability} of weak convergence and \textit{feasible limit theorems} allow for confidence intervals in the model with stochastic volatility.\\[.2cm]
There exist two major objectives in the strand of literature on volatility estimation: 
\begin{enumerate}
\item
Providing methods for feasible inference in general and realistic models.
\item 
Attaining the lowest possible asymptotic variance.
\end{enumerate}
The one-dimensional parametric experiment in which the volatility $\sigma$ is a constant parameter without drift and with Gaussian i.i.d.\,noise has been well understood by a LAN (local asymptotic normality) result by \cite{gloter}. While it is commonly known that for $n$ regularly spaced discrete observations $n^{1/2}$ is the optimal convergence rate in the absence of noise and $2\sigma^4$ is the variance lower bound, \cite{gloter} showed that with noise the optimal rate declines to $n^{1/4}$ and the lower variance bound is $8\eta\sigma^3$, when $\eta^2$ is the variance of the noise. Recent years have witnessed the development and suggestion of various estimation methods in a nonparametric framework that can provide consistent rate-optimal estimators. \textit{Stable central limit theorems} have been proved. Let us mention the prominent approaches by \cite{zhang}, \cite{bn2}, \cite{JLMPV} and \cite{xiu} for the one-dimensional case and \cite{sahalia}, \cite{bn1}, \cite{bibinger} and \cite{PHY} for the general multi-dimensional setup.  A major focus has been to attain a minimum (asymptotic) variance among all proposed estimators which at the slow optimal convergence rate could result in substantial finite-sample precision gains. For instance \cite{bn2} have put emphasis on the construction of a version of their kernel estimator which asymptotically attains the bound $8\eta\sigma^3$ in the parametric sub-experiment. Nonparametric efficiency is considered by \cite{reiss} in the one-dimensional setup and recently by \cite{BHMR} in a multi-dimensional non-synchronous framework. Also for the nonparametric experiment without noise efficiency is subject of current research, see \cite{renault} and \cite{gloter2} for recent advances. \cite{jacodmykland} have recently proposed an adaptive version of their pre-average estimator which achieves an asymptotic variance of ca.\,$1.07\cdot 8\eta\int_0^t\sigma_s^3\,ds$ where $8\eta\int_0^t\sigma_s^3\,ds$ is the nonparametric lower bound. In case of random endogenous observation times or noise, the statistical properties could potentially change, see e.g.\,\cite{lizhang}.\\[.2cm]
\cite{reiss} introduced a spectral approach motivated by an equivalence of continuous-time observation nonparametric and locally parametric experiments using a local method of moments. His estimator has been extended to discrete observations and the multi-dimensional setting in \cite{bibingerreiss} and \cite{BHMR}. In contrast to all previous estimation techniques, the spectral estimator attains the Cram\'{e}r-Rao efficiency lower bound for the asymptotic variance. However, the notion of nonparametric efficiency and the construction of the estimators have been restricted to the simplified statistical experiment where a continuous martingale without drift and with time-varying but deterministic volatility is observed with additive Gaussian noise. The main contribution of this work is to investigate the spectral approach under model misspecification in the general nonparametric standard setup, i.e.\,with drift, a general random volatility process and a more general error distribution. We show that the estimators significantly improve upon existing estimation methods also in more complex models which are of central interest in finance and econometrics. 
We pursue a high-frequency asymptotic distribution theory. The main results are functional stable limit theorems with optimal convergence rates and with asymptotic variances that coincide with the lower bounds in the sub-experiments. The asymptotic analysis combines the theory of \cite{jacodkey}, applied in similar context also in \cite{fukasawa} and \cite{hy3}, with Fourier analysis and matrix algebra. This is due to the fact that the efficient spectral estimation employs smoothing in the Fourier domain and because the method of moments is based on multi-dimensional Fisher information matrix calculus. In principle, the spectral estimators are based on weighted averages in the spectral domain of the observations with optimal weights depending on the local covolatility matrix. We therefore provide also adaptive versions of the spectral estimators where in a first step the local covolatility matrices are pre-estimated from the same data. This two-stage method readily provides feasible limit theorems.\\
This article is structured into six sections. Following this introduction, Section 2 introduces the statistical model and outlines all main results in a concise overview. Section 3 revisits the elements of the spectral estimation approach and the multivariate local method of moments. In Section 4 we explain the main steps for proving the functional central limit theorems. Mathematical details are given in Section 6. In Section 5 we present a Monte Carlo study.
\section{Statistical model \& Main results\label{sec:2}}
Let us first introduce the statistical model, fix the notation and gather all assumptions for the one- and the multi-dimensional setup. 
\subsection{Theoretical setup and assumptions\label{sec:2.1}}
First, consider a one-dimensional continuous It\^{o} semimartingale
\begin{align}\label{sm}X_t=X_0+\int_0^tb_s\,ds+\int_0^t\sigma_s\,dW_s\,,\end{align}
on a filtered probability space $\big(\Omega^0,\mathcal{F},(\mathcal{F}_t)_{0\le t\leq1},\P^0\big)$ with $(\mathcal{F}_t)$ being a right-continuous and complete filtration and $W$ a one-dimensional standard Brownian motion. 
\begin{assump}\label{H1}
We pursue the asymptotic analysis under two structural hypotheses for the volatility process of which one must be satisfied:
\begin{itemize}
\item[($\sigma-1$)]
There exists a locally bounded process $L_t, 0\leq t\leq1$,
such that $t\mapsto\sigma_{t}$ is almost surely $\alpha$-H\"older
continuous on $\left[0,t\right]$ for $\alpha>1/2$ and H\"older
constant $L_t$, i.e.\,$\left|\sigma_{\tau}-\sigma_{s}\right|\leq L_t\left|\tau-s\right|^{\alpha},\,0\leq \tau,s\leq t$,
almost surely.
\item[($\sigma-2$)]
The process $\sigma_t$ is an It\^{o} semimartingale with locally bounded characteristics.
\end{itemize}
Furthermore, suppose $\sigma_{t}$ never vanishes. For the drift process, assume there exists a locally bounded process $L_t', 0\leq t\leq1$,
such that $b_s=g(b_s^A,b_s^B)$ with a function $g$, continuously differentiable in both
coordinates, and $t\mapsto b_{t}^B$ is almost surely $\nu$-H\"older continuous
on $\left[0,t\right]$ for $\nu>0$ and H\"older constant $L_t'$, i.e.\,$\left|b_{\tau}^B-b_{s}^B\right|\leq L_t'\left|\tau-s\right|^{\nu}$, $0\leq \tau,s\leq t$,
almost surely. The process $b_s^A,0\leq s\leq 1$, is an It\^{o} semimartingale with locally bounded characteristics.
\end{assump}
The assumption grants for $0\leq t+s\leq1$, $t\ge 0$, some constants $C_n,K_n>0$, some $\alpha>1/2$ and a sequence of stopping times $T_n$ increasing to $\infty$ that 
\begin{align}
\label{JM2}\Big|\E\big[\sigma_{(t+s)\wedge T_n}-\sigma_{t\wedge T_n}|\mathcal{F}_t\big]\Big| & \le C_n\,s^{\alpha}\,,\\
\label{JM}\E\Big[\sup_{\tau\in[0,s]}(\sigma_{(\tau+t)\wedge T_n}-\sigma_{t \wedge T_n})^2\Big] & \le K_n\,s\,.
\end{align}
The estimate \eqref{JM} corresponds to Assumption (L) and Equation (4.1) of \cite{jacodmykland}, and provides the bound which is required for the proofs. It does not exclude jumps, but fixed times of discontinuity. In particular $\sigma_t$ may be correlated with $X_t$, thus allowing for leverage. The mild smoothness assumption on $b$ turns out be essential, see the proof of Proposition \ref{propremainder}. \\ 
We work within the model where we have indirect observations of $X$ diluted by noise. 
\begin{assumpn}
\label{noise1}Let $\left(\epsilon_{t}\right)_{0\leq t\leq 1}$
be an i.i.d. white noise process with $\E[\epsilon_{t}^{8}]<\infty$
and variances $\E[\epsilon_{t}^{2}]=\eta^{2}>0$. We assume the noise
process is independent of $\c F$. Set $\mathcal{G}_{t}=\mathcal{F}_{t}\otimes\sigma(\epsilon_{s}:s\leq t)$ for $0\leq t\leq 1$ and let $(\Omega,\mathcal{G},(\mathcal{G}_{t})_{0\leq t\leq 1},\P)$
be a filtered probability space which accommodates the signal and
the noise processes and extends the space $(\Omega^{0},\c F,(\c F_{t})_{0\leq t\leq 1},\P^{0})$.
On this extension the semimartingale $X$ is observed at regular times
$i/n,\,i=0,\dots,n$, with additive noise: 
\begin{align}
Y_{i}=X_{i/n}+\epsilon_{i/n}\,,i=0,\ldots,n~.\label{observ}
\end{align}
\end{assumpn}
In fact, we consider a sequence of observed processes $(Y_i)_{i=0,\ldots,n}$ depending on $n$. For notational brevity, we shortly write $Y$ for the observation process. For more details on the extension of the probability space in a similar setup and a discussion of different assumptions for the noise process we refer to Equation (2.2) and Example 2.1 of \cite{jacodmykland}. Let us mention that presumably our method is not restricted to noise which is independent of the signal. This is merely a technical assumption simplifying the proofs.
For the multi-dimensional case, we focus on a $d$-dimensional continuous It\^{o} semimartingale
\begin{align}\label{smd}X_t&=X_0+\int_0^t b_s\,ds+\int_0^t\sigma_s\,dW_s\end{align}
on a filtered probability space $(\Omega^0,\mathcal{F},(\mathcal{F}_t)_{0\leq t\leq 1},\P^0)$ with $(\mathcal{F}_t)$ being a right-continuous and complete filtration and $W$ being here a $d$-dimensional $(\mathcal{F}_t)$-adapted standard Brownian motion. The {\textit{integrated covolatility matrix}} is denoted $\int_0^t \Sigma_s\,ds$, $\Sigma_s=\sigma_s\sigma_s^{\top}$. It coincides with the quadratic covariation matrix $[X,X]_t$ of the continuous semimartingale $X$. We denote the spectral norm by $\|\cdot\|$ and define $\|f\|_{\infty}\:=\sup_{t\in[0,1]}{\|f(t)\|}$ for functions $f:[0,1]\rightarrow\mathbb{R}^{d\times d'}$. Consider H\"older balls of order $\alpha\in(0,1]$ and with radius $R>0$:
\begin{align*}&C^{\alpha}(R)=\{f\in C^{\alpha}([0,1],\mathds{R}^{d\times d^{\prime}})|\|f\|_{C^{\alpha}}\le R\}\,,\,\|f\|_{C^{\alpha}}\:=\|f\|_{\infty}+\sup_{x\ne y}{\frac{\|f(x)-f(y)\|}{|x-y|^{\alpha}}}\,.\end{align*}
We assume the following regularity conditions. 
\begin{assum}\label{Hd}
The stochastic instantaneous volatility process $\sigma$ is a $(d\times d^{\prime})$-dimensional $(\mathcal{F}_t)$-adapted process satisfying one of the following regularity conditions:
\begin{itemize}
\item[($\Sigma-1$)]$\sigma\in C^{\alpha}(R)$ for some $R>0$ and with H\"older exponent $\alpha>1/2$.
\item[($\Sigma-2$)]
$\sigma$ is an It\^{o} semimartingale whose characteristics are assumed to be locally bounded.
\end{itemize}
Furthermore, we assume that the positive definite matrix $\Sigma_s$ satisfies $\Sigma_s\ge \underline\Sigma E_d$, where $E_d$ is the $d$-dimensional identity matrix, in the sense of L\"owner ordering of positive definite matrices. This is the analogue of ``$\sigma_t$ never vanishes'' for $d=1$. The drift $b$ is a $d$-dimensional $(\mathcal{F}_t)$-adapted process given by a function $g(b_s^A, b_s^B)$ which is continuously differentiable in all coordinates with $b^B\in C^{\nu}(R)$ for some $R>0,\nu>0$, and $b^A$ a $d$-dimensional It\^{o} semimartingale with locally bounded characteristics.
\end{assum}
We consider a very general framework with noise and in which observations come at non-syn\-chronous sampling times.
\begin{assumn}\label{noise2}
Let $\left(\epsilon_{t}\right)_{0\leq t\leq 1}$
be a $d$-dimensional i.i.d. white noise process with independent
components such that $\E[(\epsilon_{t}^{(l)})^{8}]<\infty$ and $\var(\epsilon_{t}^{(l)})=\eta_l^{2}>0$
for all $0\leq t\leq 1$ and $l=1,\dots,d$. We assume the noise process
is independent of $\c F$. Set $\mathcal{G}_{t}=\mathcal{F}_{t}\otimes\sigma(\epsilon_{s}:s\leq t)$
for $t\geq0$ and let $(\Omega,\mathcal{G},(\mathcal{G}_{t})_{0\leq t\leq1},\P)$
be a filtered probability space which accommodates the signal and
the noise processes and extends the space $(\Omega^{0},\c F,(\c F_{t})_{0\leq t\leq1},\P^{0})$.
The signal process $X$ of the form \eqref{smd} is discretely and
non-synchronously observed and subject to additive noise. Observation
times $t_{i}^{(l)},i=0,\dots,n_{l},l=1,\ldots,d$, are described by
quantile transformations $t_{i}^{(l)}=F_{l}^{-1}(i/n_{l})$, with differentiable possibly random distribution functions
$F_{l}, F_{l}(0)=0$, $F_{l}(1)=1$ and $F_{l}'\in C^{\alpha}([0,1],[0,1])$
with $\|F'\|_{C^{\alpha}}$ bounded for some $\alpha>1/2$ and $F_{l}'$
strictly positive. The observation times are independent of $X$.
We assume that $n/n_{l}\rightarrow\nu_{l}$ as $n\rightarrow\infty$
with constants $0<\nu_{l}<\infty$. Observations
are then given by 
\[
Y_{i}^{(l)}=X_{t_{i}^{(l)}}^{(l)}+\epsilon_{t_{i}^{(l)}}^{(l)}\,,i=0,\ldots,n_l,l=1,\ldots,d~.
\]
\end{assumn} 
Our analysis includes deterministic and random observation times which are independent of $Y$. Though Assumption \ref{noise2} displays to some extent still an idealization of realistic market microstructure dynamics, our observation model constitutes the established setup in related literature and captures the main ingredients of realistic log-price models.
\subsection{Mathematical concepts and notation\label{sec:2.2}}
Denote $\Delta_i^n Y^{(l)}=Y^{(l)}_{i}-Y^{(l)}_{{i-1}},i=1,\dots,{n_l}, l=1,\ldots,d,$ the increments of $Y^{(l)}$ and analogously for $X$ and other processes. In the one-dimensional case, we write $\Delta^n Y=(\Delta_i^n Y)_{i=1,\dots,n}\in\mathds{R}^n$ for the vector of increments. We shall write $Z_n=\mathcal{O}_{\P}(\delta_n)$ ($Z_n=\KLEINO_{\P}(\delta_n)$) for real random variables, to express that the sequence $\delta_n^{-1}Z_n$ is bounded (tends to zero) in probability under $\P$. Analogously $\mathcal{O}$ and $\KLEINO$ are used for deterministic sequences. To express that terms are of the same asymptotic order we write $Z_n{\asymp}_p Y_n$ if $Z_n=\mathcal{O}_{\P}(Y_n)$ and $Y_n=\mathcal{O}_{\P}(Z_n)$ and likewise $\asymp$ for deterministic terms. Also, we use the short notation $A_n\lesssim B_n$ for $A_n=\mathcal{O}(B_n)$. Convergence in probability and weak convergence are denoted by $\stackrel{\P}{\rightarrow}$ and $\stackrel{d}{\longrightarrow}$;$\stackrel{st}{\longrightarrow}$ refers to stable weak convergence with respect to $\mathcal{G}$ - if not further specified. We write $X^n\stackrel{ucp}{\longrightarrow}X$ for processes $X^n,X$ to express shortly that $\sup_{t\in[0,1]}|X_t^n-X_t|\stackrel{\P}{\rightarrow} 0$. $\delta_{lm}$ is Kronecker's delta, i.e.\,$\delta_{lm}=1$ for $l=m$, $\delta_{lm}=0$ else. For functional convergence of processes we focus on the space $\c D\left(\left[0,1\right]\right)$, the space of c\`{a}dl\`{a}g functions (right-continuous with left limits).\\[.2cm]
Recall the definition of stable weak convergence which is an essential concept in the asymptotic theory for volatility estimation. For a sub-$\sigma$-field $\mathcal{A}\subseteq\mathcal{F}$, a sequence of random variables $(X_n)$ taking values in a Polish space $(E,\mathcal{E})$ converges $\mathcal{A}$-stably, if
$$\lim_{n\rightarrow\infty}\E\left[Zf(X_n)\right]=\int_{\Omega\times E}\mu(d\omega,dx)Z(\omega)f(x)$$
with a probability measure $\mu$ on $(\Omega\times E,\mathcal{A}\otimes\mathcal{E})$ and for all continuous and bounded $f$ and $\mathcal{A}$-measurable bounded random variables $Z$. The definition is equivalent to joint weak convergence of $(Z,X_n)$ for every $\mathcal{A}$-measurable random variable $Z$. 
Thus $\mathcal{F}$-stable weak convergence means \(\lim_{n\rightarrow\infty}\E\left[f(X_n)Z\right]=\E^{\prime}\left[f(X)Z\right]\)
for all bounded continuous $f$ and bounded measurable $Z$, where the limit $X$ of $(X_n)$ is defined on an extended probability space $(\Omega^{\prime},{\mathcal{F}}^{\prime},{\P}^{\prime})$. In our setup, the extended space will be given by the orthogonal product of $(\Omega^0,\mathcal{F},\P^0)$ and an auxiliary space $(\tilde \Omega,\tilde {\mathcal{F}},\tilde{\P})$. We refer to \cite{podvet} for more information on stable convergence.\\[.2cm]
For the multi-dimensional setting the $\operatorname{vec}$-operator and Kronecker products of matrices will be important. For a matrix $A\in\R^{d\times d}$ we write the entries $A_{pq},p,q=1,\dots,d$, and the vector of its entries obtained by stacking its columns on top of one another
\begin{align*}
&\operatorname{vec}(A)=\left(A_{11},A_{21},\ldots,A_{d1},A_{12},A_{22},\ldots,A_{d2},
\ldots,A_{d(d-1)},A_{dd}\right)^{\top}\in{\mathds{R}}^{d^2}.
\end{align*}
The transpose of a matrix $A$ is denoted by $A^{\top}$. For matrix functions in time , for instance the covolatility matrix, we write the entries as $A^{(pq)}_t$.
The Kronecker product $A\otimes B\in\R^{d^2\times d^2}$ for $A,B\in\R^{d\times d}$ is defined as
\[ (A\otimes B)_{d(p-1)+q,d(p'-1)+q'}=A_{pp'}B_{qq'},\quad p,q,p',q'=1,\ldots,d.\]
In the multivariate limit theorems, we account for effects by non-commutativity of matrix multiplication. It will be useful to standardize limit theorems such that the matrix
\begin{align}\label{Z}
{\cal Z}=\Cov(\operatorname{vec}(ZZ^\top)),~\text{ for }Z\sim N(0,E_d)~~\mbox{standard Gaussian},
\end{align}
appears as variance-covariance matrix of the standardized form instead of the identity matrix. This matrix is the sum of the $d^2$-dimensional identity matrix $E_{d^2}$ and the so-called commutation matrix $C_{d,d}$ that maps a vectorized $(d\times d)$ matrix to the $\operatorname{vec}$ of its transpose, i.e.  $C_{d,d}\operatorname{vec}(A)=\operatorname{vec}(A^{\top})$. The matrix ${\cal{Z}}/2$ is idempotent and introduced in \cite{matrixalgebra}, Chapter 11, as the {\textit{symmetrizer matrix}}. Note that in the multi-dimensional experiment under equidistant synchronous non-noisy observations of $X$, the sample realized covolatility matrix \(\widehat{IC}=\sum_{i=1}^n(X_{i/n}-X_{(i-1)/n}) (X_{i/n}-X_{(i-1)/n})^{\top}\) obeys the central limit theorem:
\begin{align}\label{avarwithoutnoise}\hspace*{-1cm}n^{1/2}\,\operatorname{vec}\Big(\widehat{IC}-\int_0^1\Sigma_s\,ds\Big)\stackrel{st}{\longrightarrow}N\Big(0,\int_0^1\big(\Sigma_s\otimes \Sigma_s\big)\,\mathcal{Z}\,ds\Big)\,,\end{align}
where similarly as in our result below the matrix $\mathcal{Z}$ appears as one factor in the asymptotic variance and remains after standardization.
For background information on matrix algebra, especially tensor calculus using the Kronecker product and $\operatorname{vec}$-operator we refer interested readers to \cite{matrixalgebra}. Note the crucial relation between the Kronecker product and the $\operatorname{vec}$-operator $\operatorname{vec}(ABC)=(C^\top \otimes A)\operatorname{vec}(B)$.\\
We employ the notion of empirical scalar products in the fashion of \cite{bibingerreiss}, which is recalled in Definition \ref{scalar} in Section \ref{sec:6}, along with some useful properties.
\subsection{Outline of the main results\label{sec:2.3}}
In the sequel, we present the three major results of this work in Theorem \ref{thm:1}, Theorem \ref{thm:2} and Theorem \ref{thm:3} and concisely discuss the consequences. Theorems \ref{thm:1} and \ref{thm:2} establish functional stable central limit theorems in a general semimartingale setting for the spectral estimators of  \cite{reiss} and \cite{bibingerreiss}. Theorem \ref{thm:3} gives a multivariate limit theorem for the localized method of moment approach of \cite{BHMR}. These methods are briefly explained in a nutshell in Section \ref{sec:3.1}. Note that they attain asymptotic efficiency lower variance bounds in simplified models without drift, with independent volatility and covolatility processes and normally distributed noise.
\begin{theo}\label{thm:1}In the one-dimensional experiment, on Assumption \ref{H1} and Assumption \ref{noise1}, for the adaptive spectral estimator of integrated squared volatility $\IV_{n,t}$, stated in \eqref{spev} below, the functional stable weak convergence 
\begin{align}\label{clt1}n^{1/4}\left(\IV_{n,t}-\int_0^t\sigma_s^2\,ds\right)\stackrel{st}{\longrightarrow}\int_{0}^{t}\sqrt{8\eta\left|\sigma_{s}^{3}\right|}\, dB_{s}\end{align}
applies as $n\rightarrow\infty$ on $\c D\left[0,1\right]$, where $B$
is a Brownian motion defined on an extension of the original probability
space $(\Omega,\c G,(\c G_{t})_{0\leq t\leq1},\P)$, independent of
the original $\sigma$-algebra $\c G$. Moreover, the variance estimator $\widehat{\mathcal{V}}^{{\scriptscriptstyle{{\mathcal{IV}}}}}_{n,t}$ in \eqref{stspev} provides for fixed $0\le t\le 1$ the {\it{feasible}} central limit theorem:
\begin{align}\label{fclt1}\big(\widehat{\mathcal{V}}^{{\scriptscriptstyle{{\mathcal{IV}}}}}_{n,t}\big)^{-1/2}\left(\IV_{n,t}-\int_0^t\sigma_s^2\,ds\right)\stackrel{d}{\longrightarrow}N(0,1)\,.\end{align}
\end{theo}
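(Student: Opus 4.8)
The plan is to establish the functional stable limit theorem \eqref{clt1} first for an \emph{oracle} version $\widetilde{IV}_{n,t}$ of the estimator, in which the optimal spectral weights are built from the true spot volatility instead of the pilot, then to transfer the result to the adaptive estimator $\IV_{n,t}$ of \eqref{spev}, and finally to deduce the feasible statement \eqref{fclt1} from \eqref{clt1} together with a consistency result for the variance estimator $\widehat{\mathcal{V}}^{{\scriptscriptstyle{{\mathcal{IV}}}}}_{n,t}$ in \eqref{stspev}. Recall that the spectral estimator is a sum over equidistant blocks $[(k-1)h_n,kh_n)$ with $kh_n\le t$ of locally parametric method-of-moments estimates of $\sigma^2$, each assembled from the spectral statistics $S_{jk}$ of the increment vector $\Delta^n Y$ (discretized sine transforms at frequency $j$ over block $k$), which behave, up to negligible remainders, like independent centred Gaussian variables with variances combining the local level $\sigma^2_{kh_n}$ with a noise term growing in the squared Fourier frequency. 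Expanding the bias-corrected squares $nS_{jk}^2$ and collecting terms, I would write
\[
n^{1/4}\Big(\widetilde{IV}_{n,t}-\int_0^t\sigma_s^2\,ds\Big)=M_{n,t}+D_{n,t}+R_{n,t},
\]
where $M_{n,t}$ is a sum of $\c G$-martingale increments indexed by blocks, $D_{n,t}$ carries the drift contribution, and $R_{n,t}$ collects the within-block discretization error of $\sigma$, the higher-order (non-Gaussian) noise terms and the edge effects between neighbouring blocks.

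For the leading term $M_{n,t}$ I would invoke the functional stable central limit theorem for triangular arrays of martingale increments of \cite{jacodkey}, in the form used in \cite{jacodmykland}. Writing $\xi_{n,k}$ for the $k$-th block summand and $\Delta_k W$, $\Delta_k N$ for increments over the $k$-th block, the conditions to be checked are: (i) $\sum_{k:\,kh_n\le t}\E[\xi_{n,k}\,|\,\c G_{(k-1)h_n}]\stackrel{ucp}{\longrightarrow}0$; (ii) the predictable quadratic variation $\sum_{k:\,kh_n\le t}\E[\xi_{n,k}^2\,|\,\c G_{(k-1)h_n}]\stackrel{\P}{\rightarrow}\int_0^t 8\eta|\sigma_s^3|\,ds$, uniformly in $t$; (iii) a conditional Lyapunov bound $\sum_{k:\,kh_n\le t}\E[\xi_{n,k}^4\,|\,\c G_{(k-1)h_n}]\stackrel{\P}{\rightarrow}0$, which is where $\E[\epsilon^8]<\infty$ enters; and (iv) the asymptotic-orthogonality (nesting) conditions $\sum_k\E[\xi_{n,k}\Delta_k W\,|\,\c G_{(k-1)h_n}]\stackrel{\P}{\rightarrow}0$ and $\sum_k\E[\xi_{n,k}\Delta_k N\,|\,\c G_{(k-1)h_n}]\stackrel{\P}{\rightarrow}0$ for every bounded $\c G$-martingale $N$ orthogonal to $W$, for which the independence of $\epsilon$ from $\c F$ and a parity argument for the odd Gaussian moments of the spectral noise are used. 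These yield the limit $\int_0^t\sqrt{8\eta|\sigma_s^3|}\,dB_s$, with $B$ a Brownian motion on an extension of $(\Omega,\c G,(\c G_t)_{0\le t\le1},\P)$ independent of $\c G$, and, since (i) and (ii) are obtained locally uniformly in $t$, the convergence holds functionally on $\c D[0,1]$. Step (ii) is the crux of the argument and the point where asymptotic efficiency materializes: it amounts to evaluating, in the Fourier domain, the inverse Fisher information of the local Gaussian spectral experiment and summing the resulting series over frequencies, which produces precisely the efficient constant $8\eta$.

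It then remains to show that $D_{n,t}$ and $R_{n,t}$ are $\KLEINO_{\P}(1)$ uniformly in $t$ after the $n^{1/4}$ scaling. For the within-block fluctuation of $\sigma$ I would treat the two regimes of Assumption \ref{H1} separately: under $(\sigma-1)$ the bound $|\sigma_\tau-\sigma_s|\le L_t|\tau-s|^\alpha$ with $\alpha>1/2$ renders the locally constant approximation of $\sigma^2$ negligible once $h_n$ is chosen of the right order, while under $(\sigma-2)$ one invokes instead the moment estimates \eqref{JM2} and \eqref{JM}. The drift is handled through the decomposition $b=g(b^A,b^B)$, treating the semimartingale part $b^A$ by its locally bounded characteristics and exploiting the $\nu$-H\"older continuity of $b^B$, exactly as flagged before Proposition \ref{propremainder}; without this mild smoothness the drift contribution would fail to be negligible at the optimal rate. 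The higher-order noise and cross terms are bounded by fourth- and eighth-moment estimates on $\epsilon$ together with the near-orthogonality of distinct spectral frequencies and of distinct blocks.

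To pass from $\widetilde{IV}_{n,t}$ to the adaptive $\IV_{n,t}$, note that the two estimators differ only through the data-dependent optimal weights; because the bias-corrected spectral statistics entering each block are centred and the weights always sum to one, the discrepancy couples the uniform pilot error $\|\widehat{\sigma^2}-\sigma^2\|_\infty$ with the intrinsic fluctuation scale of the estimator and is $\KLEINO_{\P}(n^{-1/4})$ uniformly in $t$ provided the pilot — obtained from a coarser pre-averaging/spectral pre-estimation on the same data — is uniformly consistent at a mild rate, yielding $n^{1/4}(\IV_{n,t}-\widetilde{IV}_{n,t})\stackrel{ucp}{\longrightarrow}0$. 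For the feasible statement, the consistency $\widehat{\mathcal{V}}^{{\scriptscriptstyle{{\mathcal{IV}}}}}_{n,t}\stackrel{\P}{\rightarrow}\int_0^t 8\eta|\sigma_s^3|\,ds$ follows from a law of large numbers for the same block statistics — essentially condition (ii) above with the conditional expectations replaced by their empirical counterparts, together with consistency of the pilot — whence \eqref{fclt1} is immediate, since $\c G$-stable convergence to a mixed normal with a $\c G$-measurable, almost surely strictly positive variance (positivity because $\sigma$ never vanishes) together with convergence in probability of the variance estimator to that variance implies convergence of the studentized statistic to $N(0,1)$. I expect the main obstacles to be the exact Fourier/Fisher-information evaluation in step (ii) that yields the efficient constant $8\eta$, and the control of $D_{n,t}$ and of the within-block volatility error at the fast rate $n^{1/4}$, where the H\"older hypotheses on $\sigma$ and on $b^B$ are used in an essential way (Proposition \ref{propremainder}).
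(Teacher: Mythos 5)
Your proposal is correct and follows essentially the same route as the paper: a block-martingale array for an oracle (locally parametric, driftless) version of the estimator handled by Jacod's stable limit theorem with conditions (i)--(iv) matching \eqref{s1}--\eqref{s5}, negligibility of the drift and within-block volatility approximation under Assumption \ref{H1} (the paper's Propositions \ref{propmain} and \ref{propremainder}), transfer to the adaptive estimator via robustness of the weights to pilot estimation error (Proposition \ref{tight1}), and the feasible statement from stability plus consistency of $\widehat{\mathcal{V}}^{{\scriptscriptstyle{{\mathcal{IV}}}}}_{n,t}$. The only substantive detail left implicit in your sketch, which the paper makes explicit, is the quantitative mechanism behind the oracle-to-adaptive step, namely the derivative bound $|w_j'(x)|\lesssim w_j(x)\log^2(n)$ combined with a coarse-grid evaluation of the estimated weights.
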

\begin{remark}The convergence rate in \eqref{clt1} and \eqref{fclt1} is {\it{optimal}}, already in the parametric subexperiment, see \cite{gloter}. $\IV_{n,1}$ is asymptotically mixed normally distributed with random asymptotic variance $\int_0^18\eta\left|\sigma_{s}^{3}\right|\,ds$. This asymptotic variance coincides with the lower bound derived by \cite{reiss} in the subexperiment with time-varying but deterministic volatility, without drift and with Gaussian error distribution. The spectral estimator of squared integrated volatility is hence {\it{asymptotically efficient}} in this setting. For the general semimartingale experiment the concept of asymptotic efficiency is not developed yet, it is conjectured that the lower bound has analogous structure, see Remark 3.1 of \cite{jacodmykland}. Theorem \ref{thm:1} establishes that the asymptotic variance of the estimator has the same form in the very general framework, what we call {\it{quasi-efficient}}, and stable convergence holds true. The {\it{feasible limit theorem}} \eqref{fclt1} allows to provide confidence intervals and is of pivotal importance for practical capability.\end{remark}
\begin{theo}\label{thm:2}In the multi-dimensional experiment, on Assumption \ref{Hd} and Assumption \ref{noise2}, for the adaptive spectral estimator of integrated covolatility $\ICV_{n,t}^{(p,q)}$, stated in \eqref{specv} below, the functional stable weak convergence 
\begin{align}\label{clt2}n^{1/4}\left(\ICV^{(p,q)}_{n,t}-\int_0^t\Sigma_s^{(pq)}\,ds\right)\stackrel{st}{\longrightarrow}\int_{0}^{t} v_s^{(p,q)}\, dB_{s}\end{align}
applies for $n/n_p\rightarrow \nu_p$ and $n/n_q\rightarrow \nu_q$ with $0<\nu_p,\nu_q<\infty$, as $n\rightarrow\infty$ on $\c D\left[0,1\right]$, where $B$
is a Brownian motion defined on an extension of the original probability
space $(\Omega,\c G,(\c G_{t})_{0\leq t\leq1},\P)$, independent of
the original $\sigma$-algebra $\c G$. The asymptotic variance process is given by
\begin{align}\notag \big(v_s^{(p,q)}\big)^2&=2\left({(F_p^{-1})}^{\prime}(s){(F_q^{-1})}^{\prime}(s)\nu_p\nu_q(A_s^2-B_s)B_s\right)^{1/2}\\
&\quad\quad \times \label{avarspecv}\big(\sqrt{A_s+\sqrt{A_s^2-B_s}}-\operatorname{sgn}(A_s^2-B_s)\sqrt{A_s-\sqrt{A_s^2-B_s}}\big)\,,\end{align}
with the terms
$$A_s=\Sigma_s^{(pp)}\frac{{(F_q^{-1})}^{\prime}(s)\nu_q}{{(F_p^{-1})}^{\prime}(s)\nu_p}+\Sigma_s^{(qq)}\frac{{(F_p^{-1})}^{\prime}(s)\nu_p}{{(F_q^{-1})}^{\prime}(s)\nu_q}\,,$$
$$B_s=4\left(\Sigma_s^{(pp)}\Sigma_s^{(qq)}+\big(\Sigma_s^{(pq)}\big)^2\right)\,.$$ 
$\operatorname{sgn}$ denotes the sign taking values in $\{-1,+1\}$ and ensuring that the value of $\big(v_s^{(p,q)}\big)^2$ is always a positive real number.\\
Moreover, the variance estimator $\widehat{\mathcal{V}}^{{\scriptscriptstyle{{\mathcal{ICV}^{(p,q)}}}}}_{n,t}$ in \eqref{stspecv} provides for fixed $0\le t\le 1$ the {\it{feasible}} central limit theorem:
\begin{align}\label{fclt2}\big(\widehat{\mathcal{V}}^{{\scriptscriptstyle{{\mathcal{ICV}}^{(p,q)}}}}_{n,t}\big)^{-1/2}\left(\ICV^{(p,q)}_{n,t}-\int_0^t\Sigma_s^{(pq)}\,ds\right)\stackrel{d}{\longrightarrow}N(0,1)\,.\end{align}
\end{theo}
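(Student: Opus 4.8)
\noindent\emph{Proof strategy.} The plan is to run, in the bivariate asynchronous setting, the block-wise spectral scheme behind Theorem \ref{thm:1}, with the scalar weight $8\eta|\sigma_s^3|$ replaced by the entry of the inverse local Fisher information of the bivariate noisy Gaussian experiment that corresponds to the off-diagonal parameter $\Sigma_s^{(pq)}$; the factors $(F_p^{-1})'(s),(F_q^{-1})'(s),\nu_p,\nu_q$ in \eqref{avarspecv} will appear precisely as the deformation of that information matrix induced by the quantile sampling $t_i^{(l)}=F_l^{-1}(i/n_l)$. First I would localize: by the stopping-time reduction of \cite{jacodmykland} it suffices to prove \eqref{clt2} when the local bounds in Assumption \ref{Hd} and Assumption \ref{noise2} are replaced by global constants, stable convergence being preserved under such a reduction. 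I would then partition $[0,1]$ into $\asymp h_n^{-1}$ blocks of length $h_n$, with $h_n\to0$ chosen as in \cite{bibingerreiss} so that each block still carries many spectral frequencies while the within-block variation of $\Sigma$ — controlled via \eqref{JM}-type estimates under ($\Sigma-2$), or directly from the H\"older bound under ($\Sigma-1$) — stays negligible after the $n^{1/4}$-scaling, and recall from \cite{bibingerreiss} that $\ICV^{(p,q)}_{n,t}$ of \eqref{specv} is a sum, over blocks $k\le\lfloor t/h_n\rfloor$ and spectral frequencies $j$, of empirical scalar products (Definition \ref{scalar}) of $\Delta^nY^{(p)}$ and $\Delta^nY^{(q)}$ against the sine weight functions, with data-driven optimal weights built from a pilot estimate of $\Sigma_s$, of $\eta_p^2,\eta_q^2$ and from the known $F_p',F_q',\nu_p,\nu_q$.

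Next I would decompose the $n^{1/4}$-scaled error in \eqref{clt2} into a leading term plus remainders. The leading term is a sum over blocks of $\c G_{kh_n}$-measurable summands $\xi_{n,k}$ which, up to negligible corrections, are martingale differences relative to the block filtration and collect the leading interaction of the microstructure noise with the signal and with itself — note that the Brownian fluctuation of the block-wise realized covolatility is only $\mathcal{O}_{\P}(n^{-1/2})$, hence negligible at the $n^{1/4}$-scale by \eqref{avarwithoutnoise}. The remainders comprise: the drift contribution, treated exactly as by Proposition \ref{propremainder} via $b=g(b^A,b^B)$ with $b^B$ H\"older; the within-block variation of $\Sigma$, bounded through the choice of $h_n$; the error of replacing $\eta_p^2,\eta_q^2$ and $\Sigma_s$ in the weights by their pilot estimates, reduced to uniform consistency of the pilot times the Lipschitz dependence of the optimal weights on $(\Sigma_s,\eta_p^2,\eta_q^2)$ — which forces the pilot rate to beat $n^{-1/8}$ so that, after the self-averaging over the $\asymp h_n^{-1}$ blocks, the $n^{1/4}$-scaled effect vanishes; the truncation error from the finite spectral cut-off; and the interpolation error from sampling $X^{(p)},X^{(q)}$ on the two mismatched grids. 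This last term is where the quantile transforms are indispensable: they turn the grid mismatch into a smooth time change with derivatives $(F_p^{-1})',(F_q^{-1})'$ that merely rescales the local information matrix and thereby enters \eqref{avarspecv} instead of producing a genuine error.

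For the leading term I would apply the stable limit theorem for triangular arrays of \cite{jacodkey}, in the form used in \cite{fukasawa} and \cite{hy3}: one checks, for every $t$, (a) $\sum_k\E[\xi_{n,k}^2\mid\c G_{(k-1)h_n}]\stackrel{\P}{\rightarrow}\int_0^t(v_s^{(p,q)})^2\,ds$; (b) $\sum_k\E[\xi_{n,k}^4\mid\c G_{(k-1)h_n}]\stackrel{\P}{\rightarrow}0$ (conditional Lyapunov), which uses $\E[(\epsilon^{(l)})^8]<\infty$; and (c) $\sum_k\E[\xi_{n,k}(W^{(m)}_{kh_n}-W^{(m)}_{(k-1)h_n})\mid\c G_{(k-1)h_n}]\stackrel{\P}{\rightarrow}0$ for every $m$, together with the analogous statement with $W$ replaced by an arbitrary bounded $(\c F_t)$-martingale orthogonal to $W$ — which yields both $\c G$-stability and the independence of the limiting Brownian motion $B$ from $\c G$. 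Step (a) is the analytic core: on a block around $s$ the vector of spectral statistics of $(Y^{(p)},Y^{(q)})$ is asymptotically a centred Gaussian vector whose covariance, written in the eigenbasis of the $F'$-rescaled matrix $\Sigma_s$, decouples into two independent scalar noisy-volatility experiments, so the optimal information-bound variance for $\Sigma_s^{(pq)}$ follows — via the $\operatorname{vec}$/Kronecker identities and the diagonalisation of the $2\times2$ local covariance matrix, in the spirit of the calculus around \eqref{Z} and in \cite{matrixalgebra} — from the two eigenvalue contributions $\sqrt{A_s+\sqrt{A_s^2-B_s}}$ and $\sqrt{A_s-\sqrt{A_s^2-B_s}}$ combined with the $\operatorname{sgn}$ correction, which is exactly \eqref{avarspecv}. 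Functional convergence on $\c D[0,1]$ then follows by verifying a standard tightness criterion for the partial-sum process from (b) and the remainder bounds. I expect Step (a) together with the plug-in/asynchronicity interplay — tracking the bivariate spectral covariance through the mismatched grids, showing the pilot-induced weighting error is negligible after summation over $\asymp h_n^{-1}$ blocks, and at the same time keeping the within-block bias below $n^{-1/4}$ — to be the principal difficulty, all while pinning down the precise matrix-algebraic form \eqref{avarspecv}.

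Finally, for \eqref{fclt2} I would show that $\widehat{\mathcal{V}}^{{\scriptscriptstyle{{\mathcal{ICV}}^{(p,q)}}}}_{n,t}$ of \eqref{stspecv}, obtained by plugging the same pilot estimates (and the known $F_p',F_q',\nu_p,\nu_q$) into the continuous map $(\Sigma_s,F',\nu)\mapsto(v_s^{(p,q)})^2$ and integrating over $[0,t]$, satisfies $\widehat{\mathcal{V}}^{{\scriptscriptstyle{{\mathcal{ICV}}^{(p,q)}}}}_{n,t}\stackrel{\P}{\rightarrow}\int_0^t(v_s^{(p,q)})^2\,ds$; this uses uniform consistency of the pilot, continuity of the map, the a.s.\ bound $\underline\Sigma E_d\le\Sigma_s$ (which keeps $(v_s^{(p,q)})^2$ bounded away from $0$ and the map, including its $\operatorname{sgn}$ branch, well-defined), and dominated convergence. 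Combining this with \eqref{clt2} and the elementary fact — see \cite{podvet} — that a $\c G$-stably convergent sequence divided by the square root of a consistent estimator of its limiting $\c G$-conditional variance converges $\c G$-stably, hence in law, to $N(0,1)$, gives \eqref{fclt2}.
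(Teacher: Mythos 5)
Your overall scaffolding (localization, block decomposition, Jacod's conditions (a)--(c), negligibility of drift and of the plug-in weights via a Lipschitz/derivative bound, consistency of \eqref{stspecv} plus stability for the feasible CLT) matches the paper's strategy, and your treatment of asynchronicity as a smooth time change entering through local observation densities is in the spirit of Lemma \ref{key}. However, your analytic core (step (a)) contains a genuine error: you derive the limit variance as ``the entry of the inverse local Fisher information of the bivariate noisy Gaussian experiment corresponding to $\Sigma_s^{(pq)}$'', obtained by diagonalising the local covariance into two independent scalar experiments. That object is the efficiency bound attained by the local method of moments \eqref{lmm} with matrix weights \eqref{weightsmv} (Theorem \ref{thm:3}, variance \eqref{acov}); it does \emph{not} coincide with \eqref{avarspecv} except when $\Sigma_s^{(pq)}=0$, as the paper's remark after Theorem \ref{thm:2} stresses. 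The estimator \eqref{specv} is a scalar-weighted sum of the cross statistics $S_{jk}^{(p)}S_{jk}^{(q)}$ alone, so the correct route is to compute the conditional variance of $S_{jk}^{(p)}S_{jk}^{(q)}$, which equals $\Sigma^{(pp)}\Sigma^{(qq)}+(\Sigma^{(pq)})^2$ plus signal-noise and noise-noise terms as in \eqref{eq:-4}, optimize the scalar weights under $\sum_j w_{jk}^{p,q}=1$, and evaluate the limiting Riemann sum $\sum_k h_n(\sqrt{n}h_n)I_k^{-1}$ as an integral of the reciprocal of a quartic in the frequency variable; the $\sqrt{A_s\pm\sqrt{A_s^2-B_s}}$ structure and the $\operatorname{sgn}$ correction come from the closed-form evaluation of that integral (carried out in the cited earlier work), not from an eigenbasis/information-bound argument. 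Following your plan as written you would land on the wrong (smaller) variance.

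Related and equally important: your claim that ``the Brownian fluctuation of the block-wise realized covolatility is only $\mathcal{O}_{\P}(n^{-1/2})$, hence negligible at the $n^{1/4}$-scale'' is false for this estimator. The pure signal-signal fluctuation of the spectral statistics contributes at first order, because with noise only $\asymp\sqrt{n}h_n$ frequencies carry non-trivial weight, so the squared signal terms enter the per-block variance on equal footing with the signal-noise and noise-noise terms; this is exactly the $B_s/4=\Sigma_s^{(pp)}\Sigma_s^{(qq)}+(\Sigma_s^{(pq)})^2$ part of \eqref{avarspecv} (and, in $d=1$, the $2\sigma^4$ term without which one would not obtain $8\eta|\sigma|^3$). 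Dropping it, as your leading-term description does, again yields an incorrect limit. Finally, two minor points: the synchronization step needs an explicit argument (the paper's Lemma \ref{key}, expanding $\Phi_{jk}$ between mismatched grid points and using the weight bounds) rather than only the remark that the quantile transforms ``merely rescale'' the information; and \eqref{stspecv} is the sum $\sum_k h_n^2(\hat I_k^{(p,q)})^{-1}$ rather than a plug-in of the closed-form $(v_s^{(p,q)})^2$, though these are asymptotically equivalent so your feasibility argument survives once the leading-term variance is corrected.
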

\begin{remark}The bivariate extension of the spectral method outperforms by its local adaptivity and Fourier domain smoothing previous approaches for integrated covolatility estimation in most cases, see \cite{bibingerreiss} for a detailed discussion and survey on the different methods. Yet, it attains the multi-dimensional variance lower bound in the submodel for estimating the integrated covolatility $\int_0^1\Sigma_s^{(pq)}\,ds$ only in case of zero correlation. On the other hand, the estimator already achieves a high efficiency and since it does not involve Fisher information weight matrices, it is less computationally costly than the efficient local method of moments approach. The general form of the asymptotic variance given in \eqref{avarspecv} looks a bit tedious. In case that $\Sigma_t^{(12)}= 0$ and for equal volatilities $\Sigma^{(11)}_t=\Sigma^{(22)}_t=\sigma_t$, it simplifies to $\int_0^t 4\eta|\sigma_s^3|\,ds$ which is efficient for this setup. By its rescaled version in \eqref{fclt2} allowing for confidence bounds, the estimator is of high practical value.\end{remark}
\begin{theo}\label{thm:3}In the multi-dimensional experiment, on Assumption \ref{Hd} and Assumption \ref{noise2}, for the local method of moments estimator of the vectorized integrated covolatility matrix $\LMM_{n,t}$, stated in \eqref{lmm} below, the functional stable weak convergence 
\begin{align}
\hspace*{-0.1cm}n^{1/4}\hspace*{-0.1cm}\left(\hspace*{-0.05cm}\LMM_{n,t}-\operatorname{vec}\hspace*{-0.05cm}\Big(\int_{0}^{t}\Sigma_{s}\,ds\Big)\hspace*{-0.05cm}\right)\hspace*{-0.05cm}\stackrel{st}{\longrightarrow}\hspace*{-0.1cm}\int_{0}^{t}\hspace*{-0.1cm}\big(\Sigma_{s}^{\frac{1}{2}}\otimes\big(\Sigma_{s}^{\mathcal{H}}\big)^{\frac{1}{4}}\big)\mathcal{Z}dB_{s}+\hspace*{-0.1cm}\int_{0}^{t}\hspace*{-0.1cm}\big(\big(\Sigma_{s}^{\mathcal{H}}\big)^{\frac14}\otimes\Sigma_{s}^{\frac12}\big)\mathcal{Z}dB_{s}^{\bot}\label{clt3}
\end{align}
applies, with ${\cal H}(t)=\operatorname{diag}(\eta_p\nu_p^{1/2}F_p'(t)^{-1/2})_{p}\in\R^{d\times d}$ and $\big(\Sigma^{\cal H}\big)^{1/4}$ the matrix square root of \(\big(\Sigma^{\cal H}\big)^{1/2}:={\cal H}({\cal H}^{-1}\Sigma {\cal H}^{-1})^{1/2}{\cal H}\), as $n\rightarrow\infty$ and $n/n_p\to \nu_p$ for $p=1,\ldots,d$, on $\c D\left[0,1\right]$, where $\mathcal{Z}$ is the matrix defined in \eqref{Z} and $B$ and $B^{\bot}$ are two independent $d^2$-dimensional Brownian motions, both defined on an extension of the original probability space $(\Omega,\c G,(\c G_{t})_{0\leq t\leq1},\P)$, independent of
the original $\sigma$-algebra $\c G$. For fixed time $t=1$ in \eqref{clt3}, the point-wise marginal central limit theorem reads
\begin{align}n^{1/4}\left(\LMM_{n,1}-\operatorname{vec}\Big(\int_0^1\Sigma_s\,ds\Big)\right)\stackrel{st}{\longrightarrow}
MN\left(0,\mathbf{I}^{-1}{\cal Z}\right)\,,\end{align}
where $MN$ means mixed normal distribution, with the asymptotic variance-covariance matrix 
\begin{align}\label{acov} \mathbf{I}^{-1}=2
\int_0^1(\Sigma_s\otimes \big(\Sigma_s^{\cal H}\big)^{1/2} + \big(\Sigma_s^{\cal H}\big)^{1/2} \otimes \Sigma_s)\,ds\,.
\end{align}
Moreover, the variance-covariance matrix estimator ${\bf{I}}_{n,t}^{-1}$ in \eqref{stlmm} provides for fixed $0\le t\le 1$ the {\it{feasible}} central limit theorem:
\begin{align}\label{fclt3}{\bf{I}}_{n,t}^{1/2}\left(\LMM_{n,t}-\operatorname{vec}\Big(\int_0^t\Sigma_s\,ds\Big)\right)\stackrel{d}{\longrightarrow}N\left(0,{\cal Z}\right)\,.\end{align}
\end{theo}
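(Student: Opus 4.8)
The plan is to follow the by-now standard route for stable limit theorems of semimartingale functionals in the spirit of \cite{jacodkey} (as also employed by \cite{fukasawa} and \cite{hy3}), adapted to the blockwise spectral structure of $\LMM_{n,t}$: localize, freeze the volatility on each spectral block, reduce to a family of locally parametric Gaussian covariance-estimation experiments whose optimally weighted moment statistics are conditionally independent across blocks, and invoke a functional stable CLT for conditionally independent triangular arrays, with the $\operatorname{vec}$-operator, Kronecker products and the commutation matrix turning scalar-type arguments into their multivariate form. \emph{Localization:} since all processes in Assumption \ref{Hd} and Assumption \ref{noise2} are either locally bounded or confined to Hölder balls, a routine stopping-time argument reduces \eqref{clt3}--\eqref{fclt3} to the case where $\sigma$, $\Sigma^{-1}$, the drift, the characteristics of $\sigma$ (under $(\Sigma-2)$) and the sampling densities $F_l'$, $(F_l^{-1})'$ are globally bounded and $\Sigma_s\ge\underline\Sigma E_d$ holds everywhere; stable convergence survives this reduction.

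\emph{Reduction to an oracle with frozen local Gaussian signal.} Partition $[0,t]$ into spectral blocks of length $h_n$ (with $h_n\to0$, $nh_n\to\infty$). On each block I successively replace (i) the adaptive Fisher-information weight matrices built from the pilot estimator of the local $\Sigma_s$ by their oracle versions evaluated at the true local value, the error being controlled through the pilot rate; (ii) the drift, whose net contribution is $\KLEINO_{\P}(n^{-1/4})$ by the $\nu$-Hölder smoothness of $b^B$ together with the semimartingale structure of $b^A$ (this is exactly where the hypothesis on $b$ is used, cf. Proposition \ref{propremainder}); (iii) the within-block increments of $X$ by increments of a Brownian motion with frozen local covolatility, bounding the replacement error via the estimate analogous to \eqref{JM}. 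After these steps, $n^{1/4}(\LMM_{n,t}-\operatorname{vec}(\int_0^t\Sigma_s\,ds))$ equals, up to $\KLEINO_{\P}(1)$ uniformly in $t$, a sum $\sum_k\zeta_k^n(t)$ of block contributions that are, conditionally on $\c F$, independent and each a smooth (essentially quadratic) functional of the Gaussian vector of spectral statistics of the locally synchronised noisy observations.

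\emph{Functional stable CLT and identification of the variance --- the crux.} I then apply the stable CLT for sums of $\c F$-conditionally independent arrays, checking: negligibility of the conditional bias $\sum_k\E[\zeta_k^n(t)\mid\c G_{(k-1)h_n}]$ (the method-of-moments weighting is unbiased to the required order, with the residual discretisation bias and the $\mathcal{O}(h_n^{1/2})$ within-block variation of $\Sigma$ absorbed using \eqref{JM2}); convergence of the conditional covariances $\sum_k\E[\zeta_k^n(t)\zeta_k^n(t)^\top\mid\c G_{(k-1)h_n}]\stackrel{\P}{\rightarrow}\mathbf I^{-1}(t)\c Z$, with $\mathbf I^{-1}(t)$ the right-hand side of \eqref{acov} restricted to $[0,t]$; a Lyapunov bound $\sum_k\E[\|\zeta_k^n(t)\|^4]\to0$, which follows from $\E[(\epsilon^{(l)})^8]<\infty$ and the $n^{-1/4}$ scaling; and the two orthogonality conditions $\sum_k\E[\zeta_k^n(t)\,\Delta_k^n W^\top]\stackrel{\P}{\rightarrow}0$ and $\sum_k\E[\zeta_k^n(t)\,\Delta_k^n N]\stackrel{\P}{\rightarrow}0$ for every bounded $(\c F_t)$-martingale $N$ orthogonal to $W$ (these hold because $\zeta_k^n(t)$ is, to leading order, a centred even functional of the block noise and Brownian increments and hence conditionally uncorrelated with those increments), which pin the limit down as $\c G$-conditionally Gaussian and independent of $\c G$. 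The decisive algebraic point behind the covariance limit is to identify the inverse of the aggregated local Fisher information of the spectral covariance experiment --- incorporating the diagonal noise-level matrix $\c H(t)=\operatorname{diag}(\eta_p\nu_p^{1/2}F_p'(t)^{-1/2})$ generated by the non-synchronous sampling --- as $2(\Sigma\otimes(\Sigma^{\c H})^{1/2}+(\Sigma^{\c H})^{1/2}\otimes\Sigma)$, while the covariance of the vectorised quadratic spectral statistics carries the factor $\c Z=E_{d^2}+C_{d,d}$; this uses $\operatorname{vec}(ABC)=(C^\top\otimes A)\operatorname{vec}(B)$, the intertwining relation $C_{d,d}(A\otimes B)=(B\otimes A)C_{d,d}$, and the fact that $\c Z/2$ is the idempotent symmetrizer. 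Non-commutativity of matrix multiplication is the source of the additive two-term structure in \eqref{acov} and of the splitting of the limit in \eqref{clt3} into an integral against $B$ and an independent integral against $B^\bot$; this bookkeeping, together with the propagation of the adaptive weights through the non-synchronous design, is the main obstacle of the proof.

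\emph{Tightness and the feasible CLT.} Tightness of $t\mapsto n^{1/4}(\LMM_{n,t}-\operatorname{vec}(\int_0^t\Sigma_s\,ds))$ in $\c D[0,1]$ follows from a moment bound on its increments over $[s,t]$, exploiting the conditional independence of the blocks; together with the finite-dimensional convergence from the previous step this yields the functional statement \eqref{clt3}, and $t=1$ gives the mixed-normal marginal with covariance $\mathbf I^{-1}\c Z$. For \eqref{fclt3} it remains to verify $\mathbf I_{n,t}^{-1}\stackrel{\P}{\rightarrow}\mathbf I^{-1}(t)$ --- the plug-in estimator being a Riemann-type sum of pilot estimators of $\Sigma_s$ and of the known $\eta_l,\nu_l,F_l'$, consistent after the localization above --- and to conclude by the Slutsky-type property of stable convergence: $\c G$-stable convergence of the statistic jointly with $\mathbf I_{n,t}^{-1}\stackrel{\P}{\rightarrow}\mathbf I^{-1}(t)$ implies $\mathbf I_{n,t}^{1/2}(\LMM_{n,t}-\operatorname{vec}(\int_0^t\Sigma_s\,ds))\stackrel{d}{\longrightarrow}N(0,\c Z)$, the random normalisation cancelling the random variance of the mixed-normal limit.
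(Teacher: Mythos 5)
Your outline reproduces the architecture of the paper's proof --- localization, reduction to a drift-free oracle with blockwise frozen volatility, verification of Jacod's conditions \eqref{s1}--\eqref{s5}, negligibility of the adaptive plug-in, and the stable-convergence/Slutsky argument for \eqref{fclt3} --- but two steps that carry real weight in the multivariate statement are missing or merely asserted. First, non-synchronicity: you speak of ``locally synchronised noisy observations'', yet none of your reduction steps (i)--(iii) synchronises anything. The vector $S_{jk}$ in \eqref{specmv} mixes components sampled on different grids, so the conditional moments of $S_{jk}S_{jk}^{\top}$ involve cross-terms of signal increments over non-nested intervals; the paper needs a dedicated argument here, namely Lemma \ref{key} (based on the expansion \eqref{Phiexp} and the weight bound \eqref{orderweights}), which shows the signal part can be replaced by a synchronous reference scheme at cost $\KLEINO_{\P}(n^{-1/4})$, together with the local quadratic-variation-of-time estimates \eqref{qvt} and \eqref{h}, which are what turn the sampling densities into the local noise-level matrices ${\bf H}_k^n$ and hence into $\mathcal{H}(t)$. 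Without such an argument, your claim that the design enters only through $\mathcal{H}$ is an assumption, not a conclusion.

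Second, the covariance identification. The identity $\mathbf{I}^{-1}=2\int(\Sigma_s\otimes(\Sigma_s^{\mathcal H})^{1/2}+(\Sigma_s^{\mathcal H})^{1/2}\otimes\Sigma_s)\,ds$ does not follow from the $\operatorname{vec}$/commutation/symmetrizer identities you invoke: those only relate the square-root form in \eqref{clt3} to \eqref{acov} (this is exactly the short computation the paper gives). What actually produces $(\Sigma^{\mathcal H})^{1/2}=\mathcal{H}(\mathcal{H}^{-1}\Sigma\mathcal{H}^{-1})^{1/2}\mathcal{H}$ is the frequency aggregation of the Fisher informations \eqref{weightsmv}: one must evaluate the limit of $(\sqrt{n}h_n)^{-1}\sum_j\big(\Sigma_{(k-1)h_n}+[\varphi_{jk},\varphi_{jk}]{\bf H}_k^n\big)^{-\otimes 2}$ and invert it, the matrix analogue of the arctan computation in the proof of Proposition \ref{propmain}; the paper does this via a bin-wise orthogonal transformation diagonalising $\mathcal{H}^{-1}\Sigma\mathcal{H}^{-1}$, imported from Corollary 4.3 and Lemma C.1 of \cite{BHMR}. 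Your proposal asserts the answer without this computation. Relatedly, your reduction to ``locally parametric Gaussian covariance-estimation experiments'' is not available: under Assumption \ref{noise2} the noise only has eight moments, so the spectral statistics are not Gaussian and the conditional variances of the diagonal entries of $S_{jk}S_{jk}^{\top}$ contain fourth-moment terms proportional to $\E[\epsilon^4]-3\eta^4$; the paper shows explicitly (the remainder $R_n$ in the one-dimensional variance computation, extended to the diagonal elements in the multivariate part) that these do not affect the first-order variance. These are gaps in an otherwise correctly oriented plan rather than a wrong route.
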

\begin{remark}The local method of moments attains the lower asymptotic variance bound derived in \cite{BHMR} for a nonparametric experiment with deterministic covolatility matrix, without drift and with Gaussian error distribution. Thus, the local method of moments is {\it{asymptotically efficient}} in this subexperiment. In the general setup it is {\it{quasi-efficient}}.\\
The asymptotic variance of estimating integrated squared volatility decreases as we can benefit from observing correlated components. In the multi-dimensional observation model the minimum asymptotic variance can become much smaller than the bound in \eqref{clt1} for $d=1$. In an idealized parametric model with $\sigma>0$, the variance can be reduced up to $(8/\sqrt{d})\eta\sigma^3$ in comparison to the one-dimensional lower bound $8\eta\sigma^3$, see \cite{BHMR} for a deeper discussion of the lower bound. In view of the complex geometry of the general multi-dimensional parameter space, expression \eqref{acov} provides a neat description of the asymptotic variance bound.\end{remark}

\section{Spectral estimation of integrated volatility and the integrated covolatility matrix\label{sec:3}}
\subsection{Elements of spectral estimation\label{sec:3.1}}
We shall concisely review the building blocks of spectral estimation.
For simplicity we start with the one-dimensional framework, $d=1$.
We partition the time interval $[0,1]$ into equidistant bins $[(k-1)h_{n},kh_{n}]$,
$k=1,\ldots,h_{n}^{-1}\in\N$, $h_{n}\rightarrow0$ as $n\rightarrow\infty$,
such that $nh_{n}$ is the average number of observations per bin. Consider a statistical experiment in which we approximate $\sigma_t$ by a locally constant function. Precisely, on each bin $[(k-1)h_n,kh_n]$ a piecewise constant squared volatility $\sigma^2_{(k-1)h_n}$ is considered. In this experiment the task is then
to estimate on every block $\int_{(k-1)h_n}^{kh_n}\sigma^2_s\,ds$ by $h_n\hat{\sigma}^2_{(k-1)h_n}$, solving locally parametric estimation problems.
For this purpose \cite{reiss} motivated to use \textit{{spectral
statistics}} 
\begin{align}
S_{jk}=\sum_{i=1}^{n}\Delta_{i}^{n}Y\Phi_{jk}\left(\frac{i}{n}\right)~,j=1,\ldots,\lfloor nh_{n}\rfloor-1,k=1,\ldots,h_{n}^{-1},\label{spec}
\end{align}
which are discrete analogues of expressions obtained from diagonalizing
the covariance operator of observations in an equivalent white noise
experiment. Here, 
\begin{align}
\Phi_{j}(t)=\sqrt{\frac{2}{h_{n}}}\sin\left(j\pi{h_{n}^{-1}}t\right)\I_{\left[0,h_{n}\right]}(t)\,,\,\Phi_{jk}\left(t\right)=\Phi_{j}(t-(k-1)h_{n})
,\quad j\geq1,\,0\leq t\leq1\,,\label{phi}
\end{align}
\begin{align}
\varphi_{j}\left(t\right)=2n\sqrt{\frac{2}{h_{n}}}\sin\left(\frac{j\pi}{2 nh_{n} }\right)\cos\left(j\pi{h_{n}^{-1}}t\right)\I_{\left[0,h_{n}\right]}\left(t\right)\,,\,\varphi_{jk}\left(t\right)=\varphi_{j}(t-(k-1)h_{n})
\,,\label{varphi}
\end{align}
are systems of trigonometric functions orthogonal with respect to empirical
scalar products $\left\langle \cdot,\cdot\right\rangle _{n}$ and
$\left[\cdot,\cdot\right]_{n}$, respectively, see Section \ref{sec:6.1}. Efficient estimators $\hat{\sigma}_{(k-1)h_{n}}^{2}$
for $\sigma_{(k-1)h_{n}}^{2}$ are then constructed by weighted linear
combinations of bias-corrected squared spectral statistics over different frequencies.
Consequently, in this experiment we estimate $\int_{(k-1)h_{n}}^{kh_{n}}\sigma_{s}^{2}\,ds$
by $h_{n}\hat{\sigma}_{(k-1)h_{n}}^{2}$.

The spectral statistics are the principal elements of the considered
estimation techniques. They are related to the pre-averages of \cite{JLMPV}
which have been designed for our one-dimensional estimation problem,
as well. A main difference is that we keep the bins fixed which makes
the construction of the spectral approach simple. Bin-wise the spectral
estimation profits from an advanced smoothing method in the frequency
domain, i.e.\,using the weight function of a discrete sine transformation.
The spectral statistics hence \textit{de-correlate} the observations
and form their bin-wise principal components. The methodology can
be viewed also as localizing on bins the estimator by \cite{corsi}.
\cite{reiss} showed that this leads to a semiparametrically efficient
estimation approach of squared integrated volatility in a nonparametric
setup with deterministic volatility, without drift and normally distributed
noise. The bin-width is chosen as $h_{n}\asymp n^{-1/2}\log{n}$ to
attain the optimal convergence rates and for the results in Section
\ref{sec:2.1}. This becomes clear in the proofs in Section \ref{sec:6}.
The log-factor plays a role in the convergence of the sum of variances
over different frequencies. The leading asymptotic order $n^{-1/2}$
for the bin-width is analogous to the pre-average and kernel bandwidths,
cf.\,\citet{JLMPV} and \citet{bn2}, and balances the discretization
error which increases with increasing $h_{n}$ and the error due to
noise which decreases as $h_{n}$ increases. Let us point out that
the basis functions \eqref{phi} and \eqref{varphi} are slightly
scaled versions of the respective basis functions in \cite{bibingerreiss}
and \cite{BHMR} for a more convenient exposition. 

\subsection{The spectral estimator of integrated volatility\label{sec:3.2}}
In the sequel, we use the empirical norm of the functions \eqref{varphi}, see Definition \ref{scalar} and \eqref{or2}.
Locally parametric estimates for the squared volatility $\hat{\sigma}_{(k-1)h_{n}}^{2}$
are obtained by weighted linear combinations with weights $w_{jk}$
of bias-corrected squared spectral statistics: 
\begin{align}
\hat{\sigma}_{(k-1)h_{n}}^{2}=\sum_{j=1}^{\lfloor nh_{n}\rfloor-1}w_{jk}\Big(S_{jk}^{2}-[\varphi_{jk},\varphi_{jk}]_{n}\,\frac{\eta^{2}}{n}\Big)\,.\label{locvolest}
\end{align}
Asymptotically, we consider infinitely many frequencies, but a natural
maximal frequency is $\lfloor nh_{n}\rfloor-1$\footnote{The proofs reveal that even a largest cut-off frequency $J_{n}\asymp\log n$
suffices and higher frequencies are negligible.}. The correction for the bias due to noise incorporates the noise
level $\eta$ which is in general unknown -- but can be consistently
estimated from the data with $n^{1/2}$ convergence rate, e.\,g.\,by\\
 $\hat{\eta}^{2}=(2n)^{-1}\sum_{i=1}^{n}(\Delta_{i}^{n}Y)^{2}$, see
\citet{zhangmykland} for an asymptotic analysis of this estimator.
The principle of bias-correcting the squared spectral statistics still
relates to the early estimator by \citet{zhou} for volatility estimation
under microstructure noise.\\
The estimator of the integrated squared volatility $\int_{0}^{t}\sigma_{s}^{2}\,ds$
is constructed as Riemann sum 
\begin{align}
\sum_{k=1}^{\lfloor th_{n}^{-1}\rfloor}h_{n}\hat{\sigma}_{(k-1)h_{n}}^{2}=\sum_{k=1}^{\lfloor th_{n}^{-1}\rfloor}h_{n}\sum_{j=1}^{\lfloor nh_{n}\rfloor-1}w_{jk}\Big(S_{jk}^{2}-[\varphi_{jk},\varphi_{jk}]_{n}\,\frac{\eta^{2}}{n}\Big)\,,\label{eq:Riemann_sum}
\end{align}
such that the estimator at $t=1$ becomes simply the average of local
estimates in the case of equi\-spaced bins. Set $I_{jk}=\big(\var\big(S_{jk}^{2}\big)\big)^{-1}$
and $I_{k}=\sum_{j=1}^{\lfloor nh_{n}\rfloor-1}I_{jk}$. The variance
of the above estimator becomes minimal and equal to $\sum_{k=1}^{\lfloor th_{n}^{-1}\rfloor}h_n^2I_{k}^{-1}$
for the oracle weights 
\begin{align}
w_{jk}=I_{k}^{-1}I_{jk}=\frac{\left(\sigma_{\left(k-1\right)h_{n}}^{2}+\frac{\eta^{2}}{n}\left[\varphi_{jk},\varphi_{jk}\right]_{n}\right)^{-2}}{\sum_{m=1}^{\lfloor nh_{n}\rfloor-1}\left(\sigma_{\left(k-1\right)h_{n}}^{2}+\frac{\eta^{2}}{n}\left[\varphi_{mk},\varphi_{mk}\right]_{n}\right)^{-2}}\label{weights}
\end{align}
for $k=1,\dots,h_{n}^{-1}$ and $j=1,\dots,\lfloor nh_{n}\rfloor-1$,
when the noise is Gaussian and $\sigma_{(k-1)h_{n}}$ is bin-wise
constant. For general noise distribution it turns out that the first-order
variance is not affected. It is essential to develop an adaptive version
of the estimator, for which we replace the oracle optimal weights
by data-driven estimated optimal weights. Additionally to the estimated
noise variance, a bin-wise consistent estimator of the local volatilities
$\sigma_{(k-1)h_{n}}^{2}$ with some convergence rate suffices. Local
pre-estimates of the volatilities $\sigma_{(k-1)h_{n}}^{2}$ can be
constructed by using the same ansatz as in \eqref{locvolest}, but
involving only $J_{n}\ll\lfloor nh_{n}\rfloor-1$ frequencies and
constant weights $w_{jk}=J_{n}^{-1}$ and then averaging over $2K_{n}+1\asymp n^{1/4}$
bins in a neighborhood of $(k-1)h_{n}$: 
\begin{align}\label{pilot1}\hat\sigma_{(k-1)h_n}^{2,pilot}=(2K_{n}+1)^{-1}\sum_{m=(k-1-K_{n})\vee 1}^{(k-1+K_{n})\wedge h_n^{-1}}\hspace*{-.1cm}J_{n}^{-1}\,\sum_{j=1}^{J_{n}}\Big(S_{jm}^2-[\varphi_{jm},\varphi_{jm}]_n\,\frac{\hat\eta^2}{n}\Big)\,.\end{align}
This estimator attains $n^{1/8}$ as rate of convergence in case of
$\alpha\approx1/2$ under $(\sigma-1)$ or under $(\sigma-2)$ in
Assumption \ref{H1}. The estimated weights are then given by $\hat{w}_{jk}=\hat{I_{k}}^{-1}\hat{I}_{jk}$
where $\hat{I}_{k},\hat{I}_{jk}$ are obtained as above in \eqref{weights} but plugging
in the pre-estimates of local squared volatilities and of the noise
variance. We can define the final fully adaptive spectral estimator
of integrated volatility and the estimator for its variance based
on a two-stage approach: \begin{subequations} 
\begin{align}
\IV_{n,t} & =\sum_{k=1}^{\lfloor th_{n}^{-1}\rfloor}h_{n}\sum_{j=1}^{\lfloor nh_{n}\rfloor-1}\hat{w}_{jk}\Big(S_{jk}^{2}-\frac{\hat{\eta}^{2}}{n}\,[\varphi_{jk},\varphi_{jk}]_{n}\Big)\,,\label{spev}\\
\widehat{\mathcal{V}}_{n,t}^{{\scriptscriptstyle {\mathcal{IV}}}} & =\sum_{k=1}^{\lfloor th_{n}^{-1}\rfloor}h_{n}^{2}\,\hat{I}_{k}^{-1}\,.\label{stspev}
\end{align}
\end{subequations} 

\subsection{The spectral covolatility estimator\label{sec:3.3}}
The spectral covolatility estimator from \citet{bibingerreiss} is
the obvious extension of the one-dimensional estimator using cross-products
of spectral statistics: 
\begin{align}
S_{jk}^{(p)}=\sum_{i=1}^{n_{p}}\Delta_{i}^{n}Y^{(p)}\Phi_{jk}\left(\frac{t_{i}^{(p)}+t_{i-1}^{(p)}}{2}\right)~\,,\,j\ge1,p=1,\ldots,d,k=1,\ldots,h_{n}^{-1}.\label{specavg}
\end{align}
The basis functions $(\Phi_{jk})$ are defined as in \eqref{phi}.
Under non-synchronous observations, we can use instead of \eqref{varphi}
the simpler expression $\varphi_{jk}=\Phi_{jk}^{\prime}$ \footnote{This meets the original idea by \citet{reiss} for continuous-time
observations to use orthogonal systems of functions and their derivatives.
While in the case of regular observations on the grid $i/n,i=0,\ldots,n$,
we can slightly profit by the discrete Fourier analysis and the exact
form of \eqref{varphi}, for non-synchronous observations we rely
on continuous-time analogues as approximation which coincide with
the first-order discrete expressions.}. Instead of the empirical norm we now use $[\varphi_{jk},\varphi_{jk}]=\int_{0}^{1}\varphi_{jk}^{2}(t)\,dt=h_{n}^{-2}\pi^{2}j^{2}$. In the multi-dimensional setup we introduce a diagonal matrix function of noise levels $\mathcal{H}(t)=\operatorname{diag}\big(\eta_l\big(\nu_l(F_l^{-1})^{\prime}(t)\big)^{1/2}\big)_{l=1,\dots,d}$ incorporating constants $\nu_l$ when $n/n_l\rightarrow\nu_l$. By a locally constant approximation of the observation frequencies we get a bin-wise locally constant approximation of $\c {H}$:
\begin{align}\label{noiselevel}{\bf{H}}_k^n=\operatorname{diag}(n^{-1}\eta_l^2\nu_l(F_l^{-1})^{\prime}((k-1)h_n)\big)_{l=1,\dots,d}=\operatorname{diag}(H_l^{kh_n})_{l=1,\dots,d}\,.\end{align} 
The optimal weights are $\begin{aligned}w_{jk}^{p,q}=(I_{k}^{(p,q)})^{-1}I_{jk}^{(p,q)}\,\end{aligned}
$, where
\begin{align}\label{eq:-4}
I_{j(k+1)}^{(p,q)}\hspace*{-0.075cm}=\hspace*{-0.075cm}\left(\Sigma_{kh_{n}}^{(pp)}\Sigma_{kh_{n}}^{(qq)}\hspace*{-0.075cm}+\hspace*{-0.075cm}(\Sigma_{kh_{n}}^{(pq)})^{2}\hspace*{-0.075cm}+\hspace*{-0.075cm}H_{p}^{kh_{n}}H_{q}^{kh_{n}}\hspace*{-0.075cm}\left[\varphi_{jk},\varphi_{jk}\right]^{2}+\hspace*{-0.075cm}\big(\Sigma_{kh_{n}}^{(pp)}H_{q}^{kh_{n}}+\Sigma_{kh_{n}}^{(qq)}H_{p}^{kh_{n}}\big)\hspace*{-0.075cm}\left[\varphi_{jk},\varphi_{jk}\right]\right)\hspace*{-0.05cm}^{-1}.
\end{align}
They depend on the volatilities, covolatility and noise levels of the considered
components as defined in \eqref{noiselevel}. The local noise level
combines the global noise variance $\eta_{p}^{2}$ and local observation
densities. It can be estimated by 
\begin{align}
\hat{H}_{p}^{kh_{n}}=\frac{\sum_{i=1}^{n_{p}}\big(\Delta_{i}^nY^{(p)}\big)^{2}}{2n_ph_{n}}\sum_{kh_{n}\le t_{v}^{(p)}\le(k+1)h_{n}}\hspace*{-0.15cm}\big(t_{v}^{(p)}-t_{v-1}^{(p)}\big)^{2}\,,\label{noisehat}
\end{align}
see the asymptotic identity \eqref{qvt} below. Averaging empirical
covariances $S_{jk}S_{jk}^{\top}$ over different spectral frequencies
$j=1,\ldots,J_{n}$ and over a set of $2K_{n}+1$ adjacent bins
yields a consistent estimator of the instantaneous covolatility matrix:
\begin{align}
\hat{\Sigma}_{(k-1)h_n}^{pilot}=(2K_{n}+1)^{-1}\sum_{m=(k-1-K_{n})\vee 1}^{(k-1+K_{n})\wedge h_n^{-1}}\hspace*{-0.1cm}J_{n}^{-1}\,\sum_{j=1}^{J_{n}}\Big(S_{jk}S_{jk}^{\top}-\hat{{\bf {H}}}_{k}^{n}\Big)\,,\label{pilot}
\end{align}
with $\hat{{\bf H}}_{k}^{n}$ the estimated noise levels matrix \eqref{noiselevel}.\\
The two latter estimators provide adaptive pre-estimated optimal weights
${\hat{w}}_{jk}^{p,q}$, again by plug-in in \eqref{eq:-4}. The bivariate
spectral covolatility estimator with adaptive weights for $p\ne q,p,q=1,\ldots,d$,
is \begin{subequations} 
\begin{align}
\ICV_{n,t}^{(p,q)} & =\sum_{k=1}^{\lfloor th_{n}^{-1}\rfloor}h_{n}\sum_{j=1}^{\lfloor nh_{n}\rfloor-1}{\hat{w}}_{jk}^{p,q}\Big(S_{jk}^{(p)}S_{jk}^{(q)}\Big)\,.\label{specv}
\end{align}
Differently as in \eqref{spev} we do not have to correct the cross
product $S_{jk}^{(p)}S_{jk}^{(q)}$ for a bias, since the noise is
component-wise independent. The estimator of the variance is: 
\begin{align}
\widehat{\mathcal{V}}_{n,t}^{{\scriptscriptstyle {{\mathcal{ICV}}^{(p,q)}}}} & =\sum_{k=1}^{\lfloor th_{n}^{-1}\rfloor}h_{n}^{2}\,\big(\hat{I}_{k}^{(p,q)}\big)^{-1}\,.\label{stspecv}
\end{align}
A more general version of the spectral covolatility estimator for
a model including cross-correlation of the noise (in a synchronous
framework) can be found in \citet{bibingerreiss}. For a simpler exposition
and since this notion of cross-correlation is not adequate for the
more important non-synchronous case, we restrict ourselves here to
noise according to Assumption \ref{noise2}. \end{subequations} 

\subsection{Local method of moments\label{sec:3.4}}
Consider the \textit{{vectors}} of spectral statistics: 
\begin{align}
S_{jk}=\Big(\sum_{i=1}^{n_{p}}\Delta_{i}^{n}Y^{(p)}\Phi_{jk}\Big(\frac{t_{i}^{(p)}+t_{i-1}^{(p)}}{2}\Big)\Big)_{p=1,\dots,d}\,,\label{specmv}
\end{align}
for all $k=1,\ldots,h_{n}^{-1}$ and $j\ge1$.\\
The fundamental novelty of the local method of moments approach is
to involve multivariate Fisher informations as optimal weight matrices
which are $(d^{2}\times d^{2})$ matrices of the following form: 
\begin{align}
\hspace*{-0.1cm}W_{jk}=I_{k}^{-1}I_{jk}=\hspace*{-0.1cm}\Big(\sum_{u=1}^{\lfloor nh_{n}\rfloor-1}\hspace*{-0.175cm}\big(\Sigma_{(k-1)h_{n}}+[\varphi_{uk},\varphi_{uk}]{\bf {H}}_{k}^{n}\big)\hspace*{-0.05cm}^{-\otimes2}\Big)^{-1}\hspace*{-0.075cm}\big(\Sigma_{(k-1)h_{n}}+[\varphi_{jk},\varphi_{jk}]{\bf {H}}_{k}^{n}\big)\hspace*{-0.05cm}^{-\otimes2},\label{weightsmv}
\end{align}
with $I_{jk}^{-1}=\Cov(S_{jk}S_{jk}^{\top})$, where $A^{\otimes2}=A\otimes A$ denotes the Kronecker product of
a matrix with itself and $A^{-\otimes2}=(A^{\otimes2})^{-1}=(A^{-1})^{\otimes2}$.
The main difference to estimators \eqref{spev} and \eqref{specv}
is that for estimating one specific (co-) volatility of one (two)
components, estimator \eqref{weightsmv} does not only rely on observations
of the one (two) considered component(s) but profits from information
inherent in all other components with some correlation to the considered
ones. In general, this facilitates a much smaller variance in the
multivariate model.\\
With the pilot estimates \eqref{pilot} and estimators for the noise
level \eqref{noisehat} at hand, we derive estimated optimal weight
matrices for building a linear combination over spectral frequencies
$j=1,\ldots,\lfloor nh_{n}\rfloor-1$, similar as above. The final
estimator of the vectorization of the integrated covolatility matrix
$\operatorname{vec}(\int_{0}^{t}\Sigma_{s}\,ds)$, becomes \begin{subequations}
\begin{align}
\LMM_{n,t}=\sum_{k=1}^{\lfloor th_{n}^{-1}\rfloor}h_{n}\sum_{j=1}^{\lfloor nh_{n}\rfloor-1}\hat{W}_{jk}\operatorname{vec}\left(S_{jk}S_{jk}^{\top}-\hat{{\bf {H}}}_{k}^{n}\right)\,,\label{lmm}
\end{align}
and the estimator of its variance-covariance matrix: 
\begin{align}
\hat{\mathbf{I}}_{n,t}^{-1}=\sum_{k=0}^{\lfloor th_{n}^{-1}\rfloor}h_{n}^{2}\Big(\sum_{j=1}^{\lfloor nh_{n}\rfloor-1}\hat{I}_{jk}\Big)^{-1}\,.\label{stlmm}
\end{align}
\end{subequations} Compared to the approach by \citet{lixiu}, though
given similar names, our method is quite different. One common feature
is the two-stage adaptivity where pre-estimated spot volatilities
are plugged in for the final estimator.

\section{Asymptotic theory\label{sec:4}}
We start with the one-dimensional experiment. We decompose $X$ as
\begin{subequations}
\begin{align}X_t=X_0+\tilde X_t+(X_t-X_0-\tilde X_t)\,,\end{align} where $\tilde X$ is a simplified process without drift and with a piecewise constant approximation of the volatility:
\begin{align}\label{tildeX}\tilde X_t=\int_0^t \sigma_{\lfloor sh_n^{-1}\rfloor h_n}\,dW_s~.\end{align}
\end{subequations}
The asymptotic theory of the spectral estimators is conducted first with optimal oracle weights. Below, the effect of a pre-estimation of the weights for the fully adaptive estimator is shown to be asymptotically negligible at first order.
In the following, we distinguish between $\IV_{n,t}^{or}(Y)$, the oracle version of the spectral volatility estimator \eqref{spev}, and $\IV^{or}_{n,t}(\tilde X+\epsilon)$ for the oracle estimator in a simplified experiment in which $\tilde X$ instead of $X$ is observed with noise. It turns out that both have the same asymptotic limiting distribution, see Proposition \ref{propremainder}. In order to establish a functional limit theorem, we decompose the estimation error of the oracle version of \eqref{spev} in the following way:
\begin{subequations}
\begin{align}\label{dec1}\IV_{n,t}^{or}(Y)-\int_0^t\sigma_s^2\,ds&=\IV_{n,t}^{or}(\tilde X+\epsilon)-\int_0^t\sigma^2_{\lfloor sh_n^{-1}\rfloor h_n }\,ds\\
&\label{dec2}+\IV^{or}_{n,t}(Y)-\IV^{or}_{n,t}(\tilde X+\epsilon)-\int_0^t\big(\sigma^2_s-\sigma^2_{\lfloor sh_n^{-1}\rfloor h_n }\big)\,ds\,.\end{align}
\end{subequations}
The proof of the functional central limit theorem (CLT) falls into three major parts. First, we prove the result of Theorem \ref{thm:1} for the right-hand side of \eqref{dec1}. In the second step the approximation error in \eqref{dec2} is shown to be asymptotically negligible. Finally, we establish that the same functional stable CLT carries over to the adaptive estimators by proving that the error of the plug-in estimation of optimal weights is asymptotically negligible.
\begin{prop}\label{propmain}On the assumptions of Theorem \ref{thm:1}, it holds true that
\begin{align}n^{1/4}\Bigg(\IV_{n,t}^{or}(\tilde X+\epsilon)-h_n\sum_{k=1}^{\lfloor th_n^{-1}\rfloor}\sigma_{\left(k-1\right)h_n}^{2}\Bigg)\stackrel{st}{\longrightarrow}\int_{0}^{t}\sqrt{8\eta\left|\sigma_{s}^{3}\right|}\, dB_{s}\,,\end{align}
as $n\rightarrow\infty$ on $\mathcal{D}\left[0,1\right]$ where $B$
is a Brownian motion defined on an extension of the original probability
space $(\Omega,\mathcal{G},(\mathcal{G}_{t})_{0\leq t\leq1},\P)$, independent of
the original $\sigma$-algebra $\mathcal{G}$.
\end{prop}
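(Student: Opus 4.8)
The plan is to reduce the statement to a martingale central limit theorem for triangular arrays, following the blocking structure of the spectral estimator. First I would work conditionally on $\mathcal{F}$ (the $\sigma$-field of the signal), treating the piecewise-constant volatilities $\sigma_{(k-1)h_n}$ as fixed within each bin; under Assumption \ref{noise1} the noise is independent of $\mathcal{F}$, so given $\mathcal{F}$ the increments $\Delta_i^n\tilde X + \Delta_i^n\epsilon$ form a Gaussian array, and the spectral statistics $S_{jk}$ built from $\tilde X+\epsilon$ are, bin-wise and across frequencies, asymptotically independent centered Gaussians with variances $\sigma_{(k-1)h_n}^2 + \frac{\eta^2}{n}[\varphi_{jk},\varphi_{jk}]_n$. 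I would write
\begin{align*}
n^{1/4}\Big(\IV_{n,t}^{or}(\tilde X+\epsilon) - h_n\sum_{k=1}^{\lfloor t h_n^{-1}\rfloor}\sigma_{(k-1)h_n}^2\Big) = \sum_{k=1}^{\lfloor t h_n^{-1}\rfloor}\zeta_{n,k},\qquad \zeta_{n,k}=n^{1/4}h_n\sum_{j=1}^{\lfloor nh_n\rfloor-1} w_{jk}\big(S_{jk}^2 - \E[S_{jk}^2\,|\,\mathcal{F}]\big),
\end{align*}
so that each $\zeta_{n,k}$ is $\mathcal{G}_{kh_n}$-measurable with $\E[\zeta_{n,k}\,|\,\mathcal{G}_{(k-1)h_n}]=o_{\P}(\cdot)$ after the oracle weights have been substituted, making $(\zeta_{n,k})_k$ a martingale difference array up to negligible terms. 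The key is then to verify the two conditions of the stable martingale CLT (e.g.\ Jacod--Shiryaev / Theorem IX.7.28, or Theorem 2.6 of \cite{podvet}): convergence of the conditional variances and a conditional Lindeberg (here conditional fourth-moment) condition.

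The conditional-variance computation is the analytic heart. Using that for a centered Gaussian $S_{jk}$ one has $\var(S_{jk}^2\,|\,\mathcal{F}) = 2(\var(S_{jk}\,|\,\mathcal{F}))^2 = 2 I_{jk}^{-1}$, and that the oracle weights are exactly $w_{jk}=I_k^{-1}I_{jk}$ with $I_k=\sum_j I_{jk}$, the bin-$k$ conditional variance collapses to $\E[\zeta_{n,k}^2\,|\,\mathcal{F}] \approx n^{1/2}h_n^2\cdot 2 I_k^{-1}$. I would then evaluate $I_k^{-1}=\big(\sum_{j\ge1}(\sigma_{(k-1)h_n}^2 + \frac{\eta^2}{n}[\varphi_{jk},\varphi_{jk}]_n)^{-2}\big)^{-1}$ asymptotically: with $h_n\asymp n^{-1/2}\log n$ and $[\varphi_{jk},\varphi_{jk}]_n\approx h_n^{-2}\pi^2 j^2$ (up to the sine-correction factor), the sum over $j$ is a Riemann-sum approximation to the integral $\int_0^\infty (\sigma^2 + \eta^2\pi^2 x^2 h_n^{-2} n^{-1})^{-2}\,dx$, which after the substitution scaling by $n^{1/4}$ (note $h_n^{-1}n^{-1/2}\to 0$ only up to the log, which is exactly why the $\log n$ factor is needed for the tail of the frequency sum to vanish) produces the constant giving $n^{1/2}h_n^2\cdot 2 I_k^{-1}\to 8\eta|\sigma_{(k-1)h_n}^3|\cdot h_n$. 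Summing over $k\le \lfloor t h_n^{-1}\rfloor$ and using continuity of $s\mapsto\sigma_s$ (Assumption \ref{H1}) turns the bin sum into the Riemann integral $\int_0^t 8\eta|\sigma_s^3|\,ds$, which is the quadratic variation of the claimed limit $\int_0^t\sqrt{8\eta|\sigma_s^3|}\,dB_s$; the functional (process-level) statement follows because this convergence holds jointly for all $t$ via the monotone structure of the partial sums. I would also check the cross term $\E[\zeta_{n,k}\,|\,\mathcal{F}]\cdot(\text{bias correction})$ vanishes — here the exact form of the bias correction $[\varphi_{jk},\varphi_{jk}]_n\eta^2/n$ in \eqref{locvolest} matters so that the recentering in $\zeta_{n,k}$ is genuinely the conditional mean.

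For the Lindeberg/fourth-moment condition I would bound $\sum_k \E[\zeta_{n,k}^4\,|\,\mathcal{F}]$: since $S_{jk}$ is conditionally Gaussian, $\E[(S_{jk}^2-\E S_{jk}^2)^4\,|\,\mathcal{F}]\lesssim (\var(S_{jk}\,|\,\mathcal{F}))^8 = I_{jk}^{-4}$, and each $\zeta_{n,k}$ has conditional fourth moment of order $n h_n^4 (\sum_j w_{jk}^2 I_{jk}^{-2} + (\sum_j w_{jk}^2 I_{jk}^{-1})^2)$; plugging in $w_{jk}=I_k^{-1}I_{jk}$ one gets $\E[\zeta_{n,k}^4\,|\,\mathcal{F}] = O(n h_n^4 I_k^{-2}) = O(h_n^2)$ per bin, so the sum over $\lfloor t h_n^{-1}\rfloor$ bins is $O(h_n)\to 0$. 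To upgrade conditional convergence to $\mathcal{G}$-stable convergence with a limit independent of $\mathcal{G}$, I would invoke the standard argument that a conditionally Gaussian array whose conditional variances converge $\mathcal{F}$-measurably yields $\mathcal{F}$-stable (hence $\mathcal{G}$-stable, since the noise enters only through the already-accounted-for conditional variances) convergence to $\int_0^t\sqrt{8\eta|\sigma_s^3|}\,dB_s$ with $B\perp\mathcal{G}$; the martingale-difference nesting in $(\mathcal{G}_{kh_n})$ is what makes the stable CLT applicable. The main obstacle I anticipate is \emph{not} any single estimate but the bookkeeping that the true spectral statistics are only \emph{approximately} Gaussian and only \emph{approximately} independent across $j$ and $k$ — controlling the dependence between adjacent bins coming from the endpoint increments $\Delta_i^n\tilde X$ straddling bin boundaries, and the non-Gaussian correction from the eighth-moment noise assumption in \eqref{noise1}, so that all these enter only at order $o_{\P}(n^{-1/4})$ and do not perturb either the limiting variance or the martingale structure. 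Handling that cleanly is where the empirical-scalar-product machinery of Definition \ref{scalar} and the choice $h_n\asymp n^{-1/2}\log n$ do the real work.
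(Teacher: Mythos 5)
Your overall skeleton (bin-wise increments $\zeta_{k}^{n}$, a conditional-variance computation that collapses to $I_k^{-1}$ and then a Riemann-sum/integral evaluation giving $8\eta|\sigma_{(k-1)h_n}|^3$, a fourth-moment Lyapunov bound, and an appeal to the Jacod/Podolskij--Vetter stable CLT) is the same as the paper's. But there is a genuine gap in how you set up the conditioning, and it propagates into your stability argument. You condition on $\mathcal{F}$ and claim that, given $\mathcal{F}$, the statistics built from $\tilde X+\epsilon$ are (approximately) centered Gaussians with variance $\sigma^2_{(k-1)h_n}+\tfrac{\eta^2}{n}[\varphi_{jk},\varphi_{jk}]_n$. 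This is false: $\mathcal{F}$ contains the whole path of $W$, hence of $\tilde X$, so conditionally on $\mathcal{F}$ the signal part $\langle n\Delta^n\tilde X,\Phi_{jk}\rangle_n$ is \emph{deterministic} and only the (non-Gaussian) noise fluctuates. Consequently your centering $\E[S_{jk}^2\,|\,\mathcal{F}]$ is not the centering appearing in $\IV_{n,t}^{or}(\tilde X+\epsilon)-h_n\sum_k\sigma^2_{(k-1)h_n}$; the discarded term $n^{1/4}h_n\sum_{k,j}w_{jk}\big(\langle n\Delta^n\tilde X,\Phi_{jk}\rangle_n^2-\sigma^2_{(k-1)h_n}\big)$ is exactly the signal-driven martingale part, it is of the same order as the limit and carries the whole $2\sigma^4$ component (and the cross component) of the asymptotic variance, so it cannot be absorbed as ``negligible''. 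The correct conditioning is on $\mathcal{G}_{(k-1)h_n}$, as in the paper, where $\E[\tilde S_{jk}^2|\mathcal{G}_{(k-1)h_n}]=\sigma^2_{(k-1)h_n}+\tfrac{\eta^2}{n}[\varphi_{jk},\varphi_{jk}]_n$ makes the array exactly centered and keeps the within-bin Brownian increments random, which is what produces the variance you then compute.

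The second, related gap is stability. Verifying \eqref{s1}--\eqref{s3} (centering, conditional variances, Lyapunov) is not enough for \emph{stable} convergence with a limit independent of $\mathcal{G}$: Jacod's theorem additionally requires the orthogonality conditions \eqref{s4} and \eqref{s5}, i.e.\ that $\sum_k\E[\zeta_k^n(W_{kh_n}-W_{(k-1)h_n})|\mathcal{G}_{(k-1)h_n}]\to 0$ and the analogue for all bounded martingales orthogonal to $W$. Your proposed shortcut (``conditionally Gaussian given $\mathcal{F}$ with $\mathcal{F}$-measurably convergent variances implies $\mathcal{F}$-stable convergence'') would only be available if the conditional structure were as you describe, and it is precisely under leverage -- $\sigma$ correlated with $W$, which Assumption \ref{H1} allows -- that such a conditioning argument breaks down and \eqref{s4}--\eqref{s5} must be checked by hand. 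The paper does this through a total-set argument over product martingales from the three classes $\mathcal{N}^0,\mathcal{N}^1,\mathcal{N}^2$, an It\^o-formula computation for terms like $\E[(\Delta^n_l W)^2\Delta^n_l M|\mathcal{F}_{(k-1)h_n}]$, and a counting argument over the finitely many bins containing jump points of the piecewise-constant integrands; none of this is in your proposal, and it is a substantive part of the proof rather than bookkeeping. A more minor point: the identity $\var(S_{jk}^2)=2(\var S_{jk})^2$ you use requires Gaussian noise; under (Obs-1) the fourth-cumulant correction $\eta'-3\eta^4$ produces an extra term which must be shown negligible after weighting (the paper does this via the identity \eqref{eq:or3} and the bound $R_n\lesssim h_n^2/n$); you flag this but do not carry it out.
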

\begin{prop}\label{propremainder}
On the assumptions of Theorem \ref{thm:1}, it holds true that: 
\begin{align}n^{1/4}\Big(\IV_{n,t}^{or}(Y)-\IV_{n,t}^{or}(\tilde X+\epsilon)-\int_0^t\big(\sigma^2_s-\sigma^2_{\lfloor sh_n^{-1}\rfloor h_n }\big)\,ds\Big)\stackrel{ucp}{\longrightarrow} 0,~\mbox{as}~n\rightarrow \infty\,.\end{align}
\end{prop}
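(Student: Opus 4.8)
The plan is to use the linearity of the spectral statistics $S_{jk}$ in the observed increments to reduce the claim to a list of term-by-term negligibility estimates at rate $n^{1/4}$. Since the noise bias corrections in $\IV_{n,t}^{or}(Y)$ and $\IV_{n,t}^{or}(\tilde X+\epsilon)$ coincide and $S_{jk}(Y)-S_{jk}(\tilde X+\epsilon)=S_{jk}(X-\tilde X)=S_{jk}(D)+S_{jk}(R)$ with drift part $D_t=\int_0^tb_s\,ds$, volatility-approximation martingale $R_t=\int_0^t(\sigma_s-\sigma_{\lfloor sh_n^{-1}\rfloor h_n})\,dW_s$, and the initial value $X_0$ dropping out of the increments, one has
\[
\IV_{n,t}^{or}(Y)-\IV_{n,t}^{or}(\tilde X+\epsilon)=\sum_{k=1}^{\lfloor th_n^{-1}\rfloor}h_n\sum_{j=1}^{\lfloor nh_n\rfloor-1}w_{jk}\Big(2\,S_{jk}(\tilde X+\epsilon)\big(S_{jk}(D)+S_{jk}(R)\big)+\big(S_{jk}(D)+S_{jk}(R)\big)^2\Big)\,.
\]
With $\bar X_t=\int_0^t\sigma_s\,dW_s$, so that $S_{jk}(\tilde X)+S_{jk}(R)=S_{jk}(\bar X)$ by linearity, the summand regroups into (a) the volatility-approximation block $\sum_kh_n\sum_jw_{jk}\big(S_{jk}(\bar X)^2-S_{jk}(\tilde X)^2\big)$, (b) the residual--noise block $\sum_kh_n\sum_jw_{jk}\,2\,S_{jk}(\epsilon)S_{jk}(R)$, and (c) the drift-involving remainder $\sum_kh_n\sum_jw_{jk}\big(2\,S_{jk}(\tilde X)S_{jk}(D)+2\,S_{jk}(\epsilon)S_{jk}(D)+S_{jk}(D)^2+2\,S_{jk}(D)S_{jk}(R)\big)$. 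It then suffices to prove that block (a) equals $\int_0^t(\sigma_s^2-\sigma_{\lfloor sh_n^{-1}\rfloor h_n}^2)\,ds$ up to a remainder of order $\KLEINO_{\P}(n^{-1/4})$ in the ucp sense, and that blocks (b) and (c) are $\KLEINO_{\P}(n^{-1/4})$ uniformly in $t$.

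For block (a) I would compare $\mathcal{F}_{(k-1)h_n}$-conditional expectations. By the martingale property of $\bar X$ the off-diagonal increment products vanish and, using $\sum_jw_{jk}=1$, the orthonormality $\langle\Phi_{jk},\Phi_{jk}\rangle_n=1$, the averaging property that $\int_{(k-1)h_n}^{kh_n}\Phi_{jk}^2(s)f(s)\,ds$ equals $h_n^{-1}\int_{(k-1)h_n}^{kh_n}f(s)\,ds$ up to oscillation errors for slowly varying $f$ (in particular $\int_{(k-1)h_n}^{kh_n}(s-(k-1)h_n)\Phi_{jk}^2(s)\,ds=h_n/2$ for all $j$), and the moment bounds \eqref{JM2}--\eqref{JM}, one identifies the conditional mean of (a) with $\sum_{k\le\lfloor th_n^{-1}\rfloor}\E\big[\int_{(k-1)h_n}^{kh_n}(\sigma_s^2-\sigma_{(k-1)h_n}^2)\,ds\,\big|\,\mathcal{F}_{(k-1)h_n}\big]$ up to $\KLEINO_{\P}(n^{-1/4})$. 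Both (a) and $\int_0^{\lfloor th_n^{-1}\rfloor h_n}(\sigma_s^2-\sigma_{\lfloor sh_n^{-1}\rfloor h_n}^2)\,ds$ differ from this common conditional-mean-sum only by martingale difference arrays in $k$: for (a) the $k$-th summand is $h_n\sum_jw_{jk}S_{jk}(R)\big(S_{jk}(\bar X)+S_{jk}(\tilde X)\big)$, whose per-bin conditional variance is $\mathcal{O}_{\P}(h_n^3)$ by Cauchy--Schwarz, the $L^8$-moments of Assumption \ref{noise1} and $\E\big[\sup_{(k-1)h_n\le s\le kh_n}(\sigma_s-\sigma_{(k-1)h_n})^2\big]\lesssim h_n$ from \eqref{JM}, and the corresponding variance for the Riemann integral is of the same order, so summing over the $\asymp h_n^{-1}$ bins and applying Doob's maximal inequality bounds both maximal fluctuations by $\mathcal{O}_{\P}(h_n)=\KLEINO_{\P}(n^{-1/4})$; the endpoint gap $\int_{\lfloor th_n^{-1}\rfloor h_n}^t(\cdots)\,ds$ is uniformly $\lesssim h_n\sup_{s\le1}|\sigma_s^2-\sigma_{\lfloor sh_n^{-1}\rfloor h_n}^2|=\KLEINO_{\P}(n^{-1/4})$. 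Block (b) is $\mathcal{F}$-conditionally centered because $\epsilon$ is independent of $\mathcal{F}$ with mean zero; by the orthogonality $\operatorname{cov}(S_{jk}(\epsilon),S_{j'k}(\epsilon))=n^{-1}[\varphi_{jk},\varphi_{j'k}]_n\delta_{jj'}$ and $\E[S_{jk}(R)^2\,|\,\mathcal{F}_{(k-1)h_n}]\lesssim h_n$ its conditional variance is $\KLEINO_{\P}(n^{-1/2})$, and subtracting the $(\mathcal{G}_{kh_n})$-compensator and applying Doob upgrades this to the ucp-statement.

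Block (c) is the crux, and here the mild H\"older regularity of $b$ is indispensable. On each bin I would Taylor-expand $g$ about $(b^A_{(k-1)h_n},b^B_{(k-1)h_n})$, decomposing $D$ on that bin into a bin-wise constant part, a $b^B$-increment part of order $\mathcal{O}(h_n^{\nu})$ by $\nu$-H\"olderness, and a $b^A$-increment part that is a conditionally centered semimartingale increment, controlled via Burkholder--Davis--Gundy and local boundedness of the characteristics of $b^A$. For the constant part $S_{jk}$ reduces to $b_{(k-1)h_n}\langle\mathbf{1}_{[(k-1)h_n,kh_n]},\Phi_{jk}\rangle_n$, and the oscillation of $\Phi_{jk}$ over the bin yields the decisive estimate $|\langle\mathbf{1}_{[(k-1)h_n,kh_n]},\Phi_{jk}\rangle_n|\lesssim\sqrt{h_n}\,j^{-1}$, so that altogether $\E[S_{jk}(D)^4]\lesssim h_n^2 j^{-4}+h_n^{4\nu+2}+(\text{semimartingale term})$. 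With this sharp bound the sum of $S_{jk}(D)^2$ and the cross terms with $S_{jk}(\tilde X)$ and $S_{jk}(R)$ are $\KLEINO_{\P}(n^{-1/4})$ by Cauchy--Schwarz in $L^4$ (using $\E[S_{jk}(\tilde X)^4]\lesssim1$ and $\E[S_{jk}(R)^4]\lesssim h_n^2$); the delicate one is $\sum_kh_n\sum_jw_{jk}\,2\,S_{jk}(D)S_{jk}(\epsilon)$, which is conditionally centered but for which a crude bound $|S_{jk}(D)|\lesssim\sqrt{h_n}$ would be fatal, because $n^{-1}[\varphi_{jk},\varphi_{jk}]_n$ grows up to order $n$ at the highest admissible frequencies. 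The point is that $j^{-2}n^{-1}[\varphi_{jk},\varphi_{jk}]_n=\mathcal{O}\big((nh_n^2)^{-1}\big)$ uniformly over $1\le j\le\lfloor nh_n\rfloor-1$, so the $j^{-1}$-decay of $S_{jk}(D)$ provided by the oscillation cancellation exactly compensates the high-frequency growth of the noise spectral variances, making the conditional variance of this term $\KLEINO_{\P}(n^{-1/2})$, again uniformly in $t$ by Doob. I expect this compensation of drift-spectral decay against noise-spectral growth to be the main technical obstacle; absent any regularity of $b$ it breaks down and the rescaled term diverges, which is precisely why the smoothness hypothesis on $b$ is used.

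Finally, a routine localization replaces the ``locally bounded'' and ``H\"older with locally bounded constant'' hypotheses by their globally bounded counterparts via stopping at the times $T_n\uparrow\infty$, and since each of the bounds above is either obtained directly for the supremum over $t$ or passes to such a supremum by Doob's inequality after subtracting a negligible compensator, the asserted $\stackrel{ucp}{\longrightarrow}0$ follows from the convergence in probability of the corresponding error bounds.
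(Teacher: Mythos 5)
Your proposal is essentially workable, but it organizes the argument quite differently from the paper, so let me compare. The paper writes $S_{jk}^{2}-\tilde S_{jk}^{2}=2\tilde S_{jk}(S_{jk}-\tilde S_{jk})+2(\xi_{jk}-\tilde S_{jk})(S_{jk}-\tilde S_{jk})$, disposes of the quadratic remainder by Markov's inequality together with the single moment bound $\E[(S_{jk}-\tilde S_{jk})^{4}]\lesssim h_n^{2}$, splits the linear term into a martingale part $M_t^{n}$ (bounded crudely in $L^{2}$ by Minkowski and Cauchy--Schwarz, with no drift regularity needed) and its compensator $R_t^{n}$, and only inside the compensator separates the drift contribution $D_{jk}^{n}$ from the volatility-approximation contribution $V_{jk}^{n}$; moreover it does not match anything against $\int_0^t(\sigma_s^2-\sigma^2_{\lfloor sh_n^{-1}\rfloor h_n})\,ds$, but shows that this Riemann-approximation term is itself $\KLEINO_{\P}(n^{-1/4})$ uniformly via \eqref{JM2} and \eqref{JM}. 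You instead expand $S(Y)=S(\tilde X+\epsilon)+S(D)+S(R)$ and treat each cross term individually, using conditional centering plus the frequency-orthogonality \eqref{or2} for the noise blocks and termwise $L^{4}$ bounds for the rest, and you match your block (a) against the integral. The matching step is more work than necessary: the per-bin conditional means of block (a) and of the integral are each of order $h_n^{1+\alpha}$ with $\alpha>1/2$ by \eqref{JM2}--\eqref{JM}, so both sides are separately negligible at rate $n^{1/4}$, exactly as in the paper's separate treatment. What the paper's bookkeeping buys is that one fourth-moment estimate covers all fluctuation terms at once and it is transparent that the drift smoothness enters only through $D_{jk}^{n}$; what your finer decomposition buys is sharper rates for the conditionally centered noise cross terms.

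Two caveats on your narrative, which borders on a gap in the diagnosis though not in the estimates. First, the drift--noise term is not the crux: since $[\epsilon,\varphi_{jk}]_n$ is centered, independent of $\mathcal F$ and uncorrelated across frequencies by \eqref{or2}, and since $\sum_j w_{jk}^{2}\,n^{-1}[\varphi_{jk},\varphi_{jk}]_n\asymp(\sqrt n h_n)^{-1}$, even the crude bound $|S_{jk}(D)|\lesssim h_n^{1/2}$ gives a conditional variance of order $h_n$ for the rescaled term, so no $j$-decay and no regularity of $b$ is needed there; in the paper this term simply drops out of the compensator and is absorbed into $M_t^{n}$. The genuinely delicate term in your scheme is the Brownian--drift cross term $2S_{jk}(\tilde X)S_{jk}(D)$: your termwise Cauchy--Schwarz with $\E[S_{jk}(D)^{2}]^{1/2}\lesssim h_n^{1/2}j^{-1}+h_n^{\nu+1/2}+h_n$ yields a bound of order $n^{1/4}h_n^{1/2}\sum_j w_{jk}j^{-1}\asymp \log\log n/\sqrt{\log n}$, which does vanish, but only marginally and only because $\sqrt n h_n\asymp\log n$ limits the harmonic sum to the active frequencies; with merely bounded $b$ (no $j^{-1}$ gain on the constant part) it would diverge like $\sqrt{\log n}$. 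The paper's route for this term -- exploiting $\E[\langle n\triangle^{n}\tilde X,\Phi_{jk}\rangle_n|\mathcal F_{(k-1)h_n}]=0$ so that the bin-wise constant drift vanishes from $D_{jk}^{n}$ and only the increment $b_s-b_{(k-1)h_n}$, controlled by Assumption \ref{H1}, remains -- is both sharper and more robust, and it is there, not in the noise cross term, that the smoothness hypothesis on $b$ is really consumed.
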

Theorem \ref{thm:1} is then an immediate consequence of the following proposition:
\begin{prop}\label{tight1}On the assumptions of Theorem \ref{thm:1}:
\begin{align}n^{1/4}\left(\IV_{n,t}-\IV_{n,t}^{or}(Y)\right)\stackrel{ucp}{\longrightarrow} 0,~~~~~\mbox{as $n\rightarrow \infty$}\,.\end{align}
\end{prop}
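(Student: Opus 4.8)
The plan is to show that replacing the oracle weights $w_{jk}$ by the adaptive, pre-estimated weights $\hat w_{jk}$ produces an error of order $\KLEINO_{\P}(n^{-1/4})$ uniformly in $t$. Writing out the difference, the quantity $n^{1/4}(\IV_{n,t}-\IV_{n,t}^{or}(Y))$ equals
\begin{align*}
n^{1/4}\sum_{k=1}^{\lfloor th_n^{-1}\rfloor}h_n\sum_{j=1}^{\lfloor nh_n\rfloor-1}(\hat w_{jk}-w_{jk})\Big(S_{jk}^2-[\varphi_{jk},\varphi_{jk}]_n\tfrac{\hat\eta^2}{n}\Big)
+ n^{1/4}\sum_{k=1}^{\lfloor th_n^{-1}\rfloor}h_n\sum_{j=1}^{\lfloor nh_n\rfloor-1}w_{jk}[\varphi_{jk},\varphi_{jk}]_n\tfrac{\eta^2-\hat\eta^2}{n}.
\end{align*}
The second term is controlled directly: since $\hat\eta^2-\eta^2=\mathcal{O}_{\P}(n^{-1/2})$ by \cite{zhangmykland}, $\sum_j w_{jk}=1$, and $[\varphi_{jk},\varphi_{jk}]_n/n = \mathcal{O}(j^2 h_n^{-2} n^{-1})$ is summed against weights concentrated on low frequencies $j\lesssim (nh_n^2)^{1/2}\asymp \log n$, one gets an overall bound $n^{1/4}\cdot 1\cdot \mathcal{O}_{\P}((nh_n)^{-1}\log n)\cdot\mathcal{O}_{\P}(n^{-1/2})=\mathcal{O}_{\P}(n^{-1/4}(\log n)^2)=\KLEINO_{\P}(1)$ uniformly in $t$, using $h_n\asymp n^{-1/2}\log n$.

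For the first, genuinely delicate term, the idea is to exploit that $w_{jk}$ is a smooth function of $(\sigma^2_{(k-1)h_n},\eta^2)$ and that the pilot estimates satisfy $\hat\sigma^{2,pilot}_{(k-1)h_n}-\sigma^2_{(k-1)h_n}=\mathcal{O}_{\P}(n^{-1/8})$ (shown in Section \ref{sec:3.2}) and $\hat\eta^2-\eta^2=\mathcal{O}_{\P}(n^{-1/2})$. A first-order Taylor expansion of $\hat w_{jk}$ around the oracle argument, using the explicit rational form \eqref{weights}, gives $\hat w_{jk}-w_{jk}=\partial_{\sigma^2}w_{jk}\cdot(\hat\sigma^{2,pilot}_{(k-1)h_n}-\sigma^2_{(k-1)h_n})+(\text{$\eta$-term})+(\text{remainder})$, where the derivative $\partial_{\sigma^2}w_{jk}$ is again a bounded rational expression summing (in $j$, against the relevant spectral quantities) to $\mathcal{O}(1)$. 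One then splits $S_{jk}^2-[\varphi_{jk},\varphi_{jk}]_n\eta^2/n$ into its conditional mean given $\mathcal{G}_{(k-1)h_n}$ (which is $\asymp I_{jk}^{-1}$, summable after multiplication by $h_n\partial_{\sigma^2}w_{jk}$ and giving, after multiplication by the $n^{-1/8}$ pilot error, a bound $n^{1/4}\cdot n^{-1/8}\cdot\mathcal{O}_{\P}(\text{deterministic } \asymp n^{-1/4})$-type contribution that is $\KLEINO_{\P}(1)$) and a centered martingale-difference part whose conditional variance is summed across $k$; here the orthogonality of the spectral statistics across frequencies and the block structure make the conditional variance of order $\sum_k h_n^2\,\mathcal{O}_{\P}(n^{-1/4})\cdot(\text{var of }S_{jk}^2\text{ sums})$, yielding after the $n^{1/4}$ scaling an $L^2$ bound that vanishes. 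The pilot average over $2K_n+1\asymp n^{1/4}$ adjacent bins is essential here: it makes $\hat\sigma^{2,pilot}_{(k-1)h_n}-\sigma^2_{(k-1)h_n}$ nearly independent of the spectral statistics $S_{jk}$ entering bin $k$ up to a negligible correction, so the cross terms between the weight-estimation error and the centered statistics do not contribute at first order.

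The main obstacle is precisely this decoupling argument together with the uniformity in $t$: one must control $\sup_t$ of a doubly-indexed sum in which the multiplicative error $\hat w_{jk}-w_{jk}$ is itself data-dependent and correlated with nearby spectral statistics. The cleanest route is to show $L^1$ or $L^2$ convergence of the supremum by a maximal inequality after establishing the martingale-difference structure in $k$ (with respect to the filtration $(\mathcal{G}_{kh_n})_k$), bounding the predictable quadratic variation using Cauchy--Schwarz to separate the $\mathcal{O}_{\P}(n^{-1/8})$-weight-error factor from the $\mathcal{O}_{\P}(1)$ (per unit time) spectral-variance factor, and invoking Doob's inequality. Moment bounds on the spectral statistics up to fourth (indeed eighth, as granted by $\E[\epsilon^8]<\infty$) order—of the type already assembled for the proofs of Propositions \ref{propmain} and \ref{propremainder}—supply the remaining ingredients; the argument parallels the corresponding adaptivity step in \cite{bibingerreiss} and \cite{BHMR}, now carried out in the general semimartingale setting.
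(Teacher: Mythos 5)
There is a genuine gap, in two places. First, your treatment of the conditional-mean part is wrong: the conditional expectation of $S_{jk}^{2}-[\varphi_{jk},\varphi_{jk}]_{n}\eta^{2}/n$ given $\mathcal{G}_{(k-1)h_n}$ is (up to negligible drift/volatility-variation remainders) $\sigma^{2}_{(k-1)h_n}$, an $\mathcal{O}(1)$ quantity constant in $j$ — not ``$\asymp I_{jk}^{-1}$'', which is the conditional \emph{variance}. If you bound this part term by term, as your accounting suggests, you get $n^{1/4}\cdot h_n\sum_k\sum_j|\hat w_{jk}-w_{jk}|\cdot\mathcal{O}(1)\lesssim n^{1/4}\,n^{-1/8}\log^{2}n\to\infty$ (the $\log^{2}n$ coming from the uniform derivative bound on the weight functions, which you also gloss over: near small arguments $|w_j'(x)|$ is only $\lesssim w_j(x)\log^2 n$, not $\mathcal{O}(w_j)$). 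The only way this term is controlled is the exact cancellation $\sum_j(\hat w_{jk}-w_{jk})=0$ coming from the normalization $\sum_j w_{jk}=\sum_j\hat w_{jk}=1$; the paper uses it silently by working with the centered statistics $Z_{jk}=S_{jk}^{2}-[\varphi_{jk},\varphi_{jk}]_{n}\eta^{2}/n-\sigma^{2}_{(k-1)h_n}$. You never invoke it, so your bound for this part does not stand. A similar problem hits your Taylor remainder: the second-order term, bounded in absolute value with $|w_j''(\xi)|\lesssim w_j\log^{4}n$ and $(\hat\sigma^{2}-\sigma^{2})^{2}=\mathcal{O}_{\P}(n^{-1/4})$, still diverges by log factors, so it too needs the centering/averaging structure rather than a crude per-bin estimate.

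Second, the step you call the ``decoupling argument'' is precisely the hard part and is asserted rather than proved. The pilot $\hat\sigma^{2,pilot}_{(k-1)h_n}$ averages over a \emph{two-sided} window of $2K_n+1$ bins including bin $k$ and future bins, so $(\hat w_{jk}-w_{jk})Z_{jk}$ is not a martingale-difference array with respect to $(\mathcal{G}_{kh_n})_k$, and the cross-covariance between the weight error and $Z_{jk}$ is of borderline size (with the crude $\log^{2}n$ derivative bound it does not even vanish); Doob's inequality cannot be invoked as you propose. The paper's proof avoids this entirely with a different device: it evaluates the estimated weights on a coarse grid of blocks of length $r_n$ (with $r_nh_n^{-1}\to\infty$, $r_n^{-1}\lesssim n^{1/4}\log^{-5}n$), proves the uniform bound $|w_j'(x)|\lesssim w_j(x)\log^{2}n$, and then applies Cauchy--Schwarz to separate the per-block weight error $\mathcal{O}_{\P}(\delta_n\log^{2}n)$, $\delta_n\lesssim n^{-1/8}$, from $(\var(\sum_{k\in\text{block}}Z_{jk}))^{1/2}$; the uncorrelatedness of the $Z_{jk}$ across bins turns the block sum into $\sqrt{r_nh_n^{-1}}$ rather than $r_nh_n^{-1}$, yielding the decisive factor $\sqrt{h_n/r_n}$ and the bound $\sqrt{h_n/r_n}\,\delta_n\log^{2}n=\KLEINO(n^{-1/4})$ — no independence or adaptedness of the weight errors is needed. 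The coarse-to-fine replacement (for both oracle and adaptive weights) and the $\hat\eta^{2}$ plug-in are then shown negligible separately. Your handling of the $\hat\eta^{2}$ bias-correction term is essentially fine, but without the centering cancellation and a substitute for the coarse-grid/blocking argument the main term of your proof does not go through.
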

Finally, by consistency of the variance estimators and stable convergence the feasible limit theorems for the adaptive estimators are valid. The asymptotic negligibility of the plug-in estimation in Proposition \ref{tight1} is proven in Section \ref{sec:6} exploiting a uniform bound on the derivative of the weights as function of $\sigma_t$. In fact, it turns out that the weights are robust enough in misspecification of the pre-estimated local volatility to render the difference between oracle and adaptive estimator asymptotically negligible. This carries over to the multivariate methods.

The proof of the functional stable CLT is based on the asymptotic theory developed by \cite{jacodkey}. In order to apply Theorem 3--1 of \cite{jacodkey} (or equivalently Theorem 2.6 of \cite{podvet}), we illustrate the rescaled estimation error as a sum of increments:
\begin{eqnarray}
n^{1/4}\Bigg(\IV_{n,t}^{or}(\tilde X+\epsilon)-h_n\sum_{k=1}^{\lfloor t h_n^{-1}\rfloor}\sigma_{\left(k-1\right)h_n}^{2}\Bigg)= \sum_{k=1}^{\lfloor{th_n^{-1}}\rfloor}\zeta_{k}^{n}\,,\\
\zeta_{k}^{n}=n^{1/4}h_n\sum_{j=1}^{\lfloor nh_n\rfloor-1}w_{jk}\left(\tilde S_{jk}^{2}-\E\left[\tilde S_{jk}^{2}|\mathcal{G}_{\left(k-1\right)h_n}\right]\right),\, k=1,\dots,h_n^{-1}\,,
\end{eqnarray} 
with $\tilde S_{jk}$ being spectral statistics build from observations of $\tilde X+\epsilon$.
For the proof of the functional stable CLT, we need to verify the following five conditions:
\Links
\begin{subequations}
\begin{align}\label{s1}\tag{J1}\sum_{k=1}^{\lfloor{th_n^{-1}}\rfloor}\E\left[\zeta_k^n\big|\mathcal{G}_{(k-1)h_n}\right]\stackrel{ucp}{\longrightarrow} 0\,.\end{align}
Convergence of the sum of conditional variances
\begin{align}\label{s2}\tag{J2}\sum_{k=1}^{\lfloor{th_n^{-1}}\rfloor}\E\left[\left(\zeta_k^n\right)^2\big|\mathcal{G}_{(k-1)h_n}\right]\stackrel{\P}{\rightarrow} \int_0^t v_s^2\,ds\,,\end{align}
with the predictable process $v_s=\sqrt{8\eta|\sigma_s|^3}$, and a Lyapunov-type condition
\begin{align}\label{s3}\tag{J3}\sum_{k=1}^{\lfloor{th_n^{-1}}\rfloor}\E\left[\left(\zeta_k^n\right)^4\big|\mathcal{G}_{(k-1)h_n}\right]\stackrel{\P}{\rightarrow} 0\,.\end{align}
 Finally, stability of weak convergence is ensured if
\begin{align}\label{s4}\tag{J4}\sum_{k=1}^{\lfloor{th_n^{-1}}\rfloor}\E\left[\zeta_k^n(W_{kh_n}-W_{(k-1)h_n})\big|\mathcal{G}_{(k-1)h_n}\right]\stackrel{\P}{\rightarrow} 0\,,\end{align}
where $W$ is the Brownian motion driving the signal process $X$, and if
\begin{align}\label{s5}\tag{J5}\sum_{k=1}^{\lfloor{th_n^{-1}}\rfloor}\E\left[\zeta_k^n(N_{kh_n}-N_{(k-1)h_n})\big|\mathcal{G}_{(k-1)h_n}\right]\stackrel{\P}{\rightarrow} 0\,,\end{align}
for all bounded $\mathcal{G}_t$-martingales $N$ which are orthogonal to $W$. 
\end{subequations}
Next, we strive for a stable functional CLT for the estimation errors of the covolatility estimator \eqref{specv} and the local method of moments approach \eqref{lmm}. A non-degenerate asymptotic variance is obtained when $n/n_l\rightarrow \nu_l$ with $0<\nu_l<\infty$ as $n\rightarrow\infty$ for all $l=1,\dots,d$. We transform the non-synchronous observation model from Assumption \ref{noise2} to a synchronous observation model and show that the first order asymptotics of the considered estimators remain invariant. Hence, the effect of non-synchronous sampling on the spectral estimators is shown to be asymptotically negligible. In the idealized martingale framework \cite{BHMR} have found that non-synchronicity effects are asymptotically immaterial in terms of the information content of underlying experiments by a (strong) asymptotic equivalence in the sense of Le Cam of the discrete non-synchronous and a continuous-time observation model. This constitutes a fundamental difference to the non-noisy case where the asymptotic variance of the prominent Hayashi-Yoshida estimator in the functional CLT hinges on interpolation effects, see \cite{hy3}. In the presence of the dominant noise part, however, at the slower optimal convergence rate, the influence of sampling schemes boils down to local observation densities. These time-varying local observation densities are shifted to locally time-varying noise levels (indeed locally increased noise is equivalent to locally less frequent observations). Here, we shall explicitly prove that if we pass from a non-synchronous to a synchronous reference scheme the transformation errors of the estimators are asymptotically negligible.
\begin{lem}\label{key}Denote $\bar t_i^{(l)}=\big(t_{i}^{(l)}+t_{i-1}^{(l)}\big)/2, l=1,\ldots,d$. On Assumptions \ref{Hd} and \ref{noise2}, we can work under synchronous sampling when considering the signal part $X$, i.e. we have for $l,m=1,\dots,d$ uniformly in $t$ for both, $w_{jk}^{l,m}$ as in Section \ref{sec:3.3} or defined as entries of \eqref{weightsmv}:
\begin{align*}&\sum_{k=1}^{\lfloor th_n^{-1}\rfloor}h_n\sum_{j\ge 1}w_{jk}^{l,m}\sum_{v=1}^{n_l}\Big(X_{t_v^{(l)}}^{(l)}-X_{t_{v-1}^{(l)}}^{(l)}\Big)\Phi_{jk}(\bar t_v^{(l)})\sum_{i=1}^{n_m}\Big(X_{t_i^{(m)}}^{(m)}-X_{t_{i-1}^{(m)}}^{(m)}\Big)\Phi_{jk}(\bar t_i^{(m)})+\KLEINO_{\P}(n^{-1/4})\\
 & =\sum_{k=1}^{\lfloor th_n^{-1}\rfloor}h_n\sum_{j\ge 1}w_{jk}^{l,m}\sum_{v=1}^{n_l}\Big(X_{t_v^{(l)}}^{(l)}-X_{t_{v-1}^{(l)}}^{(l)}\Big)\Phi_{jk}(\bar t_v^{(l)})\sum_{i=1}^{n_l}\Big(X_{t_i^{(l)}}^{(m)}-X_{t_{i-1}^{(l)}}^{(m)}\Big)\Phi_{jk}(\bar t_i^{(l)})\,.\end{align*}
\end{lem}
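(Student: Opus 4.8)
The plan is to bound $\E\,\sup_{t\in[0,1]}|D_{n,t}|$, where $D_{n,t}$ denotes the difference of the two sums in the assertion, and to conclude by Markov's inequality that $\sup_t|D_{n,t}|=\KLEINO_\P(n^{-1/4})$ (in fact one obtains the stronger $\mathcal{O}_\P(n^{-1/2})$). Write $\bar\Phi_{jk}^{(l)}(s)=\sum_{v=1}^{n_l}\Phi_{jk}(\bar t_v^{(l)})\,\I_{(t_{v-1}^{(l)},t_v^{(l)}]}(s)$ for the piecewise constant interpolation of the continuous, piecewise smooth function $\Phi_{jk}$ along the $l$-th grid, and $\bar\Phi_{jk}^{(m)}$ analogously along the $m$-th grid. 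Then $\sum_{v=1}^{n_l}\big(X^{(l)}_{t_v^{(l)}}-X^{(l)}_{t_{v-1}^{(l)}}\big)\Phi_{jk}(\bar t_v^{(l)})=\int_0^1\bar\Phi_{jk}^{(l)}(s)\,dX_s^{(l)}$, and likewise for $X^{(m)}$, so the only factor that changes when the $m$-grid is replaced by the $l$-grid is $\int_0^1(\bar\Phi_{jk}^{(m)}(s)-\bar\Phi_{jk}^{(l)}(s))\,dX_s^{(m)}$, hence
\begin{align*}
D_{n,t}=\sum_{k=1}^{\f{th_n^{-1}}}h_n\sum_{j\ge1}w_{jk}^{l,m}\Big(\int_0^1\bar\Phi_{jk}^{(l)}\,dX^{(l)}\Big)\Big(\int_0^1\big(\bar\Phi_{jk}^{(m)}-\bar\Phi_{jk}^{(l)}\big)\,dX^{(m)}\Big)\,.
\end{align*}
As $\sup_t|D_{n,t}|$ is dominated by the sum over $k$ of the absolute values of the summands, it suffices to bound that sum in $L^1$.

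The argument rests on two estimates. First, $\Phi_{jk}$ is Lipschitz with $\|\Phi_{jk}'\|_\infty\lesssim j\,h_n^{-3/2}$, and both grids have mesh $\lesssim n^{-1}$ since the $F_l'$ are bounded away from zero; therefore $\|\bar\Phi_{jk}^{(m)}-\bar\Phi_{jk}^{(l)}\|_\infty\lesssim(j\,h_n^{-3/2}n^{-1})\wedge h_n^{-1/2}$, the second term being the crude bound from $\|\Phi_{jk}\|_\infty\lesssim h_n^{-1/2}$. Being supported on a single bin of length $\asymp h_n$, this difference has $L^2(ds)$-norm $\lesssim(j\,h_n^{-1}n^{-1})\wedge1$, and by It\^o's isometry together with a pathwise bound for the drift part (both $\sigma$ and $b$ are locally bounded) the same bound holds for $\big\|\int_0^1(\bar\Phi_{jk}^{(m)}-\bar\Phi_{jk}^{(l)})\,dX^{(m)}\big\|_{L^2(\P)}$. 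Second, $\|\bar\Phi_{jk}^{(l)}\|_{L^2(ds)}\lesssim1$ uniformly in $j,k$ (again from $\|\Phi_{jk}\|_\infty\lesssim h_n^{-1/2}$ on a support of length $\asymp h_n$), so $\big\|\int_0^1\bar\Phi_{jk}^{(l)}\,dX^{(l)}\big\|_{L^2(\P)}\lesssim1$. All locally bounded quantities, and the resulting bounds on the oracle weights, are treated by the standard localization with stopping times $T_n\uparrow\infty$, which is harmless for a convergence-in-probability statement.

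By the Cauchy--Schwarz inequality, each summand of $D_{n,t}$ is then controlled, on the localizing set, by
\begin{align*}
\E\,\sup_t|D_{n,t}|\lesssim\sum_{k=1}^{h_n^{-1}}h_n\sum_{j\ge1}|w_{jk}^{l,m}|\,\big((j\,h_n^{-1}n^{-1})\wedge1\big)\,.
\end{align*}
With the bin width $h_n\asymp n^{-1/2}\log n$ one has $H_l^{kh_n}[\varphi_{jk},\varphi_{jk}]\asymp j^2(\log n)^{-2}$, so for each of the three weight families \eqref{eq:-4} and \eqref{weightsmv} provide, on the localizing set and uniformly in $k$, the deterministic envelope $|w_{jk}^{l,m}|\lesssim(1+j^2(\log n)^{-2})^{-2}/\log n$; consequently $\sum_{j\ge1}|w_{jk}^{l,m}|\,j\lesssim\log n$, while the high frequencies $j\gtrsim nh_n$, where the truncation by $1$ is active, contribute only $\mathcal{O}(n^{-3/2})$. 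Hence $\sum_{j\ge1}|w_{jk}^{l,m}|\big((j\,h_n^{-1}n^{-1})\wedge1\big)\lesssim h_n^{-1}n^{-1}\log n$, and since $\sum_{k=1}^{h_n^{-1}}h_n=1$ this gives $\E\sup_t|D_{n,t}|\lesssim h_n^{-1}n^{-1}\log n\asymp n^{-1/2}$; Markov's inequality and the localization then finish the proof.

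I expect the main obstacle to be the uniform weight bound $\sum_{j\ge1}|w_{jk}^{l,m}|\,j\lesssim\log n$: it hinges on the precise $j$-decay of the Fisher-information weights and on the logarithmic factor in the bin width, the same mechanism that governs the sum of conditional variances and yields the explicit asymptotic variances in Theorems \ref{thm:2} and \ref{thm:3}. Everything else is routine, apart from a short bookkeeping of the observation intervals straddling a bin boundary $kh_n$, where $\Phi_{jk}$ vanishes and all the preceding estimates only improve.
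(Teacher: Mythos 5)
Your argument is correct, and it reaches the paper's conclusion (indeed with the slightly sharper rate $\mathcal{O}_{\P}(n^{-1/2})$ instead of the paper's $\mathcal{O}_{\P}(h_n)$, both being $\KLEINO_{\P}(n^{-1/4})$). The route differs from the paper's mainly in the decomposition: the paper regroups the increments of $X^{(m)}$ relative to the $l$-grid via next-tick/previous-tick interpolation functions, with separate treatment of interior, border and empty intervals, and then controls the midpoint shifts of $\Phi_{jk}$ through the expansion \eqref{Phiexp}; you instead write both double sums as stochastic integrals of piecewise-constant interpolants of $\Phi_{jk}$ along the two grids, so that only the $X^{(m)}$-factor changes, and you bound the difference by It\^{o} isometry plus Cauchy--Schwarz. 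The quantitative inputs are the same in substance -- the Lipschitz bound $\|\Phi_{jk}'\|_{\infty}\lesssim jh_n^{-3/2}$ is exactly \eqref{Phiexp}, the moment bounds play the role of \eqref{e1}--\eqref{e2}, and the per-bin error $\lesssim\sum_j |w_{jk}^{l,m}|\,j/(nh_n)$ is precisely the paper's intermediate bound -- but your representation buys a cleaner bookkeeping: the interlacing of the two grids (in particular the case of $l$-intervals containing no $m$-observation, which the paper handles by an ad hoc union of intervals) disappears, since the pointwise comparison of the interpolants only uses that each grid has mesh $\mathcal{O}(n^{-1})$. One remark: the refinement you single out as the main obstacle, namely the envelope $|w_{jk}^{l,m}|\lesssim(1+j^2(\log n)^{-2})^{-2}/\log n$ incorporating $I_k^{-1}\asymp(\sqrt{n}h_n)^{-1}$, is not actually needed; the cruder bound \eqref{orderweights}, which is what the paper uses, already gives $\sum_j|w_{jk}^{l,m}|\,j/(nh_n)\lesssim h_n$ and hence $\KLEINO_{\P}(n^{-1/4})$, so your proof is robust even without that extra logarithmic gain.
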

Note that $(F_l^{-1})^{\prime}, (F_m^{-1})^{\prime}$ affect the asymptotics of our estimators as can be seen in \eqref{avarspecv}, but are treated as part of the summands due to noise.\\ 
Under a synchronous reference observation scheme the strategy of the asymptotic analysis is similar to the one-dimensional setup. Analogous decompositions in leading terms from the simplified model without drift and with a locally constant covolatility matrix and remainders are considered for the multivariate method of moments estimator \eqref{lmm} and the spectral covolatility estimator \eqref{specv}. In order to prove Theorem \ref{thm:2} for instance, we apply Jacod's limit theorem to the sum of increments
 \Rechts\begin{align}\label{illustrbiv}\zeta_k^n=n^{1/4}h_n\sum_{j\ge 1}\Big( w_{jk}^{p,q}\tilde S_{jk}^{(p)}\tilde S_{jk}^{(q)}-\E\left[\tilde S_{jk}^{(p)}\tilde S_{jk}^{(q)}\Big|\mathcal{G}_{(k-1)h_n}\right]\Big),\end{align}
for $k=1,\ldots,h_n^{-1}$ with $\tilde S_{jk}^{(p)}$ as defined in \eqref{specavg}, but based on observations of $\tilde X+\epsilon$. By including the case $p=q$ with a bias correction the one-dimensional result is generalized to non-equidistant sampling.
\section{Simulations\label{sec:5}}
In the sequel, the one-dimensional spectral integrated volatility estimator's \eqref{spev} finite sample performance is investigated in a random volatility simulation scenario. We sample regular observations $Y_1,\ldots,Y_n$ as in \eqref{observ} with $\epsilon_i\stackrel{iid}{\sim}N(0,\eta^2)$ and the simulated diffusion
\begin{align*}X_t=\int_0^t b\,ds+\int_0^t\sigma_s\,dW_s\,.\end{align*}
In a first baseline scenario configuration we set $\sigma_s=1$ constant. In a second more realistic scenario we consider
\begin{align}\label{sigmasim}\sigma_t^2=\left(\int_0^t\tilde\sigma\cdot\lambda \,dW_s+\int_0^t \sqrt{1-\lambda^2}\cdot\tilde\sigma \,dW_s^{\bot}\right)\cdot f(t)\,,\end{align}
with $W^{\bot}$ a standard Brownian motion independent of $W$ and $f$ a deterministic seasonality function
\begin{align*}f(t)=0.1(1-t^{\frac13}+0.5\cdot t^2)\,,\end{align*}
such that $\sigma_0^2=0.1$. The drift is set $b=0.1$ and $\tilde\sigma=0.01$.
\setlength{\tabcolsep}{1em}
\renewcommand{\arraystretch}{1.5}
\begin{table}[t]
\centering
\begin{tabular}{|ccccc|cc|}
\hline
$n$&$\sigma$&$h_n^{-1}$&$\eta$&$\lambda$&RE($\IV_{n,1}^{or}$)&RE($\IV_{n,1}$)\\
\hline
$30000$&$1$&$25$&$0.01$&--&$1.01$ &$1.43$ \\
$5000$&$1$&$25$&$0.01$&--&$1.02$ &$1.47$ \\
$30000$&Eq.\,\eqref{sigmasim}&$25$&$0.01$&$0.5$& $1.09$& $1.75$\\
$30000$&Eq.\,\eqref{sigmasim}&$25$&$0.01$&$0.2$& $1.06$& $1.77$\\
$30000$&Eq.\,\eqref{sigmasim}&$25$&$0.01$&$0.8$&$1.09$ &$1.75$ \\
$30000$&Eq.\,\eqref{sigmasim}&$25$&$0.001$&$0.5$&$1.62$ & $1.88$\\
$30000$&Eq.\,\eqref{sigmasim}&$25$&$0.1$&$0.5$& $1.20$ & $1.69$\\
$30000$&Eq.\,\eqref{sigmasim}&$50$&$0.01$&$0.5$& $1.09$& $1.84$\\
$30000$&Eq.\,\eqref{sigmasim}&$10$&$0.01$&$0.5$& $1.16$&$1.86$ \\
$5000$&Eq.\,\eqref{sigmasim}&$25$&$0.01$&$0.5$&$1.13$ & $1.92$\\
$5000$&Eq.\,\eqref{sigmasim}&$50$&$0.01$&$0.5$&$1.08$ & $1.75$\\
$5000$&Eq.\,\eqref{sigmasim}&$10$&$0.01$&$0.5$& $1.09$ & $1.87$\\
\hline
\end{tabular}
\caption{\label{tab1}Relative Efficiencies (RE) of oracle and adaptive spectral integrated volatility estimator in finite-sample Monte Carlo study.}
\end{table}\noindent
\hspace*{-.5cm}The superposition of a continuous semimartingale as random component with a time-varying seasonality modeling volatility's typical U-shape mimics very general realistic volatility characteristics.
We implement the oracle version of the estimator \eqref{spev} and the adaptive two-stage procedure with pre-estimated optimal weights. Table \ref{tab1} presents Monte Carlo results for different scenario configurations. In particular, we consider different tuning parameters (bin-widths) and possible dependence of the finite-sample behavior on the leverage magnitude and the magnitude of the noise variance. We compute the estimators' root mean square errors (RMSE) at $t=1$, for each configuration based on 1000 Monte Carlo iterations, and fix in each configuration one realization of a volatility path to compare the RMSEs to the theoretical asymptotic counterparts in the realized relative efficiency (RE):
\begin{align}\text{RE}(\IV_{n,1})=\frac{\sqrt{\left((\text{mean}(\IV_{n,1})-\int_0^1\sigma_s^2\,ds)^2+\var(\IV_{n,1})\right)\cdot \sqrt{n}}}{\sqrt{8\eta\int_0^1\sigma_s^3\,ds}}\,.\end{align}
Our standard sample size is $n=30000$, a realistic number of observations in usual high-frequency applications as number of ticks over one trading day for liquid assets at NASDAQ. We also focus on smaller samples, $n=5000$.\\
Throughout all simulations we fix a maximum spectral cut-off $J=100$ in the pre-estimation step and $J=150$ for the final estimator, which is large enough to render the approximation error by neglecting higher frequencies negligible. In summary, the Monte Carlo study confirms that the estimator performs well in practice and the Monte Carlo variances come very close to the theoretical lower bound, even in the complex wiggly volatility setting. The fully adaptive approach performs less well than the oracle estimator which is in light of previous results on related estimation approaches not surprising, see e.g.\,\cite{bibingerreiss} for a study including an adaptive multi-scale estimator (global smoothing parameter, but chosen data-driven). Still the adaptive estimator's performance is remarkably well in almost all configurations. Under very small noise level, the relative efficiency is not as close to 1 any more. Apart from this case, the RE comes very close to 1 for the oracle estimator, not depending on the magnitude of leverage, also for small samples, and being very robust with respect to different bin-widths.\\
A simulation study of the multivariate method of moments estimator in a random volatility setup can be found in \cite{BHMR}.

\section{Proofs\label{sec:6}}
\subsection{{\bf{Preliminaries\label{sec:6.1}}}}
\begin{itemize}
\item {{\bf{\it{Empirical scalar products:}}}}\\
\begin{defi}\label{scalar}
Let $f,g:\left[0,1\right]\rightarrow\R$ be functions and $z=(z_i)_{1\le i\le n}\in\R^{n}$.
We call the quantities 
\vspace*{-.4cm}
\begin{align*}
\left\langle f,g\right\rangle _{n} & =  \frac{1}{n}\sum_{i=1}^{n}f\left(\frac{i}{n}\right)g\left(\frac{i}{n}\right)\,,\\
\left\langle z,g\right\rangle _{n} & =  \frac{1}{n}\sum_{i=1}^{n}z_{i}\,g\left(\frac{i}{n}\right)\,,
\end{align*}
the \emph{empirical scalar product} of $f$, $g$ and of $z$, $g$,
respectively. We further define the ``shifted'' empirical scalar products
\begin{align*}
\left[f,g\right]_{n} & =  \frac{1}{n}\sum_{i=1}^{n}f\left(\frac{i-\frac{1}{2}}{n}\right)g\left(\frac{i-\frac{1}{2}}{n}\right),\\
\left[z,g\right]_{n} & =  \frac{1}{n}\sum_{i=1}^{n}z_{i}\,g\left(\frac{i-\frac{1}{2}}{n}\right).
\end{align*}
\end{defi}
Recall the notation $\Delta^n Y=\big(\Delta_i^n Y\big)_{1\le i\le n}\in \mathds{R}^n$, the vector of increments and analogously $\Delta^n X$ and let $\epsilon=(\epsilon_{i/n})_{0\le i\le (n-1)}$.\\
To lighten notation, we presume for the following that in the one-dimensional equidistant observations setup we have on each bin $[(k-1)h_n,kh_n],k=1,\ldots,h_n^{-1}$ the same number $\lfloor n h_n\rfloor=nh_n\in\N$ of observations. If $nh_n\notin \N$, either $\lfloor nh_n\rfloor$ or $\lfloor nh_n\rfloor+1$ observations lie in a bin. By cutting-off a small interval at the end, however, we can obtain $h_n^{-1}$ equidistant bins with equal numbers of observations per bin covering $[0,\lfloor nh_n\rfloor/(nh_n)]$. The error by excluding $(\lfloor nh_n\rfloor/(nh_n) ,1]$ is asymptotically negligible. All proofs are easily generalized for arbitrary $n,h_n^{-1}\in \N$ using the exact construction introduced in Section \ref{sec:3}.\footnotemark[3]\footnotetext[3]{In case that $nh_n\notin\N$ and for the original partition in bins and basis functions \eqref{phi}, \eqref{varphi}, the discrete Fourier identities \eqref{or1} and \eqref{or2} hold with a remainder of order $(nh_n)^{-1}$ which is asymptotically negligible. For a given observation scheme it is possible to preserve exact Fourier identities redefining basis functions \eqref{phi}, \eqref{varphi} by bin-wise multiplication with $(\#\{i|i/n \in[(k-1)h_n,kh_n]\}/(nh_n))^{-1/2}$.}
\begin{lem}Suppose that $nh_n\in\N$.\footnotemark[3] It holds that
\begin{subequations}
\begin{align}\label{or1}\left\langle \Phi_{jk},\Phi_{mk}\right\rangle _{n}&=\delta_{jm}\,,\\
\label{or2}\left[\varphi_{jk},\varphi_{mk}\right]_{n}&=\delta_{jm}4n^{2}\sin^{2}\left(\frac{j\pi}{2 nh_n }\right)\,,\\
\left[\varphi_{jk}^{2},\varphi_{mk}^{2}\right]_{n} & =\left(2+\delta_{jm}\right)n^{2}\sin\left(\frac{j\pi}{ nh_{n} }\right)\sin\left(\frac{m\pi}{ nh_{n}}\right)\,.\label{eq:or3}\end{align}
Furthermore, we have the summation by parts decomposition of spectral statistics:
\begin{align}\label{sbp}\left\langle n\triangle^{n}Y,\Phi_{jk}\right\rangle _{n}=\left\langle n\triangle^{n}X,\Phi_{jk}\right\rangle _{n}-\left[\epsilon,\varphi_{jk}\right]_{n}.\end{align}
\end{subequations}
\end{lem}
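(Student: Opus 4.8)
The plan is to verify the three Fourier identities \eqref{or1}--\eqref{eq:or3} by direct trigonometric computation, and then to obtain \eqref{sbp} by summation by parts. For \eqref{or1}, I would substitute the definition \eqref{phi}, so that $\langle\Phi_{jk},\Phi_{mk}\rangle_n = \tfrac{2}{nh_n}\sum_{i=1}^{nh_n}\sin(j\pi i/(nh_n))\sin(m\pi i/(nh_n))$ after translating the bin $[(k-1)h_n,kh_n]$ to $[0,h_n]$ and relabeling the $nh_n$ grid points inside it. The key is the classical discrete orthogonality of the sine system on $nh_n$ equidistant points: writing the product of sines as $\tfrac12(\cos((j-m)\pi i/(nh_n)) - \cos((j+m)\pi i/(nh_n)))$ and summing the geometric series $\sum_i e^{\mathrm{i}\theta i}$ kills both cosine sums unless $j=m$ (using that $0<j,m\le nh_n-1$ so $j\pm m$ is never an even multiple of $nh_n$ except in the degenerate case), leaving exactly the normalization $\delta_{jm}$. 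Identity \eqref{or2} is the same computation applied to \eqref{varphi}: after pulling out the constant $(2n\sqrt{2/h_n}\sin(j\pi/(2nh_n)))^2 = 4n^2\sin^2(j\pi/(2nh_n)) \cdot \tfrac{2}{h_n} \cdot \tfrac{1}{4n^2\sin^2(\cdot)}$... more simply, $[\varphi_{jk},\varphi_{mk}]_n = \tfrac{1}{n}\sum_i \varphi_j((i-\tfrac12)/n)\varphi_m((i-\tfrac12)/n)$, and the cosines $\cos(j\pi h_n^{-1}(i-\tfrac12)/n)$ evaluated at the half-integer grid again satisfy discrete orthogonality, producing $\delta_{jm}$ times $(2n\sqrt{2/h_n}\sin(j\pi/(2nh_n)))^2 \cdot \tfrac{h_n}{2} = 4n^2\sin^2(j\pi/(2nh_n))$.

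For \eqref{eq:or3} the new ingredient is that one squares $\varphi$, which turns $\cos^2$ into $\tfrac12(1+\cos(2\cdot))$; expanding $\varphi_{jk}^2\varphi_{mk}^2$ gives a product of four cosines, and after using product-to-sum formulas one is left with discrete sums of $\cos$ at frequencies $2(j\pm m)$ and $2j, 2m$ over the half-integer grid. The surviving (non-vanishing) terms are the constant terms, which come with combinatorial multiplicity $2$ generically and $3$ when $j=m$ (the extra contribution arising because $\cos(2(j-m)\pi h_n^{-1}t)\equiv 1$ when $j=m$), which is the source of the factor $(2+\delta_{jm})$; the remaining bookkeeping is matching the prefactors $(2n\sqrt{2/h_n})^2\sin(\cdot)\sin(\cdot)$ against the claimed $n^2\sin(j\pi/(nh_n))\sin(m\pi/(nh_n))$, where the half-angle identity $2\sin(\theta/2)\cos(\theta/2)=\sin\theta$ converts the $\sin(j\pi/(2nh_n))$ appearing in $\varphi$ into $\sin(j\pi/(nh_n))$. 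I expect this identity to be the most error-prone step, purely because of the number of trigonometric terms to track, but it is entirely mechanical.

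Finally, \eqref{sbp} follows from Abel summation. Write $\langle n\Delta^n Y,\Phi_{jk}\rangle_n = \sum_{i=1}^n (Y_{i/n}-Y_{(i-1)/n})\Phi_{jk}(i/n)$ and substitute $Y_{i/n}=X_{i/n}+\epsilon_{i/n}$; the $X$-part is $\langle n\Delta^n X,\Phi_{jk}\rangle_n$ verbatim, and for the noise part, summation by parts moves the difference from $\epsilon$ onto $\Phi_{jk}$, i.e. $\sum_i (\epsilon_{i/n}-\epsilon_{(i-1)/n})\Phi_{jk}(i/n) = -\sum_i \epsilon_{(i-1)/n}(\Phi_{jk}(i/n)-\Phi_{jk}((i-1)/n)) + \text{boundary}$, and the boundary terms vanish because $\Phi_{jk}$ is supported on $[(k-1)h_n,kh_n]$ and vanishes at the bin endpoints (by the $\sin$ factor in \eqref{phi}). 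The first-order difference $\Phi_{jk}(i/n)-\Phi_{jk}((i-1)/n)$ is, up to $O(n^{-2})$ corrections absorbed into the exact discrete construction, $n^{-1}\varphi_{jk}((i-\tfrac12)/n)$ — indeed $\varphi$ in \eqref{varphi} is built precisely so that $\sqrt{2/h_n}(\sin(j\pi h_n^{-1}i/n)-\sin(j\pi h_n^{-1}(i-1)/n)) = \tfrac1n \cdot 2n\sqrt{2/h_n}\sin(j\pi/(2nh_n))\cos(j\pi h_n^{-1}(i-\tfrac12)/n)$ by the sum-to-product identity for a difference of sines — so the noise part equals $-[\epsilon,\varphi_{jk}]_n$ with $\epsilon$ indexed as in the statement. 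The only subtlety is the off-by-one in indexing $\epsilon$ versus the half-integer argument of $\varphi$, which matches the convention $\epsilon=(\epsilon_{i/n})_{0\le i\le n-1}$ fixed just before the lemma.
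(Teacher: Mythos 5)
Your proposal is correct and takes essentially the same route as the paper: direct verification of the discrete orthogonality relations by product-to-sum formulas (you kill the oscillatory cosine sums by summing a geometric series of complex exponentials, where the paper argues instead by parity/symmetry case distinctions on $N$ and the frequency --- a purely cosmetic difference), with the same treatment of \eqref{eq:or3} via $\cos^{2}x\cos^{2}y=\tfrac14(\cos 2x+1)(\cos 2y+1)$. For \eqref{sbp} you use exactly the paper's argument: Abel summation, vanishing of $\Phi_{jk}$ at the bin endpoints, and the identity $\sin(x+h)-\sin(x)=2\sin(h/2)\cos(x+h/2)$ identifying the first-order difference of $\Phi_{jk}$ with $n^{-1}\varphi_{jk}$ at the half-integer grid.
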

\begin{proof}
The proofs of the orthogonality relations \eqref{or1} and \eqref{or2} are similar and we restrict ourselves to prove \eqref {or2}. In the following we use the shortcut $N=\lfloor nh_n \rfloor$ and without loss of generality we consider the first bin $k=1$. We make use of the trigonometric addition formulas which yield for $N \ge j\ge r\ge 1$:
\[\cos(j\pi N^{-1}(l+\tfrac12))\cos(r\pi N^{-1}(l+\tfrac12))=\cos((j+r)\pi N^{-1} (l+\tfrac12))+\cos((j-r)\pi N^{-1}(l+\tfrac12))\,.\]
We show that $\sum_{i=0}^{N-1}\cos(m\pi N^{-1}(i+\tfrac12))=0$ for $m\in\mathds{N}$. First, consider $m$ odd:
\begin{align*}\sum_{i=0}^{N-1}\cos(m\pi N^{-1}(i+\tfrac12))&=\sum_{i=0}^{\lfloor (N-2)/2\rfloor}\cos(m\pi N^{-1}(i+\tfrac12))+\sum_{i=\lceil N/2\rceil}^{N-1}\cos(m\pi N^{-1}(i+\tfrac12))\\
&=\sum_{i=0}^{\lfloor (N-2)/2\rfloor}\Big(\cos(m\pi N^{-1}(i+\tfrac12))+\cos(m\pi N^{-1}(N-(i+\tfrac12)))\Big)\\
&=0,
\end{align*}
since $\cos(x+\pi m)=-\cos(x)$ for $m$ odd. Note that for $i=(N-1)/2\in\mathds{N}$, we leave out one addend which equals $\cos(m\pi/2)=0$, and also that for $m$ even by $\cos(x)=\cos(x+m\pi)$ the two sums are equal. Since $\cos(0)=1$, this also implies the empirical norm for $j=r$.\\
For $m\in\mathds{ N}$ with $m$ even, we differentiate the cases $N=4k,k\in\mathds{N};\,N=4k+2,k\in\mathds{N}$ and $N=2k+1,k\in\mathds{N}$. If $N=4k+2$, we decompose the sum as follows:
\[\sum_{i=0}^{N-1}\cos(m\pi N^{-1}(i+\tfrac12))=\sum_{i=0}^{2k} \cos(m\pi (4k+2)^{-1}(i+\tfrac12))+\sum_{i=2k+1}^{4k+1}\cos(m\pi (4k+2)^{-1}(i+\tfrac12))\,.\]
The addends of the left-hand sum are symmetric around the point $m\pi/4$ at $i=k$ and of the right-hand sum around $3m\pi/4$ at $i=3k+1$. Thereby, both sums equal zero by symmetry. More precisely, for $m$ being not a multiple of $4$ the sums directly yield zero. If $m$ is a multiple of 4, we can split the sum into two or more sums which then equal zero again.\\
This observation for the first sum readily implies $\sum_{i=0}^{N-1}\cos(m\pi N^{-1}(i+\tfrac12))=0$ for $N=2k+1$, since in this case
\[\sum_{i=0}^{2k}\cos(m\pi N^{-1}(i+\tfrac12))=\sum_{i=1}^{2k}\cos(2m\pi(4k+2)^{-1}(i+\tfrac12))=0\,.\]
For $N=4k$, we may as well exploit symmetry relations of the cosine. Decompose the sum
\[\sum_{i=0}^{N-1}\cos(m\pi N^{-1}(i+\tfrac12))=\sum_{i=0}^{2k-1}\cos(m\pi (4k)^{-1}(i+\tfrac12))+\sum_{i=2k}^{4k-1}\cos(m\pi (4k)^{-1}(i+\tfrac12))\,.\]
Symmetry around $m\pi/4$ and $3m\pi/4$ is similar as above, but these points lie off the discrete grid this time. Yet, analogous reasoning as above yields that both sums equal zero again, what completes the proof of \eqref{or2}. Likewise and using
\begin{align*}
\cos^{2}\left(x\right)\cos^{2}\left(y\right) & =\frac{1}{4}\left(\cos\left(2x\right)+1\right)\left(\cos\left(2y\right)+1\right)\\
 & =\frac{1}{4}\left(\frac{1}{2}\cos\left(2\left(x+y\right)\right)+\frac{1}{2}\cos\left(2\left(x-y\right)\right)+\cos\left(2x\right)+\cos\left(2y\right)+1\right)\,,
\end{align*}
we deduce relation \eqref{eq:or3}.\\
Finally, we show \eqref{sbp}. Applying summation by parts to $\left\langle n\triangle^{n}\epsilon,\Phi_{jk}\right\rangle _{n}$ and using $\Phi_{jk}(1)=\Phi_{jk}(0)=0$, yields
\begin{eqnarray*}
\left\langle n\triangle^{n}\epsilon,\Phi_{jk}\right\rangle _{n} & = & \sum_{l=1}^{n}\triangle_{l}^{n}\epsilon\,\Phi_{jk}\left(\frac{l}{n}\right)= -\sum_{l=1}^{n}\epsilon_{\frac{l-1}{n}}\left(\Phi_{jk}\left(\frac{l}{n}\right)-\Phi_{jk}\left(\frac{l-1}{n}\right)\right)\,.
\end{eqnarray*}
The equality \(\sin\left(x+h\right)-\sin\left(x\right)=2\sin\left(\frac{h}{2}\right)\cos\left(x+\frac{h}{2}\right)\) for $x,h\in\mathds{R}$ gives
\[
\Phi_{jk}\left(\frac{l}{n}\right)-\Phi_{jk}\left(\frac{l-1}{n}\right)=\frac{1}{n}\varphi_{jk}\left(\frac{l-\frac{1}{2}}{n}\right)
\]
what yields the claim.
\end{proof}

\item {{\bf{\it{Extending local to uniform boundedness:}}}}\\
On the compact time span $[0,1]$, we can strengthen the structural Assumption \ref{H1} and assume $b_s$, $\sigma_s$, $\sigma_s^{-1}$ and the characteristics of $\sigma_s$ in the semimartingale case are uniformly bounded. This is based on the localization procedure given in \cite{jacodlecture}, Lemma 6.\,6 in Section 6.\,3.
\item {{\bf{\it{Basic estimates for drift and Brownian terms: }}}}
For all $p\ge 1$ and $s,(s+t)\in[(k-1)h_n,kh_n]$ for some $k=1,\ldots,h_n^{-1}$:
\begin{subequations}
\begin{align}\label{e1}\E\left[\|\tilde X_{s+t}-\tilde X_s\|^p\big|\mathcal{G}_s\right]\le K_p t^{\frac{p}{2}}\,,\end{align}
with $\tilde X$ from \eqref{tildeX}. The approximation error satisfies
\begin{align}\hspace*{-.255cm}\E\left[\|X_{s+t}-\tilde X_{s+t}-X_s+\tilde X_s\|^p\big|\mathcal{G}_s\right]\hspace*{-.05cm} &\le \hspace*{-.05cm} K_p\, \notag \E\left[\left(\int_s^{s+t}\|\sigma_{\tau}-\sigma_{s}\|^2\,d\tau\right)^{\frac{p}{2}}\Big|\mathcal{G}_s\right]\\
&\label{e2}\quad+K_p\,\E\left[\Big\|\int_s^{s+t}b_u\,du\Big\|^p\Big|\mathcal{G}_s\right]\le K_p t^{p}\,,\end{align}
with generic constant $K_p$ depending on $p$ by It\^{o}-isometry, Cauchy-Schwarz and Burkholder-Davis-Gundy inequalities with Assumption \ref{H1} and Assumption \ref{Hd}, respectively.
\end{subequations}
\item {{\bf{\it{Local quadratic variations of time:}}}}
\begin{align}\notag\hspace*{-.3cm}\sum_{(k-1)h_n\le t_i^{(l)}\le kh_n}\hspace*{-.45cm}\big(t_i^{(l)}-t_{i-1}^{(l)}\big)^2&\asymp \hspace*{-.15cm}\sum_{(k-1)h_n\le t_i^{(l)}\le kh_n}\hspace*{-.35cm}(F_l^{-1})'((k-1)h_n)n_l^{-1}\big(t_i^{(l)}-t_{i-1}^{(l)}\big)\\
&\label{qvt}=(F_l^{-1})'((k-1)h_n)n_l^{-1}h_n.\end{align}
The left-hand side is a localized measure of variation in observation times in the vein of the quadratic variation of time by \cite{zhangmykland}. It appears in the variance of the estimator and is used to estimate $(F_l^{-1})^{\prime}((k-1)h_n)$. Under $F_l'\in C^{\alpha}$ with $\alpha>1/2$ the approximation error is $\KLEINO(n^{-1/4})$. The asymptotic identity applies to deterministic observation times in deterministic manner and to random exogenous sampling in terms of convergence in probability.
\item {{\bf{\it{Order of optimal weights:}}}}\\
Recall the definition of the optimal weights \eqref{weights}. An upper bound for these weights is 
\begin{align}\notag w_{jk}\lesssim I_{jk}=\frac12\Big(\sigma^2_{(k-1)h_n}+\frac{\eta^2}{n}\left[\varphi_{jk},\varphi_{jk}\right]_{n}\Big)^{-2}&\lesssim \Big(1+\frac{j^2}{nh_n^2}\Big)^{-2}\\
&\label{orderweights}\lesssim \begin{cases}1~~~~~~~~~~~~~~\,\mbox{for}\,j\le \sqrt{n}h_n\\ j^{-4}n^2h_n^{4}~~\mbox{for}\,j>\sqrt{n}h_n\end{cases}\end{align}
what also gives
\begin{align}\nonumber \sum_{j=1}^{\lfloor nh_n \rfloor-1}\hspace*{-.05cm}w_{jk}\Big(\sigma^2_{(k-1)h_n}+\frac{\eta^2}{n}\left[\varphi_{jk},\varphi_{jk}\right]_{n}\Big) &\lesssim \hspace*{-.05cm}\sum_{j=1}^{\lfloor \sqrt{n} h_n\rfloor}\hspace*{-.1cm}\Big(1+\frac{j^2}{h_n^2 n}\Big)+\hspace*{-.125cm}\sum_{j=\lceil \sqrt{n}h_n\rceil}^{\lfloor nh_n \rfloor-1}\hspace*{-.125cm}\Big(1+\frac{j^2}{nh_n^2}\Big)j^{-4}n^2h_n^{4}\\
& \label{helpweights}\lesssim \sqrt{n}h_n+nh_n^2\,.\end{align}
\end{itemize}

\subsection{{\bf{Proof of Proposition \ref{propmain}}}}
\noindent
Recall the definition of spectral statistics \eqref{spec} and denote for $j=1,\dots,\lfloor nh_n\rfloor-1,k=1,\ldots,h_n^{-1}$:
\[\tilde{S}_{jk}=\left\langle n(\triangle^{n}\tilde{X}+\triangle^{n}\epsilon),\Phi_{jk}\right\rangle _{n}=\left\langle n\Delta^{n}\tilde X,\Phi_{jk}\right\rangle _{n}-\left[\epsilon,\varphi_{jk}\right]_{n}\,,\]
where $\tilde X$ is the signal process in the locally parametric experiment.
It holds that
\begin{eqnarray}
\E\left[\l{\tilde{S}_{jk}^{2}}\c G_{\left(k-1\right)h_n}\right] & = & \E\left[\l{\left(\left\langle n\triangle^{n}\tilde{X},\Phi_{jk}\right\rangle _{n}-\left[\epsilon,\varphi_{jk}\right]_{n}\right)^{2}}\c G_{\left(k-1\right)h_n}\right]\nonumber \\
 & = & \E\left[\l{\left\langle n\triangle^{n}\tilde{X},\Phi_{jk}\right\rangle _{n}^{2}-2\left\langle n\triangle^{n}\tilde{X},\Phi_{jk}\right\rangle _{n}\left[\epsilon,\varphi_{jk}\right]_{n}+\left[\epsilon,\varphi_{jk}\right]_{n}^{2}}\c G_{\left(k-1\right)h_n}\right]\nonumber \\
 & \overset{}{=} & \E\left[\l{\left\langle n\triangle^{n}\tilde{X},\Phi_{jk}\right\rangle _{n}^{2}}\c G_{\left(k-1\right)h_n}\right]+\E\left[\l{\left[\epsilon,\varphi_{jk}\right]_{n}^{2}}\c G_{\left(k-1\right)h_n}\right]\nonumber \\
 & \overset{}{=} & \sigma_{\left(k-1\right)h_n}^{2}+\frac{\eta^{2}}{n}\left[\varphi_{jk},\varphi_{jk}\right]_{n}\,.\label{drift}
\end{eqnarray}
We have defined $\zeta_k^n$ above such that
\begin{eqnarray*}
n^{\frac{1}{4}}\hspace*{-.025cm}\Big(\widetilde{\operatorname{IV}}_{n,t}-h_n\hspace*{-.075cm}\sum_{k=1}^{\lfloor t h_n^{-1}\rfloor }\hspace*{-.1cm}\sigma_{\left(k-1\right)h_n}^{2}\Big)\hspace*{-.05cm}=\hspace*{-.05cm}n^{\frac{1}{4}}h_n\hspace*{-.175cm}\sum_{k=1}^{\f{th_n^{-1}}}\sum_{j=1}^{\lfloor nh_n\rfloor-1}\hspace*{-.2cm}w_{jk}\hspace*{-.025cm}\Big(\tilde{S}_{jk}^{2}\hspace*{-.05cm}-\hspace*{-.05cm}\frac{\eta^{2}}{n}\left[\varphi_{jk},\varphi_{jk}\right]_{n}\hspace*{-.15cm}-\hspace*{-.05cm}\sigma_{\left(k-1\right)h_n}^{2}\Big)\hspace*{-.1cm}=\hspace*{-.15cm}\sum_{k=1}^{\f{th_n^{-1}}}\hspace*{-.2cm}\zeta_{k}^{n}
\end{eqnarray*}
when we shortly express $\widetilde{\operatorname{IV}}_{n,t}=\IV_{n,t}^{or}(\tilde X+\epsilon)$. We have to verify \eqref{s1}-\eqref{s5}. \eqref{s1} is trivial as the $\zeta_{k}^{n}$ are centered
conditional on $\c G_{(k-1)h_n}$. The proof of \eqref{s2} is done in two steps. In paragraph \ref{J2_var} we calculate explicitly the
variance which is the left-hand side of \eqref{s2}. For this we consider at first general
weights $w_{jk}\geq0$, $\sum_{j=1}^{\lfloor nh_{n}\rfloor -1}w_{jk}=1$ which satisfy $w_{jk}\in\c G_{\left(k-1\right)h_n}$ for all $k=1,\dots,h_{n}^{-1},\, j=1,\dots,\lfloor nh_{n}\rfloor -1$.
After that we find optimal weights minimizing the variance. In paragraph \ref{J2_avar} we let $n\rightarrow\infty$ and calculate
the resulting limiting asymptotic variance.
The proofs of \eqref{s3}, \eqref{s4} and \eqref{s5} follow in paragraph \ref{J_3}. 

\subsubsection{\bf{Computation of the variance}\label{J2_var}}
\vspace*{-.1cm}
\begin{eqnarray*}
\E\left[\l{\left(\zeta_{k}^{n}\right)^{2}}\c G_{\left(k-1\right)h_n}\right] & = & n^{\frac{1}{2}}h_n^{2}\sum_{j,m=1}^{\lfloor nh_n\rfloor-1}w_{jk}w_{mk}\,\E\bigg[\left(\tilde{S}_{jk}^{2}-\E\left[\l{\tilde{S}_{jk}^{2}}\c G_{\left(k-1\right)h_n}\right]\right)\\
 &  & \l{\hspace{9em}\cdot\left(\tilde{S}_{mk}^{2}-\E\left[\l{\tilde{S}_{mk}^{2}}\c G_{\left(k-1\right)h_n}\right]\right)}\c G_{\left(k-1\right)h_n}\bigg]\\
 &=& n^{\frac{1}{2}}h_n^{2}\sum_{j,m=1}^{\lfloor nh_n\rfloor-1}w_{jk}w_{mk}\big(T_{j,m,k}^n(1)+T_{j,m,k}^n(2)+T_{j,m,k}^n(3)\big)\,,
\end{eqnarray*}
with the following three addends:
 \begin{align*}
T_{j,m,k}^{n}\left(1\right) & =\E\left[\l{\left(\left\langle n\triangle^{n}\tilde{X},\Phi_{jk}\right\rangle _{n}^{2}-\sigma_{\left(k-1\right)h_{n}}^{2}\right)\left(\left\langle n\triangle^{n}\tilde{X},\Phi_{mk}\right\rangle _{n}^{2}-\sigma_{\left(k-1\right)h_{n}}^{2}\right)}\c G_{\left(k-1\right)h_{n}}\right],\\
T_{j,m,k}^{n}\left(2\right) & =\E\left[\l{4\left\langle n\triangle^{n}\tilde{X},\Phi_{jk}\right\rangle _{n}\left[\epsilon,\varphi_{jk}\right]_{n}\left\langle n\triangle^{n}\tilde{X},\Phi_{mk}\right\rangle _{n}\left[\epsilon,\varphi_{mk}\right]_{n}}\c G_{\left(k-1\right)h_{n}}\right],\\
T_{j,m,k}^{n}\left(3\right) & =\E\left[\l{\left(\left[\epsilon,\varphi_{jk}\right]_{n}^{2}-\frac{\eta^{2}}{n}\left[\varphi_{jk},\varphi_{jk}\right]_{n}\right)\left(\left[\epsilon,\varphi_{mk}\right]_{n}^{2}-\frac{\eta^{2}}{n}\left[\varphi_{mk},\varphi_{mk}\right]_{n}\right)}\c G_{\left(k-1\right)h_{n}}\right],
\end{align*}
for frequencies $j,m$. The i.i.d.\,structure of the noise and of Brownian increments yields
\begin{align*}
\E\left[\left[\epsilon,\varphi_{jk}\right]_{n}\left[\epsilon,\varphi_{mk}\right]_{n}\right] & =\frac{\eta^{2}}{n}\left[\varphi_{jk},\varphi_{mk}\right]_{n},\\
\E\left[\l{\left\langle n\triangle^{n}\tilde{X},\Phi_{jk}\right\rangle _{n}\left\langle n\triangle^{n}\tilde{X},\Phi_{mk}\right\rangle _{n}}\c G_{\left(k-1\right)h_{n}}\right] & =\delta_{jm}\sigma_{\left(k-1\right)h_{n}}^{2},
\end{align*}
which implies with independence of the noise and $X$ that 
\[
T_{j,m,k}^{n}\left(2\right)=4\frac{\eta^{2}}{n}\delta_{jm}\left[\varphi_{jk},\varphi_{mk}\right]_{n}\sigma_{\left(k-1\right)h_n}^{2}\,.
\]
We further obtain by another polynomial expansion
\begin{align*}
 & \E\left[\left[\epsilon,\varphi_{jk}\right]_{n}^{2}\left[\epsilon,\varphi_{mk}\right]_{n}^{2}\right]\\\
 & =n^{-4}\hspace*{-0.1cm}\sum_{l,l',p,p'=1}^{n}\hspace*{-0.1cm}\Big(\E\left[\epsilon_{l}\epsilon_{l'}\epsilon_{p}\epsilon_{p'}\right]\varphi_{jk}\Big(\frac{l-\frac{1}{2}}{n}\Big)\varphi_{jk}\Big(\frac{l'-\frac{1}{2}}{n}\Big)\varphi_{mk}\Big(\frac{p-\frac{1}{2}}{n}\Big)\varphi_{mk}\Big(\frac{p'-\frac{1}{2}}{n}\Big)\Big).\nonumber 
\end{align*}
Only the cases $l=l'\neq p=p'$, $l=p\neq l'=p'$ , $l=p'\neq l'=p$
or $l=l'=p=p'$ produce non-zero results in the expectation. Hence, denoting by $\eta'=\E[\epsilon_{t}^{4}]$ the fourth moment
of the observation errors, we end up with 
\begin{align*}
 & \E\left[\left[\epsilon,\varphi_{jk}\right]_{n}^{2}\left[\epsilon,\varphi_{mk}\right]_{n}^{2}\right] \hspace*{-0.05cm}=\hspace*{-0.05cm}\frac{1}{n^{4}}\sum_{l,l',p,p'}\hspace*{-0.1cm}\left(\eta^{4}\left(\delta_{ll'}\delta_{pp'}+\delta_{lp}\delta_{l'p'}+\delta_{lp'}\delta_{l'p}\right)+\eta'\delta_{lp}\delta_{l'p'}\delta_{ll'}-3\eta^{4}\delta_{lp}\delta_{l'p'}\delta_{ll'}\right)\\
 &\hspace*{5cm} \cdot\Big(\varphi_{jk}\Big(\frac{l-\frac{1}{2}}{n}\Big)\varphi_{jk}\Big(\frac{l'-\frac{1}{2}}{n}\Big)\varphi_{mk}\Big(\frac{p-\frac{1}{2}}{n}\Big)\varphi_{mk}\Big(\frac{p'-\frac{1}{2}}{n}\Big)\Big)\\
 & =\frac{\eta^{4}}{n^{2}}\left(\left[\varphi_{jk},\varphi_{jk}\right]_{n}\left[\varphi_{mk},\varphi_{mk}\right]_{n}+2\left[\varphi_{jk},\varphi_{mk}\right]_{n}^{2}\right) +\frac{\eta'-3\eta^{4}}{n^{4}}\sum_{l=1}^{n}\Big(\varphi_{jk}^{2}\Big(\frac{l-\frac{1}{2}}{n}\Big)\varphi_{mk}^{2}\Big(\frac{l-\frac{1}{2}}{n}\Big)\Big).
\end{align*}
Arguing similarly and using that $\E[(\triangle_{l}^{n}W)^{4}]=3\,\E[(\triangle_{l}^{n}W)^{2}]$
for $l\in\N$, we obtain
\begin{align*}
 & \E\left[\l{\left\langle n\triangle^{n}\tilde{X},\Phi_{jk}\right\rangle _{n}^{2}\left\langle n\triangle^{n}\tilde{X},\Phi_{mk}\right\rangle _{n}^{2}}\c G_{\left(k-1\right)h_{n}}\right]\\
 & =\sigma_{\left(k-1\right)h_{n}}^{4}\left(\langle\Phi_{jk},\Phi_{jk}\rangle_{n}\langle\Phi_{mk},\Phi_{mk}\rangle_{n}+2\langle\Phi_{jk},\Phi_{mk}\rangle_{n}^{2}\right)=\sigma_{\left(k-1\right)h_{n}}^{4}\left(1+2\delta_{jm}\right)\,.
\end{align*}
From the identities so far we obtain
\begin{align*}
T_{j,m,k}^{n}\left(1\right) & =\E\left[\l{\left\langle n\triangle^{n}\tilde{X},\Phi_{jk}\right\rangle _{n}^{2}\left\langle n\triangle^{n}\tilde{X},\Phi_{mk}\right\rangle _{n}^{2}}\c G_{\left(k-1\right)h_{n}}\right]\\
 & -\E\left[\l{\left\langle n\triangle^{n}\tilde{X},\Phi_{jk}\right\rangle _{n}^{2}}\c G_{\left(k-1\right)h_{n}}\right]\E\left[\l{\left\langle n\triangle^{n}\tilde{X},\Phi_{mk}\right\rangle _{n}^{2}}\c G_{\left(k-1\right)h_{n}}\right]\\
 & =\sigma_{\left(k-1\right)h_{n}}^{4}\left(1+2\delta_{mj}\right)-\sigma_{\left(k-1\right)h_{n}}^{4}=2\delta_{jm}\sigma_{\left(k-1\right)h_n}^{4},\\
T_{j,m,k}^{n}\left(3\right) & =\E\left[\left(\left[\epsilon,\varphi_{jk}\right]_{n}^{2}-\frac{\eta^{2}}{n}\left[\varphi_{jk},\varphi_{jk}\right]_{n}\right)\left(\left[\epsilon,\varphi_{mk}\right]_{n}^{2}-\frac{\eta^{2}}{n}\left[\varphi_{mk},\varphi_{mk}\right]_{n}\right)\right]\\
 & =\E\left[\left[\epsilon,\varphi_{jk}\right]_{n}^{2}\left[\epsilon,\varphi_{mk}\right]_{n}^{2}\right]-\frac{\eta^{4}}{n^{2}}\left[\varphi_{jk},\varphi_{jk}\right]_{n}\left[\varphi_{mk},\varphi_{mk}\right]_{n}\\
 & =\frac{2\eta^{4}}{n^{2}}\left[\varphi_{jk},\varphi_{mk}\right]_{n}^{2}+\frac{\eta'-3\eta^{4}}{n^{3}}\left[\varphi_{jk}^{2},\varphi_{mk}^{2}\right]_{n}\,.
\end{align*}
In all, the conditional variance is given by 
\begin{align*}
 & \E\left[\l{\left(\zeta_{k}^{n}\right)^{2}}\c G_{\left(k-1\right)h_{n}}\right]\\
 & =\sqrt{n}h_n^{2}\left(\sum_{j=1}^{\lfloor nh_{n}\rfloor -1}w_{jk}^{2}\left(2\sigma_{\left(k-1\right)h_{n}}^{4}+4\frac{\eta^{2}}{n}\sigma_{\left(k-1\right)h_{n}}^{2}\left[\varphi_{jk},\varphi_{jk}\right]_{n}+\frac{2\eta^{4}}{n^{2}}\left[\varphi_{jk},\varphi_{jk}\right]_{n}^{2}\right)\right)+R_{n}\\
 & =\sqrt{n}h_{n}^{2}\sum_{j=1}^{\lfloor nh_{n}\rfloor -1}w_{jk}^{2}\,2\left(\sigma_{\left(k-1\right)h_{n}}^{2}+\frac{\eta^{2}}{n}\left[\varphi_{jk},\varphi_{jk}\right]_{n}\right)^{2}+R_{n}
\end{align*}
with remainder $R_{n}.$
Observe that $R_{n}=0$ for Gaussian noise. In this case, analogous to \cite{bibingerreiss}, we find that the optimal weights minimizing the variance,
under the constraint $\sum_{j=1}^{\lfloor nh_{n}\rfloor -1}w_{jk}=1$, which ensures
unbiasedness of the estimator, are given by \eqref{weights}. The
optimization can be done with Lagrange multipliers. $R_n$ is then a remainder in case that $\eta'\ne3\eta^{4}$. With the weights
\eqref{weights}, Cauchy-Schwarz and using \eqref{eq:or3} and \eqref{helpweights}, we can bound $R_{n}$
by:
\[
R_{n}\lesssim\frac{\sqrt{n}h_{n}^{2}}{n^{3}}\left(\sum_{j=1}^{\lfloor nh_{n}\rfloor-1}w_{jk}n\left|\sin\left(\frac{j\pi}{nh_{n}}\right)\right|\right)^{2}\leq\frac{h_{n}^{2}}{n}\,.
\]
We therefore obtain
\[
\sum_{k=1}^{\lfloor{th_{n}^{-1}}\rfloor}\E\left[\l{\left(\zeta_{k}^{n}\right)^{2}}\c G_{\left(k-1\right)h_{n}}\right]=\sqrt{n}h_{n}^{2}\sum_{k=1}^{\lfloor{th_{n}^{-1}}\rfloor}\sum_{j=1}^{\lfloor nh_{n}\rfloor -1}(I_{k}^{-2}I_{jk}^{2})I_{jk}^{-1}+\KLEINO(1)=\sqrt{n}h_{n}^{2}\sum_{k=1}^{\lfloor{th_{n}^{-1}}\rfloor}I_{k}^{-1}+\KLEINO(1)
\]
as variance of the estimator.
\subsubsection{\bf{The asymptotic variance of the estimator}\label{J2_avar}}
The key to the asymptotic variance is to recognize 
\[(\sqrt{n}h_{n})^{-1}I_{k}=\frac{1}{\sqrt{n}h_{n}}\sum_{j=1}^{\lfloor nh_{n}\rfloor -1}\frac{1}{2}\left(\sigma_{(k-1)h_{n}}^{2}+\frac{\eta^{2}}{n}\left[\varphi_{jk},\varphi_{jk}\right]_{n}\right)^{-2}\]
as a Riemann sum, ending up with the ``double-Riemann-sum'' $\sum_{k=1}^{\f{th_{n}^{-1}}}h_{n}((\sqrt{n}h_{n})^{-1}I_{k})^{-1}$.
The scaling factor $(\sqrt{n}h_{n})^{-1}$ is the right choice for
the first Riemann sum which becomes clear after two Taylor expansions.
First, expanding the sine for each frequency $j$ we find $0\le\xi_{j}\le j\pi/(2nh_{n})$
with 
\[I_{jk}=\frac{1}{2}\left(\sigma_{(k-1)h_{n}}^{2}+4\eta^{2}n\left(\frac{j\pi}{2nh_{n}}-\frac{\xi_{j}^{3}}{6}\right)^{2}\right)^{-2}.\]
Second, we expand $x\mapsto\frac{1}{2}\left(\sigma_{(k-1)h_{n}}^{2}+4\eta^{2}nx^{2}\right)^{-2}$
which yields $\frac{j\pi}{2nh_{n}}-\frac{\xi_{j}^{3}}{6}\leq\xi_{j}'\leq\frac{j\pi}{2nh_{n}}$
such that 
\begin{align}
I_{jk}=\tilde{I}_{jk}+R_{jk}~~~\mbox{with}~~~R_{jk}=\frac{4\eta^{2}n\xi_{j}'}{(\sigma_{(k-1)h_{n}}^{2}+4\eta^{2}n\xi_{j}'^{2})^{3}}\frac{\xi_{j}^{3}}{6}\,
\end{align}
where we define $\tilde{I}_{jk}=\frac{1}{2}(\sigma_{\left(k-1\right)h_{n}}^{2}+\eta^{2}(\frac{j\pi}{\sqrt{n}h_{n}})^{2})^{-2}$.
Now it becomes clear that $\sqrt{n}h_{n}$ is indeed the right factor because 
\begin{align*}
 & \left|\frac{1}{\sqrt{n}h_{n}}\sum_{j=1}^{nh_{n}-1}\tilde{I}_{jk}-\int_{0}^{\sqrt{n}-\frac{1}{\sqrt{n}h_{n}}}\frac{1}{2}\left(\sigma_{(k-1)h_{n}}^{2}+\eta^{2}\pi^{2}x^{2}\right)^{-2}\, dx\right|\\
 & =\left|\sum_{j=1}^{nh_{n}-1}\int_{\frac{j-1}{\sqrt{n}h_{n}}}^{\frac{j}{\sqrt{n}h_{n}}}\Big(\frac{1}{2}\big(\sigma_{(k-1)h_{n}}^{2}+\eta^{2}\pi^{2}j^{2}h_{n}^{-2}n^{-1}\big)^{-2}-\frac{1}{2}\big(\sigma_{(k-1)h_{n}}^{2}+\eta^{2}\pi^{2}x^{2}\big)^{-2}\Big)\, dx\right|\\
 & \lesssim\hspace{1em}\sum_{j=1}^{nh_{n}-1}\int_{\frac{j-1}{\sqrt{n}h_{n}}}^{\frac{j}{\sqrt{n}h_{n}}}\left|x-\frac{j}{\sqrt{n}h_{n}}\right|\, dx\,\max_{\frac{j-1}{\sqrt{n}h_{n}}\leq y\leq\frac{j}{\sqrt{n}h_{n}}}\big(y\big(\sigma_{(k-1)h_{n}}^{2}+\eta^{2}\pi^{2}y^{2}\big)^{-3}\big)\\
 & \leq\hspace{1em}\left(\frac{1}{\sqrt{n}h_{n}}\right)^{2}\left(\sum_{j=1}^{nh_{n}-1}\Big(\max_{\frac{j-1}{\sqrt{n}h_{n}}\leq y\leq\frac{j}{\sqrt{n}h_{n}}}\big(y\big(\sigma_{(k-1)h_{n}}^{2}+\eta^{2}\pi^{2}y^{2}\big)^{-3}\big)\Big)\right)\\
 & =\hspace{1em}\left(\frac{1}{\sqrt{n}h_{n}}\right)^{2}\left(\sum_{j=1}^{\f{\sqrt{n}h_{n}}}\frac{j}{\sqrt{n}h_{n}}+\sum_{j=\lceil\sqrt{n}h_{n}\rceil}^{nh_{n}-1}\left(\frac{\sqrt{n}h_{n}}{j-1}\right)^{5}\right)\\
 & \lesssim\hspace{1em}\left(\frac{1}{\sqrt{n}h_{n}}\right)^{2}\left(\sqrt{n}h_{n}+\sum_{j=1}^{nh_{n}-1-\lceil\sqrt{n}h_{n}\rceil}\left(\frac{\sqrt{n}h_{n}}{j+\lceil\sqrt{n}h_{n}\rceil}\right)^{5}\right)\lesssim \frac{1}{\sqrt{n}h_{n}}\,.
\end{align*}
We choose $h_{n}$ such that $\sqrt{n}h_{n}\rightarrow\infty$. 
Though we consider all possible spectral frequencies $j=1,\ldots,\lfloor nh_{n}\rfloor -1$,
we shall see in the following that the $I_{jk}$ for $j\ge\lceil n^{\beta}h_{n}\rceil$
become asymptotically negligible for a suitable $0\le\beta<1$. By
virtue of monotonicity of the sine on $\left[0,\frac{\pi}{2}\right]$
and $\sin(x)\geq\ x/2$ for $0\leq x\leq1$, it follows that 
\begin{eqnarray*}
\frac{1}{\sqrt{n}h_{n}}\sum_{j=\lceil n^{\beta}h_{n}\rceil}^{\lfloor nh_{n}\rfloor -1}I_{jk} & \lesssim & \frac{1}{\sqrt{n}h_{n}}\sum_{j=\lceil n^{\beta}h_{n}\rceil}^{\lfloor nh_{n}\rfloor -1}\left(n\sin^{2}\left(\frac{n^{\beta}h_{n}\pi}{2nh_{n}}\right)\right)^{-2}\\
 & \leq & \frac{1}{\sqrt{n}h_{n}}nh_{n}\left(n\sin^{2}\left(\frac{n^{\beta}h_{n}\pi}{2nh_{n}}\right)\right)^{-2}\\
 & \leq & \sqrt{n}\left(n\left(\frac{n^{\beta-1}\pi}{4}\right)^{2}\right)^{-2}\lesssim  n^{\frac{1}{2}-4\beta+2}=n^{\frac{5}{2}-4\beta}\,.
\end{eqnarray*}
We deduce that $\frac{1}{\sqrt{n}h_{n}}\sum_{j=\lceil n^{\beta}h_{n}\rceil}^{\lfloor nh_{n}\rfloor -1}I_{jk}=\KLEINO\left(1\right)$,
for every $5/8<\beta<1$. Moreover, we obtain for the first $\f{n^{\beta}h_{n}}$
summands of the remainder term 
\begin{eqnarray*}
\frac{1}{\sqrt{n}h_{n}}\sum_{j=1}^{\f{n^{\beta}h_{n}}}R_{jk} & = & \frac{1}{\sqrt{n}h_{n}}\sum_{j=1}^{\f{n^{\beta}h_{n}}}\frac{4\eta^{2}n\xi_{j}'}{\left(\sigma_{(k-1)h_{n}}^{2}+4\eta^{2}n\xi_{j}'^{2}\right)^{3}}\frac{\xi_{j}^{3}}{6}\lesssim  \frac{n}{\sqrt{n}h_{n}}\sum_{j=1}^{\f{n^{\beta}h_{n}}}\left(\xi_{j}^{3}\xi_{j}'\right)\\
 & \leq & \frac{n}{\sqrt{n}h_{n}}\sum_{j=1}^{\f{n^{\beta}h_{n}}}\left(\frac{j\pi}{nh_{n}}\right)^{4} \lesssim \frac{1}{\sqrt{n}h_{n}}n^{\beta}h_{n}n^{4\left(\beta-1\right)+1}=n^{5\beta-\frac{7}{2}}\,.
\end{eqnarray*}
Hence $\frac{1}{\sqrt{n}h_{n}}\sum_{j=1}^{\f{n^{\beta}h_{n}}}R_{jk}=\KLEINO\left(1\right)$
for every $\beta<7/10$. As the tails are asymptotic negligible we
thus have $\frac{1}{\sqrt{n}h_{n}}\sum_{j=1}^{\lfloor nh_{n}\rfloor -1}R_{jk}=\KLEINO\left(1\right)$
and, in particular, 
\[
\frac{1}{\sqrt{n}h_{n}}\sum_{j=1}^{\lfloor nh_{n}\rfloor -1}I_{jk}=\int_{0}^{\sqrt{n}-\frac{1}{\sqrt{n}h_{n}}}\frac{1}{2}\left(\sigma_{(k-1)h_{n}}^{2}+\eta^{2}\pi^{2}x^{2}\right)^{-2}\, dx+\KLEINO\left(1\right).
\]
Computing the integral expression yields
\begin{align*}
 & \int_{0}^{y}\frac{1}{2}\left(\sigma_{(k-1)h_{n}}^{2}+\eta^{2}\pi^{2}x^{2}\right)^{-2}\, dx =\\
 & \frac{y}{4\left|\sigma_{(k-1)h_{n}}\right|^{4}\left(1+\left(\frac{\eta\pi}{\left|\sigma_{(k-1)h_{n}}\right|}y\right)^{2}\right)}+\frac{1}{4\eta\pi\left|\sigma_{(k-1)h_{n}}\right|^{3}}\arctan\left(\frac{\eta\pi}{\left|\sigma_{(k-1)h_{n}}\right|}y\right).
\end{align*}
As $c<\left|\sigma_{s}\right|<C$ uniformly for all $0\leq s\leq1$ with constants $c,C$ and because $\arctan\left(x\right)\rightarrow\pi/2$ as $x\rightarrow\infty$,
as well as $\sqrt{n}-\frac{1}{\sqrt{n}h_{n}}\rightarrow\infty$ as
$n\rightarrow\infty$, we have 
\begin{align*}
\frac{1}{\sqrt{n}h_{n}}\sum_{j=1}^{\lfloor nh_{n}\rfloor -1}I_{jk} & =\frac{\sqrt{n}-\frac{1}{\sqrt{n}h_{n}}}{4\left|\sigma_{(k-1)h_{n}}\right|^{4}\left(1+\left(\frac{\eta\pi}{\left|\sigma_{(k-1)h_{n}}\right|}\left(\sqrt{n}-\frac{1}{\sqrt{n}h_{n}}\right)\right)^{2}\right)}\\
 & +\frac{1}{4\eta\pi\left|\sigma_{(k-1)h_{n}}\right|^{3}}\arctan\left(\frac{\eta\pi}{\left|\sigma_{(k-1)h_{n}}\right|}\left(\sqrt{n}-\frac{1}{\sqrt{n}h_{n}}\right)\right)+\KLEINO\left(1\right)\\
= & \frac{1}{8\eta\left|\sigma_{\left(k-1\right)h_{n}}\right|^{3}}+\KLEINO\left(1\right).
\end{align*}
The final step in the proof is another Taylor approximation:
\begin{align}
\sum_{k=1}^{\lfloor{th_{n}^{-1}}\rfloor}\E\left[\l{\left(\zeta_{k}^{n}\right)^{2}}\c G_{\left(k-1\right)h_{n}}\right] & =\sqrt{n}h_{n}^{2}\sum_{k=1}^{\lfloor{th_{n}^{-1}}\rfloor}I_{k}^{-1}+\KLEINO(1)=h_{n}\sum_{k=1}^{\lfloor{th_{n}^{-1}}\rfloor}\left(\frac{1}{\sqrt{n}h_{n}}\sum_{j=1}^{\lfloor nh_{n}\rfloor -1}I_{jk}\right)^{-1}+\KLEINO(1)\nonumber \\
 & =h_{n}\sum_{k=1}^{\lfloor{th_{n}^{-1}}\rfloor}\left(\frac{1}{8\eta\left|\sigma_{\left(k-1\right)h_{n}}\right|^{3}}+\KLEINO\left(1\right)\right)^{-1}+\KLEINO(1)\nonumber \\
 & =\left(h_{n}\sum_{k=1}^{\lfloor{th_{n}^{-1}}\rfloor}8\eta\left|\sigma_{\left(k-1\right)h_{n}}\right|^{3}\right)+\KLEINO\left(1\right).\label{eq:-2}
\end{align}
The last equality is true by Taylor and because $\sigma$ is uniformly
bounded. Because $\sigma$ is continuous we obtain the claim by Riemann
approximation, i.e. 
\begin{align*}
\sum_{k=1}^{\f{th_{n}^{-1}}}\E\left[\l{\left(\zeta_{k}^{n}\right)^{2}}\c G_{\left(k-1\right)h_{n}}\right]\rightarrow8\eta\int_{0}^{t}\left|\sigma_{s}\right|^{3}\, ds
\end{align*}
almost surely as $n\rightarrow\infty$ establishing \eqref{s2} with the asymptotic expression of Theorem \ref{thm:1}.

\subsubsection{\bf{Lyapunov's criterion and stability of convergence}\label{J_3}}
So far, we have proved \eqref{s1} and \eqref{s2}. Next, we shall
prove that the Lyapunov condition \eqref{s3} is satisfied. For the
sum of fourth moments, we obtain by Minkowski's inequality, Jensen's
inequality and $w_{jk}\in\c G_{\left(k-1\right)h_{n}}$ for all $k=1,\dots,h_{n}^{-1}$
and $j=1,\dots,\lfloor nh_{n}\rfloor-1$: 
\begin{eqnarray*}
\E\left[\left.\left(\zeta_{k}^{n}\right)^{4}\right|\c G_{\left(k-1\right)h_{n}}\right] & = & ( nh_{n})^{4}\left(\E\left[\l{\left(\sum_{j=1}^{\lfloor nh_{n}\rfloor-1}w_{jk}\left(\tilde{S}_{jk}^{2}-\E\left[\l{\tilde{S}_{jk}^{2}}\c G_{\left(k-1\right)h_{n}}\right]\right)\right)^{4}}\c G_{\left(k-1\right)h_{n}}\right]\right)\\
 & \leq & nh_{n}^{4}\left(\sum_{j=1}^{\lfloor nh_{n}\rfloor-1}w_{jk}\left(\E\left[\l{\left(\tilde{S}_{jk}^{2}-\E\left[\l{\tilde{S}_{jk}^{2}}\c G_{\left(k-1\right)h_{n}}\right]\right)^{4}}\c G_{\left(k-1\right)h_{n}}\right]\right)^{\frac{1}{4}}\right)^{4}\\
 & \lesssim & nh_{n}^{4}\left(\sum_{j=1}^{\lfloor nh_{n}\rfloor -1}w_{jk}\left(\E\left[\l{\tilde{S}_{jk}^{8}}\c G_{\left(k-1\right)h_{n}}\right]\right)^{\frac{1}{4}}\right)^{4}\,.
\end{eqnarray*}
If we can show
\begin{align}
\E\left[\l{\left\langle n\triangle^{n}\tilde{X},\Phi_{jk}\right\rangle _{n}^{8}}\c
G_{\left(k-1\right)h_{n}}\right] &
\lesssim\sigma_{\left(k-1\right)h_{n}}^{8},\label{norm_est}\\
\E\left[\left[\epsilon,\varphi_{jk}\right]_{n}^{8}\right] &
\lesssim\left(\eta^{2}\right)^{4}\frac{\left[\varphi_{jk},\varphi_{jk}\right]_{n}^{4}}{n^{4}},\label{noise_est}
\end{align}
then we are able to conclude that
\begin{eqnarray}
\E\left[\l{\tilde{S}_{jk}^{8}}\c G_{\left(k-1\right)h_{n}}\right] & \lesssim &
\E\left[\l{\left\langle n\Delta^{n}\tilde{X},\Phi_{jk}\right\rangle _{n}^{8}}\c
G_{\left(k-1\right)h_{n}}\right]+\E\left[\left[\epsilon,\varphi_{jk}\right]_{n}^{8}\right]\nonumber
\\
& \lesssim &
\sigma_{\left(k-1\right)h_{n}}^{8}+\left(\eta^{2}\right)^{4}\frac{\left[\varphi_{jk},\varphi_{jk}\right]_{n}^{4}}{n^{4}}\lesssim\left(\sigma_{\left(k-1\right)h_{n}}^{2}+\eta^{2}\frac{\left[\varphi_{jk},\varphi_{jk}\right]_{n}}{n}\right)^{4}.\label{stat_moments}
\end{eqnarray}
Hence, we obtain from \eqref{helpweights}
\begin{align*}
\sum_{k=1}^{h_{n}^{-1}}\E\left[\left.\left(\zeta_{k}^{n}\right)^{4}\right|\c
G_{\left(k-1\right)h_{n}}\right]\lesssim\sum_{k=1}^{h_{n}^{-1}}nh_{n}^{4}\left(\sum_{j=1}^{\lfloor nh_{n}\rfloor -1}\hspace*{-.1cm}w_{jk}\left(\sigma_{\left(k-1\right)h_{n}}^{2}+\frac{\eta^{2}}{n}\left[\varphi_{jk},\varphi_{jk}\right]_{n}\right)\right)^{4}\hspace*{-.1cm}\lesssim
n^{2}h_{n}^{6}=\KLEINO\left(1\right)
\end{align*}
which proves \eqref{s3}. We are therefore left with proving \eqref{norm_est}
and \eqref{noise_est}. The first inequality holds because $\langle
n\triangle^{n}\tilde{X},\Phi_{jk}\rangle_{n}$
is $N(0,\sigma_{\left(k-1\right)h_{n}}^{2})$-distributed conditional
on $\c G_{\left(k-1\right)h_{n}}$. In order to see why the second
inequality is satisfied, let $g_{l}=\epsilon_{l/n}\varphi_{jk}((l-\frac{1}{2})/{n})$
for $l=1,\dots, n $. The $g_{l}$ are independent and centered
such that for any $1\leq l_{1},\dots,l_{8}\leq  n$ with $\E[g_{l_{1}}\cdots
g_{l_{8}}]\neq0$, referring to observations on the same bin, each $g_{l}$ appears at least twice and there are at most four distinct
$g_{l}$. If there are exactly four distinct $g_{l}$, e.g.
$l_{1}=l_{2},l_{3}=l_{4},l_{5}=l_{6},l_{7}=l_{8}$,
we arrive at the bound
\begin{eqnarray*}
\sum_{l_{1},l_{3},l_{5},l_{7}}\E\left[g_{l_{1}}\cdots g_{l_{8}}\right]\leq &
\left(\eta^{2}\right)^{4}n^{4}\left[\varphi_{jk},\varphi_{jk}\right]_{n}^{4}.
\end{eqnarray*}
The leading term in \eqref{noise_est} does not include eighth moments of the noise, but the fourth power of the second moment which we denote $(\eta^2)^4$ to prevent any confusion.
If there are less than four distinct $g_{l}$, we obtain from \eqref{or2}
and \eqref{eq:or3} with the assumption $\E[\epsilon_t^8]<\infty$, that the respective sums are asymptotically of
smaller order. The terms with eighth moments are thus negligible. This implies \eqref{noise_est}:
\begin{align*}
\E\left[\left[\epsilon,\varphi_{jk}\right]_{n}^{8}\right] & =n^{-8}\sum_{1\leq
l_{1},\dots,l_{8}\leq nh_{n}}\E\left[g_{l_{1}}\cdots
g_{l_{8}}\right]\lesssim\left(\eta^{2}\right)^{4}\frac{\left[\varphi_{jk},\varphi_{jk}\right]_{n}^{4}}{n^{4}}.
\end{align*}
It remains to verify \eqref{s4} and \eqref{s5}. The proof follows a similar strategy
as the proofs of Proposition 5.10, step 4, of \cite{Jacod2010} and Lemma 5.7 of \cite{JLMPV}. It is sufficient
to show with $\delta_{k}^{n}(M)=M_{kh_{n}}-M_{(k-1)h_{n}}$ that
\begin{equation}
\sum_{k=1}^{\lfloor{th_{n}^{-1}}\rfloor}\E\left[\zeta_{k}^{n}\delta_{k}^{n}\left(M\right)\big|\mathcal{G}_{(k-1)h_{n}}\right]\stackrel{\P}{\rightarrow}0\label{eq:-1}
\end{equation}
for any $M\in\c N$, the set of square-integrable $(\c G_{t})_{0\leq t\leq1}$-martingales.
Note that \eqref{eq:-1} is closed under $L^{2}$-convergence with
respect to the terminal variables $M_{1}\in L^{2}(\c G)$ for $M\in\c N$ what follows by Cauchy-Schwarz inequality.
Define subsets $\c N^{0},\c N^{1}$, $\c N^{2}$ of $\c N$, where
$\c N^{0}$ is the space of all square-integrable martingales adapted
to $\c W=\sigma(W_{s}:s\leq1)$, i.e.\,every such martingale has the
form $C+\int_{0}^{t}h_{s}\,dW_{s}$ for some constant $C$ and a
predictable square-integrable process $h\in\c W$. $\c N^{1}$ is
the set of all square-integrable $(\c F_{t})$-martingales which are
orthogonal to $W$, and $\c N^{2}$ is the space of all square-integrable
martingales adapted to the filtration $\c E_{t}=\sigma(\epsilon_{s}:s\leq t)$,
generated by the noise process. Then the set of square-integrable
martingales of the form $M\cdot N$, for $M\in\c N^{0}\cup\c N^{1}$,
$N\in\c N^{2}$, is total in $\c N$ (by independence any process
of the form $M\cdot N$ is again a martingale) and it is enough to
show \eqref{eq:-1} for such processes. Using the decomposition
\begin{equation}
\delta_{k}^{n}\left(MN\right)=\delta_{k}^{n}\left(M\right)\delta_{k}^{n}\left(N\right)+N_{\left(k-1\right)h_{n}}\delta_{k}^{n}\left(M\right)+M_{\left(k-1\right)h_{n}}\delta_{k}^{n}\left(N\right)\label{eq:martingale_decomp}
\end{equation}
we have by independence of $W$ and noise for any $k=1,\ldots,h_n^{-1}$: 
\begin{align*}
 & \E\left[\left.\zeta_{k}^{n}\delta_{k}^{n}\left(MN\right)\right|\c G_{\left(k-1\right)h_{n}}\right]\\
 & =n^{1/4}h_{n}\sum_{j=1}^{\lfloor nh_{n}\rfloor -1}w_{jk}\left(\E\left[\l{\tilde{S}_{jk}^{2}\delta_{k}^{n}\left(MN\right)}\c G_{\left(k-1\right)h_{n}}\right]-\E\left[\l{\tilde{S}_{jk}^{2}}\c G_{\left(k-1\right)h_{n}}\right]\E\left[\l{\delta_{k}^{n}\left(MN\right)}\c G_{\left(k-1\right)h_{n}}\right]\right)\\
 & =n^{1/4}h_{n}\sum_{j=1}^{\lfloor nh_{n}\rfloor -1}w_{jk}\Bigg(N_{\left(k-1\right)h_{n}}\E\left[\l{\left\langle n\triangle^{n}\tilde{X},\Phi_{jk}\right\rangle _{n}^{2}\delta_{k}^{n}\left(M\right)}\c F_{\left(k-1\right)h_{n}}\right]\\
 & \ \ \ \ \ \ \ \ \ \ \ -2\E\left[\l{\left\langle n\triangle^{n}\tilde{X},\Phi_{jk}\right\rangle _{n}\delta_{k}^{n}\left(M\right)}\c F_{\left(k-1\right)h_{n}}\right]\E\left[\l{\left[\epsilon,\varphi_{jk}\right]_{n}\delta_{k}^{n}\left(N\right)}\c E_{\left(k-1\right)h_{n}}\right]\\
 & \ \ \ \ \ \ \ \ \ \ \ +M_{\left(k-1\right)h_{n}}\E\left[\l{\left[\epsilon,\varphi_{jk}\right]_{n}^{2}\delta_{k}^{n}\left(N\right)}\c E_{\left(k-1\right)h_{n}}\right]\Bigg).
\end{align*}
Let first $M\in\c N^{0}$. As $\c N^{0}$ is closed and because the
case $M$ constant is trivial, we can assume that $M=\int_{0}^{\cdot}\gamma_{s}\,dW_{s}$
for $\gamma$ bounded, adapted to $\c W$ and
piecewise constant on intervals $(T_{q},T_{q+1}]$ for some $0=T_{0}<T_{1}<\dots,T_{m}=1$,
$m\geq1$, such that
\begin{align}
 & \E\hspace*{-.05cm}\left[\l{\left\langle n\triangle^{n}\tilde{X},\Phi_{jk}\right\rangle _{n}^{2}\delta_{k}^{n}\left(M\right)}\c F_{\left(k-1\right)h_{n}}\right]\label{eq:-3}\\
 & =\sum_{l,p=1}^{n}\sigma_{\left(k-1\right)h_{n}}^{2}\sum_{q=1}^{m}\E\left[\l{\triangle_{l}^{n}W\triangle_{p}^{n}W\gamma_{t_{q}}\left(W_{T_{q+1}\wedge kh_{n}}\hspace*{-.05cm}-\hspace*{-.05cm}W_{T_{q}\vee\left(k-1\right)h_{n}}\right)}\c F_{\left(k-1\right)h_{n}}\right]\hspace*{-.05cm}\Phi_{jk}\hspace*{-.05cm}\Big(\frac{l}{n}\Big)\hspace*{-.05cm}\Phi_{jk}\hspace*{-.05cm}\Big(\frac{p}{n}\Big).\nonumber 
\end{align}
For $n$ large enough there is at most one $T_{q}$ per bin.
If there is no $T_{q}$ on the $k$th bin, the conditional expectation
above vanishes by independence of the Brownian increments for any
$l,p$. On the other hand, there are only $m$ bins containing some
$T_{q}$ and for every such bin the left-hand side of \eqref{eq:-3}
is bounded, what can be seen e.g.\,by \eqref{norm_est} and because $M$ is square-integrable.
Hence, 
\begin{equation}
n^{1/4}h_{n}\sum_{k=1}^{\lfloor{th_{n}^{-1}}\rfloor}\sum_{j=1}^{\lfloor nh_{n}\rfloor -1}w_{jk}N_{\left(k-1\right)h_{n}}\E\left[\l{\left\langle n\triangle^{n}\tilde{X},\Phi_{jk}\right\rangle _{n}^{2}\delta_{k}^{n}\left(M\right)}\c F_{\left(k-1\right)h_{n}}\right]=\KLEINO_{\P}(1).\label{eq:term1}
\end{equation}
Next, let $M\in\c N^{1}$, i.e.\,$M$ is orthogonal to $W$. The left-hand side in \eqref{eq:-3}
is now equal to
\[
\sum_{l,p,q=1}^{n}\sigma_{\left(k-1\right)h_{n}}^{2}\E\left[\l{\triangle_{l}^{n}W\triangle_{p}^{n}W\triangle_{q}^{n}M}\c F_{\left(k-1\right)h_{n}}\right]\Phi_{jk}\Big(\frac{l}{n}\Big)\Phi_{jk}\Big(\frac{p}{n}\Big).
\]
The conditional expectation vanishes, except for $l=p=q$, $p<l=q$
or $l<p=q$. For $l=p=q$ we obtain by It\^{o}'s formula 
\begin{align*}
 & \E\left[\l{\left(\triangle_{l}^{n}W\right)^{2}\triangle_{l}^{n}M}\c F_{\left(k-1\right)h_{n}}\right]=\E\left[\l{\left(\left(\triangle_{l}^{n}W\right)^{2}-\left(\frac{1}{n}\right)\right)\triangle_{l}^{n}M}\c F_{\left(k-1\right)h_{n}}\right]\\
 & \hspace*{0.5cm}+\E\left[\l{\left(\frac{1}{n}\right)\triangle_{l}^{n}M}\c F_{\left(k-1\right)h_{n}}\right]=\E\left[\l{\left(\int_{\frac{l-1}{n}}^{\frac{l}{n}}W_{s}\,dW_{s}\right)\triangle_{l}^{n}M}\c F_{\left(k-1\right)h_{n}}\right].
\end{align*}
However, $((\int_{0}^{t}W_{s}\,dW_{s})\cdot M_{t})_{0\leq t\leq1}$
is an $(\c F_{t})$-martingale by orthogonality such that the last
expression vanishes. The
cases $p<l=q$ and $l<p=q$ follow similarly. Hence, \eqref{eq:term1}
is still satisfied; the left-hand side is actually zero. \\ 
With respect to $N$, as $\c N^{2}$ is closed, we can assume without
loss of generality that $N_{1}=f(\epsilon_{T_{1}},\dots,\epsilon_{T_{m'}})$
for some measurable function $f$ and some $0\leq T_{1}<\dots<T_{m'}\leq1$,
$m'\geq1$. Similar as before, for $n$ large there is at most one
$T_{q'}$ per bin. On any bin not containing such a
$T_{q'}$ it holds that $\delta_{k}^{n}(N)=0$. Bounding the terms for the $m'$ other bins 
yields for $M\in\mathcal{N}^1\cup\mathcal{N}^2$
\begin{equation}
n^{1/4}h_{n}\sum_{k=1}^{\lfloor{th_{n}^{-1}}\rfloor}\sum_{j=1}^{\lfloor nh_{n}\rfloor -1}w_{jk}M_{\left(k-1\right)h_{n}}\E\left[\l{\left[\epsilon,\varphi_{jk}\right]_{n}^{2}\delta_{k}^{n}\left(N\right)}\c E_{\left(k-1\right)h_{n}}\right]=\KLEINO_{\P}\left(1\right).\label{eq:term2}
\end{equation}
From the previous discussion we further see that for all but at most
$m+m'$ blocks: 
\[
\sum_{j=1}^{nh_{n}-1}w_{jk}\E\left[\l{\left\langle n\triangle^{n}\tilde{X},\Phi_{jk}\right\rangle _{n}\delta_{k}^{n}\left(M\right)}\c F_{\left(k-1\right)h_{n}}\right]\E\left[\l{\left[\epsilon,\varphi_{jk}\right]_{n}\delta_{k}^{n}\left(N\right)}\c E_{\left(k-1\right)h_{n}}\right]=0,
\]
whereas bounds on the remaining $m+m'$ blocks guarantee that the cross terms tend to zero in probability.
We conclude that \eqref{eq:-1} holds. This completes the proof of Proposition \ref{propmain}.

\subsection{{\bf{Proof of Proposition \ref{propremainder}}}}
We first give a general outline of the proof, deferring some technical
details to the end of this section. By Taylor we have for all $k=1,\dots,h_{n}^{-1}$
and $j=1,\dots,\lfloor nh_{n}\rfloor -1$, the existence of random variables $\xi_{jk}$
such that $S_{jk}^{2}-\tilde{S}_{jk}^{2}=2\tilde{S}_{jk}(S_{jk}-\tilde{S}_{jk})+2(\xi_{jk}-\tilde{S}_{jk})(S_{jk}-\tilde{S}_{jk})$
and $|\xi_{jk}-\tilde{S}_{jk}|\leq|S_{jk}-\tilde{S}_{jk}|$. This
yields
\begin{align*}
n^{\frac{1}{4}}\left(\IV_{n,t}^{or}(Y)-\IV_{n,t}^{or}(\tilde{X}+\epsilon)\right) & =n^{\frac{1}{4}}\Big(h_{n}\sum_{k=1}^{\f{th_{n}^{-1}}}\sum_{j=1}^{\lfloor nh_{n}\rfloor -1}w_{jk}\left(S_{jk}^{2}-\tilde{S}_{jk}^{2}\right)\Big)\\
 & =\Big(n^{\frac{1}{4}}h_{n}\sum_{k=1}^{\lfloor th_{n}^{-1}\rfloor}\sum_{j=1}^{\lfloor nh_{n}\rfloor -1}w_{jk}\left(2\tilde{S}_{jk}\left(S_{jk}-\tilde{S}_{jk}\right)\right)\Big)\\
 & +\Big(n^{\frac{1}{4}}h_{n}\sum_{k=1}^{\lfloor th_{n}^{-1}\rfloor}\sum_{j=1}^{\lfloor nh_{n}\rfloor -1}w_{jk}\left(\xi_{jk}-\tilde{S}_{jk}\right)\left(S_{jk}-\tilde{S}_{jk}\right)\Big).
\end{align*}
For the second sum above, which we denote by $Z_t^{n}$, we obtain by the Markov inequality and Step 1 below
for any $\varepsilon>0$
\begin{eqnarray*}
\P\left(\sup_{0\leq t\leq1}\left|Z_{t}^{n}\right|>\varepsilon\right) & \leq & \P\Bigg(\Big(n^{\frac{1}{4}}h_{n}\sum_{k=1}^{h_{n}^{-1}}\sum_{j=1}^{\lfloor nh_{n}\rfloor -1}w_{jk}\left|S_{jk}-\tilde{S}_{jk}\right|^{2}\Big)>\varepsilon\Bigg)\\
 & \leq & \varepsilon^{-1}n^{\frac{1}{4}}h_{n}\sum_{k=1}^{h_{n}^{-1}}\sum_{j=1}^{\lfloor nh_{n}\rfloor -1}w_{jk}\E\left[\left(S_{jk}-\tilde{S}_{jk}\right)^{2}\right]\\
 & \lesssim & \varepsilon^{-1}n^{\frac{1}{4}}h_{n}\rightarrow 0.
\end{eqnarray*}
Let $T_{k}^{n}=\sum_{j=1}^{\lfloor nh_{n}\rfloor -1}w_{jk}\left(2\tilde{S}_{jk}\left(S_{jk}-\tilde{S}_{jk}\right)\right)$
and write the first sum above as $M_{t}^{n}+R_{t}^{n}$ with
\begin{eqnarray*}
M_{t}^{n} & = & n^{\frac{1}{4}}h_{n}\sum_{k=1}^{\lfloor th_{n}^{-1}\rfloor}\left(T_{k}^{n}-\E\left[\l{T_{k}^{n}}\mathcal{G}_{\left(k-1\right)h_{n}}\right]\right),\\
R_{t}^{n} & = & n^{\frac{1}{4}}h_{n}\sum_{k=1}^{\lfloor th_{n}^{-1}\rfloor}\E\left[\l{T_{k}^{n}}\mathcal{G}_{\left(k-1\right)h_{n}}\right].
\end{eqnarray*}
In Step 2 we show that
\[
\sum_{k=1}^{\lfloor th_{n}^{-1}\rfloor}\E\left[\left(n^{\frac{1}{4}}h_nT_{k}^{n}\right)^{2}\right]\xrightarrow{}0,\quad n\xrightarrow{}\infty.
\]
A well known result thereby yields $M_{t}^{n}\xrightarrow{ucp}0$. Finally,
observe that
\begin{align*}
 & \E\left[\l{\left(2\tilde{S}_{jk}\left(S_{jk}-\tilde{S}_{jk}\right)\right)}\c G_{\left(k-1\right)h_{n}}\right]\\
 & =\E\left[\l{2\left(\left\langle n\triangle^{n}\tilde{X},\Phi_{jk}\right\rangle _{n}-\left[\epsilon,\varphi_{jk}\right]_{n}\right)\left\langle n\triangle^{n}\left(X-\tilde{X}\right),\Phi_{jk}\right\rangle _{n}}\c G_{\left(k-1\right)h_{n}}\right]\\
 & =\E\left[\l{2\left\langle n\triangle^{n}\tilde{X},\Phi_{jk}\right\rangle _{n}\left\langle n\triangle^{n}\left(X-\tilde{X}\right),\Phi_{jk}\right\rangle _{n}}\c G_{\left(k-1\right)h_{n}}\right],
\end{align*}
i.e.\,the noise terms vanish, thereby simplifying the following calculations.\\
Write \(\E[(2\tilde{S}_{jk}(S_{jk}-\tilde{S}_{jk}))|\c G_{\left(k-1\right)h_{n}}]\)
as the sum $D_{jk}^{n}+V_{jk}^{n}$ with
\begin{eqnarray*}
D_{jk}^{n} & = & \E\left[\l{2\left\langle n\triangle^{n}\tilde{X},\Phi_{jk}\right\rangle _{n}\left(\sum_{l=1}^{n}\left(\int_{\frac{l-1}{n}}^{\frac{l}{n}}b_{s}\, ds\right)\Phi_{jk}\left(\frac{l}{n}\right)\right)}\c G_{\left(k-1\right)h_{n}}\right],\\
V_{jk}^{n} & = & \E\left[\l{2\left\langle n\triangle^{n}\tilde{X},\Phi_{jk}\right\rangle _{n}\left(\sum_{l=1}^{n}\left(\int_{\frac{l-1}{n}}^{\frac{l}{n}}\left(\sigma_{s}-\sigma_{\left(k-1\right)h_{n}}\right)\, dW_{s}\right)\Phi_{jk}\left(\frac{l}{n}\right)\right)}\c G_{\left(k-1\right)h_{n}}\right].
\end{eqnarray*}
In Step 3 we show that $\left|D_{jk}^n+V_{jk}^n\right|\lesssim h_{n}^{\beta}$
for some $\beta>1/2$. This yields immediately
\begin{eqnarray*}
\sup_{0\leq t\leq1}\left|R_{t}^{n}\right| & \leq & n^{\frac{1}{4}}h_{n}\sum_{k=1}^{h_{n}^{-1}}\sum_{j=1}^{\lfloor nh_{n}\rfloor -1}w_{jk}\left|D_{jk}^{n}+V_{jk}^{n}\right|\lesssim n^{\frac{1}{4}}h_{n}^{\beta}=\KLEINO\left(1\right),
\end{eqnarray*}
implying $ucp$-convergence. We therefore conclude that
\[
n^{\frac{1}{4}}\left(\IV_{n,t}^{or}(Y)-\IV_{n,t}^{or}(\tilde{X}+\epsilon)\right)\xrightarrow{ucp}0,\,\,\, n\xrightarrow{}\infty.
\]
The second claim
\[
n^{\frac14}\int_{0}^{t}\big(\sigma_{s}^{2}-\sigma_{\lfloor sh_{n}^{-1}\rfloor h_{n}}^{2}\big)\, ds\xrightarrow{ucp}0,\,\,\, n\xrightarrow{}\infty,
\]
follows on Assumption \ref{H1} for both, H\"older smooth and semimartingale volatility, by Equations \eqref{JM2} and \eqref{JM}. This proves Proposition \ref{propremainder}. We end this section with detailed proofs
of Steps $1$ -- $3$.\\[.2cm]
Step 1: $\E[(S_{jk}-\tilde{S}_{jk})^{4}]\lesssim h_{n}^{2}$.\\[.1cm]
Using the decomposition
\begin{align}
S_{jk}-\tilde{S}_{jk} & =\left\langle n\triangle^{n}(X-\tilde{X}),\Phi_{jk}\right\rangle _{n}\label{S_diff}\\
 & =\sum_{l=1}^{n}\left(\int_{\frac{l-1}{n}}^{\frac{l}{n}}b_{s}\, ds\right)\Phi_{jk}\left(\frac{l}{n}\right)+\sum_{l=1}^{n}\left(\int_{\frac{l-1}{n}}^{\frac{l}{n}}\left(\sigma_{s}-\sigma_{\left(k-1\right)h_{n}}\right)\, dW_{s}\right)\Phi_{jk}\left(\frac{l}{n}\right)\nonumber 
\end{align}
into drift and volatility terms, we obtain
\begin{align*}
\E\left[\left(S_{jk}-\tilde{S}_{jk}\right)^{4}\right] & \lesssim\hspace{1em}\E\left[\left(\sum_{l=1}^{n}\left(\int_{\frac{l-1}{n}}^{\frac{l}{n}}b_{s}\, ds\right)\Phi_{jk}\left(\frac{l}{n}\right)\right)^{4}\right]\\
 & \quad +\E\left[\left(\sum_{l=1}^{n}\left(\int_{\frac{l-1}{n}}^{\frac{l}{n}}\left(\sigma_{s}-\sigma_{\left(k-1\right)h_{n}}\right)\, dW_{s}\right)\Phi_{jk}\left(\frac{l}{n}\right)\right)^{4}\right].
\end{align*}
The first addend is bounded by $h_{n}^{2}$. For the second let $\kappa_{l}=\int_{\frac{l-1}{n}}^{\frac{l}{n}}\left(\sigma_{s}-\sigma_{\left(k-1\right)h_{n}}\right)\, dW_{s}$, such that
\begin{align*}
 & \E\left[\left(\sum_{l=1}^{n}\left(\int_{\frac{l-1}{n}}^{\frac{l}{n}}\left(\sigma_{s}-\sigma_{\left(k-1\right)h_{n}}\right)\, dW_{s}\right)\Phi_{jk}\left(\frac{l}{n}\right)\right)^{4}\right]\\
 & =\sum_{l,l',p,p'}\E\left[\kappa_{l}\kappa_{l'}\kappa_{p}\kappa_{p'}\right]\Phi_{jk}\left(\frac{l}{n}\right)\Phi_{jk}\left(\frac{l'}{n}\right)\Phi_{jk}\left(\frac{p}{n}\right)\Phi_{jk}\left(\frac{p'}{n}\right).
\end{align*}
The only choices for $l,l',p,p'$ with non-vanishing results
are $l,l'<p=p'$, $l<l'=p=p'$ and $l=l'=p=p'$. In all three cases
we can conclude by the Burkholder inequality and \eqref{e2} that
\begin{align*} \left|\E\left[\kappa_{l}\kappa_{l'}\kappa_{p}\kappa_{p'}\right]\Phi_{jk}\left(\frac{l}{n}\right)\Phi_{jk}\left(\frac{l'}{n}\right)\Phi_{jk}\left(\frac{p}{n}\right)\Phi_{jk}\left(\frac{p'}{n}\right)\right|
 \lesssim n^{-4}h_{n}^{-2}.
\end{align*}
Observe that in any of the three mentioned cases we find at least
two identical integers $l,l',p$ or $p'$. In all, there are $\lfloor nh_{n}\rfloor \cdot\binom{\lfloor nh_{n}\rfloor -1}{2}\cdot4!$
possibilities to choose such indices. Hence, we obtain
\[
\E\left[\left(\sum_{l=1}^{n}\left(\int_{\frac{l-1}{n}}^{\frac{l}{n}}\left(\sigma_{s}-\sigma_{\left(k-1\right)h_{n}}\right)\, dW_{s}\right)\Phi_{jk}\left(\frac{l}{n}\right)\right)^{4}\right]\lesssim\left(nh_{n}\right)^{3}n^{-4}h_{n}^{-2}=n^{-1}h_{n}\lesssim h_{n}^{2}
\]
and therefore the claim holds.\\[.2cm] 
Step 2: $\sum_{k=1}^{\lfloor th_{n}^{-1}\rfloor}\E[(n^{\frac{1}{4}}h_{n}T_{k}^{n})^{2}]\xrightarrow{}0,\quad n\xrightarrow{}\infty$.\\[.1cm]
Applying the Minkowski and Cauchy-Schwarz inequalities, we obtain
\begin{align*}
 & \Big\Vert n^{\frac{1}{4}}h_{n}\sum_{j=1}^{\lfloor nh_{n}\rfloor -1}w_{jk}\left(2\tilde{S}_{jk}\left(S_{jk}-\tilde{S}_{jk}\right)\right)\Big\Vert _{L^{2}\left(\P\right)}^{2}\\
 & \overset{}{\leq}n^{\frac{1}{2}}h_{n}^{2}\left(\sum_{j=1}^{\lfloor nh_{n}\rfloor -1}\left\Vert w_{jk}\left(2\tilde{S}_{jk}\left(S_{jk}-\tilde{S}_{jk}\right)\right)\right\Vert _{L^{2}\left(\P\right)}\right)^{2}\\
 & \overset{\mbox{}}{\leq}n^{\frac{1}{2}}h_{n}^{2}\left(\sum_{j=1}^{\lfloor nh_{n}\rfloor -1}w_{jk}\left(\E\left[\tilde{S}_{jk}^{4}\right]\right)^{\frac{1}{4}}\left(\E\left[\left(S_{jk}-\tilde{S}_{jk}\right)^{4}\right]\right)^{\frac{1}{4}}\right)^{2}.
\end{align*}
By Step $1$ we already know that $\E\big[\big(S_{jk}-\tilde{S}_{jk}\big)^{4}\big]\lesssim h_{n}^{2}$.
Because $\sigma$ is bounded, we obtain by \eqref{stat_moments} the bound
\begin{align*}
\E\left[\tilde{S}_{jk}^{4}\right] & \leq\E^{\frac{1}{2}}\left[\E\left[\l{\tilde{S}_{jk}^{8}}\c G_{\left(k-1\right)h_{n}}\right]\right]\lesssim\E^{\frac{1}{2}}\left[\left(\sigma_{\left(k-1\right)h_{n}}^{2}+\frac{\eta^{2}}{n}\left[\varphi_{jk},\varphi_{jk}\right]_{n}\right)^{4}\right]\\
 & \lesssim\left(1+\frac{\eta^{2}}{n}\left[\varphi_{jk},\varphi_{jk}\right]_{n}\right)^{2}\leq\left(1+\frac{\eta^{2}}{n}\left[\varphi_{jk},\varphi_{jk}\right]_{n}\right)^{4}.
\end{align*}
Together with \eqref{helpweights} it follows that
\begin{align*}
\sum_{k=1}^{\lfloor th_{n}^{-1}\rfloor}\E\left[\left(n^{\frac{1}{4}}h_{n}T_{k}^{n}\right)^{2}\right] & \lesssim n^{\frac{1}{2}}h_{n}^{3}\sum_{k=1}^{\lfloor th_{n}^{-1}\rfloor}\left(\sum_{j=1}^{\lfloor nh_{n}\rfloor -1}w_{jk}\left(1+\frac{\eta^{2}}{n}\left[\varphi_{jk},\varphi_{jk}\right]_{n}\right)\right)^{2}\\
 & \lesssim n^{\frac{1}{2}}h_{n}^{2}\cdot n^{2}h_{n}^{4}=\KLEINO\left(1\right).
\end{align*}
Step 3: $\left|D_{jk}^n+V_{jk}^n\right|\lesssim h_n^{\beta}$ for some
$\beta>1/2$.\\[.2cm]
Expanding the sums in $V_{jk}^n$ and It\^{o} isometry
yield 
\begin{eqnarray*}
V_{jk}^n & = & \sum_{l,m=1}^{n}\left(\E\left[\l{\triangle_{l}^{n}\tilde{X}\left(\int_{\frac{m-1}{n}}^{\frac{m}{n}}\left(\sigma_{s}-\sigma_{\left(k-1\right)h_{n}}\right)\,dW_{s}\right)}\c G_{\left(k-1\right)h_{n}}\right]\Phi_{jk}\left(\frac{l}{n}\right)\Phi_{jk}\left(\frac{m}{n}\right)\right)\\
 & \overset{}{=} & \sum_{l=1}^{n}\E\left[\int_{\frac{l-1}{n}}^{\frac{l}{n}}\left(\sigma_{\left(k-1\right)h_{n}}\left(\sigma_{s}-\sigma_{\left(k-1\right)h_{n}}\right)\right)\,ds\right]\Phi_{jk}^{2}\left(\frac{l}{n}\right).
\end{eqnarray*}
By Assumption \ref{H1} for $s\in[(k-1)h_n,kh_n]$:
\begin{align*}
\left|\E\left[\left(\sigma_{\left(k-1\right)h_{n}}\left(\sigma_{s}-\sigma_{\left(k-1\right)h_{n}}\right)\right)\right]\right| & =\left|\E\left[\sigma_{\left(k-1\right)h_{n}}\E\left[\l{\sigma_{s}-\sigma_{\left(k-1\right)h_{n}}}\c G_{\left(k-1\right)h_{n}}\right]\right]\right|\lesssim h_{n}^{\alpha},
\end{align*}
and hence by Fubini $\left|V_{jk}\right|\lesssim h_{n}^{\alpha},$
as well. With respect to $D_{jk}^{n}$, we need an additional approximation.
By Assumption \ref{H1}, decomposing $b_{s}=g(b^A_s,b^B_s)$ into H\"{o}lder and semimartingale part we have by the
boundedness of $\E[|\langle n\triangle^{n}\tilde{X},\Phi_{jk}\rangle_{n}|]$ from \eqref{norm_est}: 
\begin{align*}
 & \left|\E\left[\l{\left\langle n\triangle^{n}\tilde{X},\Phi_{jk}\right\rangle _{n}\int_{\frac{l-1}{n}}^{\frac{l}{n}}b_{s}\,ds}\c G_{\left(k-1\right)h_{n}}\right]\right|\\
 & \lesssim\left|\E\left[\l{\left\langle n\triangle^{n}\tilde{X},\Phi_{jk}\right\rangle _{n}\int_{\frac{l-1}{n}}^{\frac{l}{n}}\left(b_{s}-b_{\left(k-1\right)h_{n}}\right)\,ds}\c G_{\left(k-1\right)h_{n}}\right]\right|\\
 & +\left|\frac{b_{\left(k-1\right)h_{n}}}{n}\E\left[\l{\left\langle n\triangle^{n}\tilde{X},\Phi_{jk}\right\rangle _{n}}\c G_{\left(k-1\right)h_{n}}\right]\right|\lesssim h_{n}^{\nu\wedge\frac{1}{2}}n^{-1}.
\end{align*}
Using this bound we find that 
\begin{eqnarray*}
\left|D_{jk}^{n}\right| & \leq & \sum_{l=1}^{n}\left|\E\left[\l{2\left\langle n\triangle^{n}\tilde{X},\Phi_{jk}\right\rangle _{n}\int_{\frac{l-1}{n}}^{\frac{l}{n}}b_{s}\,ds}\c G_{\left(k-1\right)h_{n}}\right]\right|\left|\Phi_{jk}\left(\frac{l}{n}\right)\right|\lesssim h_{n}^{(\nu\wedge\frac{1}{2})+\frac{1}{2}}\,.
\end{eqnarray*}
We obtain the claim with $\beta=\min\left\{ (\nu\wedge\frac{1}{2})+\frac{1}{2},\alpha\right\} $.
This is the only time we need the smoothness of the drift in Assumption \ref{H1}.

\subsection{{\bf{Proofs of Theorem \ref{thm:2} and Theorem \ref{thm:3} for oracle estimation}}}
We decompose $X$ similarly as in the proof of Theorem \ref{thm:1}:
\begin{align}X_t=X_0+\bar B_t+\tilde B_t+\bar C_t+\tilde C_t\,,\end{align}
where we denote
\begin{subequations}
\begin{align}\bar B_t=\int_0^t  b_{\lfloor sh_n^{-1}\rfloor h_n}\,ds~,~\tilde B_t=\int_0^t (b_s-b_{\lfloor sh_n^{-1}\rfloor h_n})\,ds\,,\end{align}
\begin{align}\label{appr3}\bar C_t=\int_0^t  \sigma_{\lfloor sh_n^{-1}\rfloor h_n}\,dW_s~,~\tilde C_t=\int_0^t (\sigma_s-\sigma_{\lfloor sh_n^{-1}\rfloor h_n})\,dW_s\,.\end{align}
\end{subequations}
In order to establish a functional CLT, we decompose the estimation errors of \eqref{lmm} (and likewise \eqref{specv}) in the following way:
\begin{subequations}
\begin{align}\label{dec1b}&\LMM_{n,t}^{or}(Y)-\operatorname{vec}\Big(\int_0^t\Sigma_s\,ds\Big)=\LMM_{n,t}^{or}(\bar C+\epsilon)-\operatorname{vec}\Big(\int_0^t\Sigma_{\lfloor sh_n^{-1}\rfloor h_n}\,ds\Big)\\
&\label{dec2b}+\LMM_{n,t}^{or}(Y)-\LMM_{n,t}^{or}(\bar C+\epsilon)-\operatorname{vec}\Big(\int_0^t\big(\Sigma_s-\Sigma_{\lfloor sh_n^{-1}\rfloor h_n}\big)\,ds\Big)
\,.\end{align}
\end{subequations}
One crucial step to cope with multi-dimensional non-synchronous data is Lemma \ref{key} which is proved next. Below, we give a concise proof of the functional CLTs for the estimators \eqref{lmm} and \eqref{specv}, where after restricting to a synchronous reference scheme many steps develop as direct extensions of the one-dimensional case. The stable CLTs for the \textit{leading terms}, namely the right-hand side of \eqref{dec1b} and the analogue for estimator \eqref{specv}, are established in paragraph \ref{subsec:l}. The \textit{remainder terms} \eqref{dec2b} and their analogues are handled in paragraph \ref{subsec:r}.
\subsubsection{{\bf{Proof of Lemma \ref{key}}}}
Consider for $l,m=1,\ldots,d,$ observation times $t_i^{(l)}=F_l^{-1}(i/n_l)$ and $t_i^{(m)}=F_m^{-1}(i/n_m)$. Define a next-tick interpolation function by
 \begin{align*}t_+^{(l)}(s)=\min{\left(t_v^{(l)},v=0,\dots,n_l|t_v^{(l)}\ge s\right)}\,,l=1,\ldots,d,\end{align*}
 and analogously a previous-tick interpolation function by
 \begin{align*}t_-^{(l)}(s)=\max{\left(t_v^{(l)},v=0,\dots,n_l|t_v^{(l)}\le s\right)}\,,l=1,\ldots,d.\end{align*}
 We decompose increments of $X^{(l)}$ between adjacent observation times $t_{v-1}^{(l)},t_{v}^{(l)},v=1,\ldots,n_l$, in the sum of increments of $X^{(l)}$ over all time intervals $[t_{i-1}^{(m)},t_i^{(m)}]$ contained in $[t_{v-1}^{(l)},t_v^{(l)}]$ and the remaining time intervals at the left $\big[t_{v-1}^{(l)},t_+^{(m)}\big(t_{v-1}^{(l)}\big)\big]$ and the right border $\big[t_-^{(m)}\big(t_{v}^{(l)}\big),t_{v}^{(l)}\big]$:
 \begin{align*}X_{t_v^{(l)}}^{(l)}-X_{t_{v-1}^{(l)}}^{(l)}=\Big(X_{t_v^{(l)}}^{(l)}-X_{t_-^{(m)}(t_{v}^{(l)})}^{(l)}\Big)+\sum_{\Delta_it^{(m)}\subset \Delta_vt^{(l)}}\Big(X_{t_i^{(m)}}^{(l)}-X_{t_{i-1}^{(m)}}^{(l)}\Big)+\Big(X_{t_+^{(m)}(t_{v-1}^{(l)})}^{(l)}-X_{t_{v-1}^{(l)}}^{(l)}\Big)\,.\end{align*}
  If there is only one observation of $X^{(m)}$ in $[t_{v-1}^{(l)},t_v^{(l)}]$, set $\sum_{\Delta_it^{(m)}\subset \Delta_vt^{(l)}}\big(X_{t_i^{(m)}}^{(l)}-X_{t_{i-1}^{(m)}}^{(l)}\big)=0$.\\ If there is no observation of $X^{(m)}$ in $[t_{v-1}^{(l)},t_v^{(l)}]$ we take the union of a set of intervals $\bigcup_{v\in V} [t_{v-1}^{(l)},t_v^{(l)}]$ which contains at least one observation time of $X^{(m)}$. We use an expansion of $\Phi_{jk}(t)-\Phi_{jk}(s)$.
By virtue of $\sin(t)-\sin(s)=2\cos((t+s)/2)\sin((t-s)/2)$ and the sine expansion, we obtain for $s,t\in[kh_n,(k+1)h_n)$:
\begin{align}\label{Phiexp}\Phi_{jk}(t)-\Phi_{jk}(s)\asymp \sqrt{2}h_n^{-3/2}j\pi\cos\big(j\pi h_n^{-1}(\tfrac{t+s}{2}-kh_n)\big)\,(t-s)\,.\end{align}
In particular, for $t-s=\mathcal{O} (n^{-1})$ we have that $\Phi_{jk}(t)-\Phi_{jk}(s)=\mathcal{O}\left(\varphi_{jk}(\tfrac{t+s}{2})n^{-1}\right)$.\\ With \( u_v^{(m)}=(1/2)(t_+^{(m)}(t_{v}^{(l)})-t_-^{(m)}(t_{v}^{(l)}))\) and \(\tilde u_v^{(m)}=(1/2)(t_+^{(m)}(t_{v-1}^{(l)})-t_-^{(m)}(t_{v-1}^{(l)}))\), we infer 
 \begin{align*}&\sum_{k=1}^{\lfloor th_n^{-1}\rfloor}h_n\sum_{j\ge 1}w_{jk}^{l,m}\sum_{i=1}^{n_l}\left(X_{t_i^{(l)}}^{(l)}-X_{t_{i-1}^{(l)}}^{(l)}\right) X^{(l)}\Phi_{jk}(\bar t_i^{(l)})\sum_{v=1}^{n_m}\left(X_{t_v^{(m)}}^{(m)}-X_{t_{v-1}^{(m)}}^{(m)}\right)\Phi_{jk}(\bar t_v^{(m)})\\
 &=\sum_{k=1}^{\lfloor th_n^{-1}\rfloor}h_n\sum_{j\ge 1}w_{jk}^{l,m}\sum_{i=1}^{n_l}\left(X_{t_i^{(l)}}^{(l)}-X_{t_{i-1}^{(l)}}^{(l)}\right)\Phi_{jk}(\bar t_i^{(l)})\sum_{v=1}^{n_l}\left(X_{t_v^{(l)}}^{(m)}-X_{t_{v-1}^{(l)}}^{(m)}\right)\Phi_{jk}(\bar t_v^{(l)})\\
 &+\sum_{k=1}^{\lfloor th_n^{-1}\rfloor}h_n\sum_{j\ge 1}w_{jk}^{l,m}\sum_{v=1}^{n_l}\Big(X_{t_v^{(l)}}^{(l)}-X_{t_{v-1}^{(l)}}^{(l)}\Big)\Phi_{jk}(\bar t_v^{(l)})\\
& \hspace*{.2cm}\times \Big(\sum_{\d_i t^{(m)}\subset\d_v t^{(l)}}\Big(X_{t_i^{(m)}}^{(m)}-X_{t_{i-1}^{(m)}}^{(m)}\Big)(\Phi_{jk}(\bar t_i^{(m)})-\Phi_{jk}(\bar t_v^{(l)}))+\\
 &\hspace*{.45cm}\Big(X_{t_+^{(m)}(t_{v-1}^{(l)})}^{(m)}-X_{t_{v-1}^{(l)}}^{(m)}\Big)\big(\Phi_{jk}(\tilde u_v^{(m)})\hspace*{-.05cm}-\hspace*{-.05cm}\Phi_{jk}(\bar t_v^{(l)})\big)\hspace*{-.05cm}+\hspace*{-.05cm}\Big(X_{t_v^{(l)}}^{(m)}-X_{t_-^{(m)}(t_{v}^{(l)})}^{(m)}\Big)\big(\Phi_{jk}( u_v^{(m)})\hspace*{-.05cm}-\hspace*{-.05cm}\Phi_{jk}(\bar t_v^{(l)})\big)\hspace*{-.05cm}\Big).\end{align*}
Since the observation times are independent of $X$ according to Assumption \ref{noise2}, we can employ basic estimates \eqref{e1} and \eqref{e2} for the above increments of $X$. Applying the bound \eqref{Phiexp}, we find that the order of the last summand is $\sum_k h_n\sum_j w_{jk}^{l,m} j/(nh_n)$ and since for all weights the bound \eqref{orderweights} holds we conclude that the approximation error is uniformly of order $\mathcal{O}_{\P}(h_n)=\KLEINO_{\P}(n^{-1/4})$. 

\subsubsection{{\bf{Leading terms}}\label{subsec:l}}
This paragraph develops the asymptotics for the right-hand side of \eqref{dec1b} and the sum of the increments in \eqref{illustrbiv}.
Observe that 
\begin{align}\label{h}\sum_{i=1}^{n_l-1}\varphi_{jk}^2\big(t_i^{(l)}\big)\Big(\tfrac{t_{i+1}^{(l)}-t_{i-1}^{(l)}}{2}\Big)^2\asymp\sum_{i=1}^{n_l-1}\varphi_{jk}^2\big(t_i^{(l)}\big)\tfrac{t_{i+1}^{(l)}-t_{i-1}^{(l)}}{2}\tfrac{(F_l^{-1})'(t_i^{(l)})}{n_l} \asymp \Big(\int_0^1\varphi_{jk}^2(t)\,dt\Big)\tfrac{H^{kh_n}_l}{\eta_l^2}\,.\end{align}
The left approximation uses $(t_{i+1}^{(l)}-t_i^{(l)})/2=(H_l^{kh_n}+\mathcal{O}(h_n^{\alpha}))/(\eta_l^2n_l)$ as in \eqref{qvt} with $\alpha>1/2$ by Assumption \ref{noise2}. Writing the integral on the right-hand side as sum over the subintervals and using mean value theorem, the differences when passing to the arguments $(t_i^{(l)})_i$  induce approximation errors of order $jh_n^{-1}n^{-1}$. Thus, the total approximation errors are of order $(h_n^{\alpha}+j(nh_n)^{-1})j^2(nh_n^2)^{-1}$.\\
We focus on the oracle versions of \eqref{lmm} and \eqref{specv} with their deterministic optimal weights. The proof follows the same methodology as the proof of Proposition \ref{propmain} after restricting to a synchronous reference observation scheme. We concisely go through the details for cross terms and the proof for the bivariate spectral covolatility estimator.\\
We apply Theorem 3-1 of \cite{jacodkey} (or equivalently Theorem 2.6\,in \cite{podvet}) again. For the spectral estimator \eqref{specv}, consider
\begin{align}\zeta_k^n=n^{1/4}\,h_n\Big(\sum_{j\ge 1}w_{jk}^{p,q}\zeta_{jk}^{(pq)}-\Sigma_{(k-1) h_n}^{(pq)}\Big)\,,\end{align}
with the random variables
\begin{align}\zeta_{jk}^{(pq)}&=\left(\Big(\sum_{i=1}^{n_p}\Delta_i^n \bar C^{(p)}\Phi_{jk}\big(\bar t_i^{(p)}\big)-\sum_{i=1}^{n_p-1}\epsilon_{t_i^{(p)}}^{(p)}\varphi_{jk}\big(t_i^{(p)}\big)\frac{t_{i+1}^{(p)}-t_{i-1}^{(p)}}{2}\Big)\right.\\
&\left. \notag \quad \times \Big(\sum_{v=1}^{n_q}\Delta_v^n \bar C^{(q)}\Phi_{jk}\big(\bar t_v^{(q)}\big)-\sum_{v=1}^{n_q-1}\epsilon_{t_{v}^{(q)}}^{(q)}\varphi_{jk}\big(t_v^{(q)}\big)\frac{t_{v+1}^{(q)}-t_{v-1}^{(q)}}{2}\Big)\right)-\pi^2 j^2h_n^{-2}\delta_{p,q}\hat H^{kh_n}_p \,.
\end{align}
The accordance with \eqref{illustrbiv} follows from a generalization of the summation by parts identity \eqref{sbp}:
\begin{align*}S_{jk}^{(p)}&\asymp_p -\sum_{v=1}^{n_p-1} Y_v^{(p)} \Big(\Phi_{jk}\big(\bar t_{v+1}^{(p)}\big)-\Phi_{jk}\big(\bar t_{v}^{(p)}\big)\Big)\\
&\asymp_p -\sum_{v=1}^{n_p-1}Y_v^{(p)}\varphi_{jk}(t_v^{(p)})\frac{t_{v+1}^{(p)}-t_{v-1}^{(p)}}{2}\,.\end{align*}
In the first relation the remainder is only due to end-effects as $\bar t_1^{(p)}\ne 0$ and $\bar t_{n_p}^{(p)}\ne 1$ and asymptotically negligible. Also, the second remainder by application of mean value theorem and passing to arguments $t_v^{(p)}$ is asymptotically negligible. This remainder can be treated as the approximation error between discrete and continuous-time norm of the $(\varphi_{jk})$ in the following.\\
By Lemma \ref{key} we may without loss of generality work under synchronous observations $t_i,i=0,\ldots,n$, when considering the signal part $X$. 
Set $\bar t_i=(t_{i+1}-t_i)/2$. We shall write in the sequel terms of the signal part as coming from observations on a synchronous grid $(t_i)$, while keeping to the actual grids for the noise terms. For the expectation we have
\begin{align*}&\E\left[\zeta_{jk}^{(pq)}\right]=\sum_{i=1}^n\Phi_{jk}^2(\bar t_i)\E\big[\Delta_i^n\bar C^{(p)}\Delta_i^n\bar C^{(q)}\big]\\
&+\hspace*{-.05cm}\sum_{i,v=1}^{(n_p\vee n_q)-1}\E\left[\epsilon_{t_i^{(p)}}^{(p)}\epsilon_{t_i^{(q)}}^{(q)}\right]\hspace*{-.05cm}\varphi_{jk}\big(t_i^{(p)}\big)\Big(\tfrac{t_{i+1}^{(p)}-t_{i-1}^{(p)}}{2}\Big)\varphi_{jk}\big(t_v^{(q)}\big)\Big(\tfrac{t_{v+1}^{(q)}-t_{v-1}^{(q)}}{2}\Big)-\pi^2j^2h_n^{-2}\delta_{p,q}\E\left[\hat H^{kh_n}_p\right]\\
&=\sum_{i=1}^n\Phi_{jk}^2(\bar t_i)(t_{i+1}-t_i)\Sigma^{(pq)}_{(k-1)h_n}\hspace*{-.05cm}+\delta_{p,q}\left(\eta_p^2\sum_{i=1}^{n_p}\varphi_{jk}^2\big(t_i^{(p)}\big)\Big(\tfrac{t_{i+1}^{(p)}-t_{i-1}^{(p)}}{2}\Big)^2\hspace*{-.05cm}-\hspace*{-.05cm}\pi^2j^2h_n^{-2}H^{kh_n}_p\right)\\
&=\Sigma_{(k-1)h_n}^{(pq)}+R_{n,k}\end{align*}
by It\^{o} isometry. The remainders due to the approximation \eqref{h} satisfy with \eqref{orderweights} uniformly
\begin{align*}R_{n,k}\lesssim \sum_{j=1}^{\lfloor \sqrt{n}h_n\rfloor}j^2n^{-1}h_n^{-2}\big(h_n^{\alpha}+jn^{-1}h_n^{-1}\big)+\sum_{\lceil \sqrt{n}h_n\rceil}^{\lfloor nh_n\rfloor-1}\big(j^{-1}h_n+j^{-2}h_n^2nh_n^{\alpha}\big)=\KLEINO\big(n^{-1/4}\big)\,.\end{align*}
Since $\sum_{j\ge 1}w_{jk}^{p,q}=1$, asymptotic unbiasedness is ensured:
\begin{align*}\sum_{k=1}^{\lfloor th_n^{-1}\rfloor}\E\left[\zeta_k^n\big|\mathcal{G}_{(k-1)h_n}\right]=\sum_{k=1}^{\lfloor th_n^{-1}\rfloor}n^{1/4}h_n\Bigg(\sum_{j\ge 1}w_{jk}^{p,q}\E[\zeta_{jk}^{(pq)}]-\Sigma^{(pq)}_{(k-1)h_n}\Bigg)\stackrel{ucp}{\longrightarrow}0\,.\end{align*}
We now determine the asymptotic variance expression in \eqref{avarspecv}:
\begin{align*}\var\big(\zeta_{jk}^{(pq)}\big)&=\Big(\sum_{i=1}^{n}\Phi_{jk}^2(\bar t_i)(t_{i+1}-t_i)\Big)^2\big(\big(\Sigma_{(k-1)h_n}^{(pq)}\big)^2+\Sigma_{(k-1)h_n}^{(pp)}\Sigma_{(k-1)h_n}^{(qq)}\big)\\
&\quad +\eta_p^2\eta_q^2\Big(\sum_{i=1}^{n_p-1}\varphi_{jk}^2\big(t_i^{(p)}\big)\Big(\tfrac{t_{i+1}^{(p)}-t_{i-1}^{(p)}}{2}\Big)^2\Big)\Big(\sum_{i=1}^{n_q-1}\varphi_{jk}^2\big(t_i^{(q)}\big)\Big(\tfrac{t_{i+1}^{(q)}-t_{i-1}^{(q)}}{2}\Big)^2\Big)\\
& \quad +\Bigg(\sum_{i=1}^n\Phi_{jk}^2(\bar t_i)(t_{i+1}-t_i)\Big(\eta_p^2 \Sigma_{(k-1)h_n}^{(qq)}\sum_{i=1}^{n_p-1}\varphi_{jk}^2\big(t_i^{(p)}\big)\Big(\tfrac{t_{i+1}^{(p)}-t_{i-1}^{(p)}}{2}\Big)^2\\
&\quad +\eta_m^2\Sigma^{(pp)}_{(k-1)h_n}\sum_{i=1}^{n_q-1}\varphi_{jk}^2\big(t_i^{(q)}\big)\Big(\tfrac{t_{i+1}^{(q)}-t_{i-1}^{(q)}}{2}\Big)^2\Big)\Bigg)\\
&\hspace*{-0cm}\asymp\big(\Sigma_{(k-1)h_n}^{(pq)}\big)^2+\Sigma_{(k-1)h_n}^{(pp)}\Sigma_{(k-1)h_n}^{(qq)}+\pi^2j^2h_n^{-2}\big(H^{kh_n}_p\Sigma_{(k-1)h_n}^{(qq)}+H^{kh_n}_q\Sigma_{(k-1)h_n}^{(pp)}\big)\\
&\hspace*{5.8cm}+\pi^4j^4h_n^{-4}H^{kh_n}_pH^{kh_n}_q,\end{align*}
where the remainder is negligible by the same bounds as for the bias above.
The sum of conditional variances with $w_{jk}^{p,q}=I_k^{-1}I_{jk}$, $I_k=\sum_{j\ge 1}I_{jk}$, thus yields
\begin{align*}\sum_{k=1}^{\lfloor th_n^{-1}\rfloor}\E\left[\left(\zeta_k^n\right)^2\big|\mathcal{G}_{(k-1)h_n}\right]+\KLEINO(1)&=\sum_{k=1}^{\lfloor th_n^{-1}\rfloor}h_n^2n^{1/2}\sum_{j\ge 1}\big(w_{jk}	^{(pq)}\big)^2\var \big(\zeta_{jk}^{(pq)}\big)\\&=\sum_{k=1}^{\lfloor th_n^{-1}\rfloor}h_n^2n^{1/2}\sum_{j\ge 1} I_{jk}I_k^{-2}=\sum_{k=1}^{\lfloor th_n^{-1}\rfloor}h_n^2n^{1/2}I_k^{-1}.\end{align*}
As $h_n\sqrt{n}\rightarrow \infty$, we obtain an asymptotic expression as the solution of an integral
\begin{align*}\sum_{k=1}^{\lfloor th_n^{-1}\rfloor}\E\left[\left(\zeta_k^n\right)^2\big|\mathcal{G}_{(k-1)h_n}\right]&=\sum_{k=1}^{\lfloor th_n^{-1}\rfloor}h_n(\sqrt{n}h_n)I_k^{-1}\rightarrow\int_0^t\left(\int_0^{\infty}(f(\Sigma,\mathcal{H}(t),\nu_p,\nu_q;z))^{-1}dz\right)^{-1}ds
\end{align*}
with a continuous limit function $f$ which is the same as in \cite{bibingerreiss}.
Computing the solution of the integral using the explicit form of $I_k$ and $f$ yields the variance $\int_0^t \big(v_s^{(p,q)}\big)^2\,ds$ with
\begin{align*}\big(v_s^{(p,q)}\big)^2&=2\left({(F_p^{-1})}^{\prime}(s){(F_q^{-1})}^{\prime}(s)\nu_p\nu_q(A_s^2-B_s)B_s\right)^{\frac12}\\
&\quad\quad\times \big(\sqrt{A_s+\sqrt{A_s^2-B_s}}-\operatorname{sgn}(A_s^2-B_s)\sqrt{A_s-\sqrt{A_s^2-B_s}}\big)\,,\end{align*}
and the terms
\vspace*{-.5cm}
$$A_s=\Sigma^{(pp)}_s\frac{{(F_q^{-1})}^{\prime}(s)\nu_q}{{(F_p^{-1})}^{\prime}(s)\nu_p}+\Sigma^{(qq)}_s\frac{{(F_p^{-1})}^{\prime}(s)\nu_p}{{(F_q^{-1})}^{\prime}(s)\nu_q}\,,$$
$$B_s=4\left(\Sigma^{(pp)}_s\Sigma^{(qq)}_s+\big(\Sigma^{(pq)}_s\big)^2\right)\,.$$
The detailed computation is carried out in \cite{bibingerreiss} and we omit it here. Contrarily to the one-dimensional case, in the cross term there is no effect of non-Gaussian noise on the variance because fourth noise moments do not occur and because of component-wise independence.\\
The Lyapunov criterion follows from
\begin{align*}&\E\left[\big(\zeta_{jk}^{(pq)}\big)^4|\mathcal{G}_{(k-1)h_n}\right]\asymp 3\sum_{j\ge 1}\big(w_{jk}^{p,q}\big)^4I_{jk}^{-2}\asymp 3\,I_k^{-4}\sum_{j\ge 1}I_{jk}^2=\mathcal{O}(1)\\
&\Rightarrow~~\sum_{k=1}^{\lfloor th_n^{-1}\rfloor}\E\left[\Big(\zeta_k^n\Big)^4\big|\mathcal{G}_{(k-1)h_n}\right]=\mathcal{O}\Big(n\sum_{k=1}^{\lfloor th_n^{-1}\rfloor}h_n^4\Big)=\KLEINO\left(n^{-1/4}\right)\,.\end{align*} 
By Cauchy-Schwarz and Burkholder-Davis-Gundy inequalities, we deduce
\begin{align*}
&\E\left[h_n\sum_{j\ge 1}w_{jk}^{p,q}\sum_{i=1}^n\Delta_i^n\bar C^{(p)}\Delta_i^n\bar C^{(q)}\Phi_{jk}^2(\bar t_i)\sum_{i=1}^n\Delta_i^n W^{(p)}\right]\\
&\quad =h_n\sum_{j\ge 1}w_{jk}^{p,q}\sum_{i=1}^n\E\left[\Delta_i^n \bar C^{(p)}\Delta_i^n\bar C^{(q)}\Delta_i^n W^{(p)}\right]\Phi_{jk}^2(\bar t_i)\\
&\le h_n\sum_{j\ge 1}w_{jk}^{p,q}\sum_{i=1}^n(t_i-t_{i-1})^{\frac32}\Phi_{jk}^2(\bar t_i)=\KLEINO\big( n^{-1/4}\big)\,.
\end{align*}
By the analogous estimate with $\Delta_i^n W^{(q)}$ the stability condition \eqref{s4} is valid. Condition \eqref{s5} is shown using a decomposition as in \eqref{eq:martingale_decomp} above and can be verified in an analogous way. This proves stable convergence of the leading term to the limit given in Theorem \ref{thm:2}.\\
The heart of the proof of Theorem \ref{thm:3} is the asymptotic theory for the leading term \eqref{dec1b}, namely the analysis of the asymptotic variance-covariance structure. This is carried out in detail in \cite{BHMR} for the idealized locally parametric experiment using bin-wise orthogonal transformation to a diagonal covariance structure. The only difference between our main term and the setup considered in \cite{BHMR} is the Gaussianity of the noise component. Yet, in the deduction of the variance this only affects the terms with fourth noise moments where $\E[\epsilon_i^4]\ne 3\E[\epsilon_i^2]$ in general. Above, we explicitly proved that the resulting remainder converges to zero for the one-dimensional estimator and this directly extends to the diagonal elements here. An intuitive heuristic reason why this holds is that the smoothed statistics are asymptotically still close to a normal distribution, though the normality which could have been used in \cite{BHMR} does not hold here for fixed $n$ in general. Based on the expressions of variances for cross products and squared spectral statistics above, coinciding their counterparts in the normal noise model when separating the remainder induced for the squares, we can pursue the asymptotics along the same lines as the proof of Corollary 4.3 in \cite{BHMR}. At this stage, we restrict to shed light on the connection between the expressions in \eqref{clt3} and the asymptotic variance-covariance matrix. 
Observe that $(A\otimes B)^{\top}=A^{\top}\otimes B^{\top}$ for matrices $A,B$, ${\cal{Z}}{\cal{Z}}=2\cal{Z}$ and that $(A\otimes B)(C\otimes D)=(AC\otimes BD)$ for matrices $A,B,C,D$, such that
\begin{align*}&\Big(\Sigma_s^{\frac12}\otimes\big(\Sigma_s^{\cal{H}}\big)^{\frac14}\Big){\cal{Z}}\Big(\Big(\Sigma_s^{\frac12}\otimes\big(\Sigma_s^{\cal{H}}\big)^{\frac14}\Big)\cal{Z}\Big)^{\top}\\
&\quad =\Big(\Sigma_s^{\frac12}\otimes\big(\Sigma_s^{\cal{H}}\big)^{\frac14}\Big)2{\cal{Z}}\Big(\Sigma_s^{\frac12}\otimes\big(\Sigma_s^{\cal{H}}\big)^{\frac14}\Big)^{\top}\\
&\quad = 2\,\Big(\Sigma_s\otimes\big(\Sigma_s^{\cal{H}}\big)^{\frac12}\Big)\cal{Z}\,,\end{align*}
since $\cal{Z}$ commutes with $\Big(\Sigma_s^{\frac12}\otimes\big(\Sigma_s^{\cal{H}}\big)^{\frac14}\Big)$. Therefore, the expression in \eqref{clt3} is natural for the matrix square root of the asymptotic variance-covariance, where we use two independent terms because of non-commutativity of matrix multiplication.
Conditions \eqref{s1} and \eqref{s3} and the stability conditions \eqref{s4} and \eqref{s5} can be analogously shown by element-wise adopting the results for squared and cross products of spectral statistics from above. Since any component of the estimator is a weighted sum of the entries of $S_{jk}S_{jk}^{\top}$, bias-corrected on the diagonal, the convergences to zero in probability follow likewise.

\subsubsection{\bf{Remainder terms}\label{subsec:r}}
After applying triangular inequality to \eqref{dec2b}, it suffices to prove that
\begin{align}\label{dec2c}n^{1/4}\|\LMM_{n,t}^{or}(Y)-\LMM_{n,t}^{or}(\bar C+\epsilon)\|\stackrel{ucp}{\longrightarrow}0\,,\end{align}
\begin{align}\label{dec2d}n^{1/4}\left\|\int_0^t\operatorname{vec}\big(\Sigma_s-\Sigma_{\lfloor sh_n^{-1}\rfloor h_n}\big)\,ds\right\|\stackrel{ucp}{\longrightarrow}0\,.\end{align}
For $A,B\in\R^d$, we use in the following several times the elementary bound:
\begin{align}\label{bound}\left\|AA^{\top}-BB^{\top}\right\|=\left\| B(A^{\top}-B^{\top})+(A-B)A^{\top}\right\|\le \big(\|A\|+\|B\|\big)\|A-B\|\,.\end{align}
Define analogously as above $\tilde S_{jk}=\big(\sum_{i=1}^{n_p}\Delta_i^n {\bar C}^{(p)}\Phi_{jk}\big(\bar t_i^{(p)}\big)\big)_{1\le p\le d}$, the spectral statistics in the locally constant volatility experiment. Then we can bound uniformly for all $t$:
\begin{align*}&\|\LMM_{n,t}^{or}(Y)-\LMM_{n,t}^{or}(\bar C+\epsilon)\|\\
&\le \sum_{k=1}^{h_n^{-1}}h_n\,\left\|\sum_{j=1}^{\lfloor nh_n\rfloor -1} W_{jk}\operatorname{vec}\big(S_{jk}S_{jk}^{\top}-\tilde S_{jk}\tilde S_{jk}^{\top}\big)\right\|\\
&\le \sum_{k=1}^{h_n^{-1}}h_n\sum_{j=1}^{\lfloor nh_n\rfloor -1} \|W_{jk}\|\big(\|S_{jk}\|+\|\tilde S_{jk}\|\big)\|S_{jk}-\tilde S_{jk}\|\\
&\lesssim \sum_{k=1}^{h_n^{-1}}h_n\sum_{j=1}^{\lfloor nh_n\rfloor -1} \Big(1+\frac{j^2}{nh_n^2}\Big)^{-2}\|S_{jk}-\tilde S_{jk}\|=\mathcal{O}_{\P}\big(h_n\big)=\KLEINO_{\P}(n^{-1/4}\big)\,,\end{align*}
what yields \eqref{dec2c}. We have used Lemma C.1 from \cite{BHMR} for the magnitude of $\|W_{jk}\|$, the bound \eqref{bound} and a bound for the sum over $j$, for which holds 
\[\frac{1}{\sqrt{n}h_n}\sum_{j=1}^{\lfloor nh_n \rfloor-1}\Big(1+\frac{j^2}{nh_n^2}\Big)^{-2}\rightarrow \frac{\pi}{2}\]
by an analogous integral approximation as used in the limiting variance before. Drift terms and cross terms including the drift are asymptotically negligible and are handled similarly as before. Directly neglecting drift terms, we deduce $\|S_{jk}-\tilde S_{jk}\|=\mathcal{O}_{\P}(h_n)$ uniformly from $(S_{jk}-\tilde S_{jk})^{(p)}\asymp_p \sum_{i=1}^{n_p}\Delta_i^n\tilde C^{(p)}\Phi_{jk}(\bar t_i)$ with \eqref{e2}. \eqref{dec2d} is equivalent to
\begin{align}\label{dec2e}n^{1/4}\Bigg\|\sum_{k=1}^{h_n^{-1}}\int_{(k-1)h_n}^{kh_n}\big(\Sigma_s-\Sigma_{(k-1)h_n}\big)\,ds\Bigg\|\stackrel{ucp}{\longrightarrow} 0\,.\end{align}
Using the decomposition
\begin{align*}\Sigma_s-\Sigma_{(k-1)h_n}&=\sigma_s\sigma_s^{\top}-\sigma_{(k-1)h_n}\sigma_{(k-1)h_n}^{\top}\\
&=(\sigma_s-\sigma_{(k-1)h_n})\sigma_{(k-1)h_n}^{\top}+\sigma_{(k-1)h_n}(\sigma_s^{\top}-\sigma_{(k-1)h_n}^{\top})\\
&\quad +(\sigma_s-\sigma_{(k-1)h_n})(\sigma_s^{\top}-\sigma_{(k-1)h_n}^{\top})\end{align*}
for $s\in[(k-1)h_n,kh_n]$, it is easy to find that it suffices to bound terms $\|\sigma_s-\sigma_{(k-1)h_n}\|$. Then, Assumption \ref{Hd} guarantees \eqref{dec2e} and \eqref{dec2d} in the same way as for the one-dimensional model.\\
For the spectral covolatility estimator \eqref{specv} we may conduct an analysis of the remainder similarly as in the proof of Proposition \ref{propremainder}. One can as well employ 
integration by parts of It\^{o} integrals after supposing again a synchronous observation design $t_i,i=0,\ldots,n$, possible according to Lemma \ref{key}: 
\begin{align}&\notag\Delta_i^n \tilde C^{(p)}\Delta_i^n \tilde C^{(q)}-\int_{t_{i-1}}^{t_i}\big(\Sigma^{(pq)}_s-\Sigma_{\lfloor sh_n^{-1}\rfloor h_n}^{(pq)}\big)ds\\
&\quad \label{ibp}=\int_{t_{i-1}}^{t_i}\big(\tilde C_s^{(p)}-\tilde C_{t_{i-1}}^{(p)}\big)d\tilde C_s^{(q)}+\int_{t_{i-1}}^{t_i}\big(\tilde C_s^{(q)}-\tilde C_{t_{i-1}}^{(q)}\big)d\tilde C_s^{(p)}\,.\end{align}
with $\tilde C$ approximation errors as in \eqref{appr3}. Consider the random variables
\begin{align*}&\tilde \zeta_{jk}^{(pq)}=\sum_{i=1}^n\Delta_i\tilde C^{(p)}\Phi_{jk}(\bar t_i)\sum_{v=1}^n\Delta_v\tilde C^{(q)}\Phi_{jk}(\bar t_v)\,,\\
&\tilde \zeta_k^n=h_n\sum_{j\ge 1}w_{jk}^{p,q}\tilde\zeta_{jk}^{(pq)}-\int_{kh_n}^{(k+1)h_n}\big(\Sigma^{(pq)}_s-\Sigma_{\lfloor sh_n^{-1}\rfloor h_n}^{(pq)}\big)ds\,.\end{align*}
Inserting \eqref{ibp} for $\Delta_i^n\tilde C^{(p)} \Delta_i^n\tilde C^{(q)}$, using $[\int Z\,dX,\int Z\,dX]=\int Z^2\,d[X,X]$ for It\^{o} integrals and applying Burkholder-Davis-Gundy inequalities and using the bound \eqref{e2} for $\E\big[\big(\Delta_i^n\tilde C^{(p)}\big)^2\big]$,\\ $\E\big[\big(\Delta_i^n\tilde C^{(q)}\big)^2\big]$, it follows that $\E\big[\big(\tilde \zeta_k^n\big)^2\big]=\mathcal{O}(n^{-1})$. Bounds for cross terms with $\tilde C$ and $\bar C$ readily follow by standard estimates and we conclude our claim.
\subsection{\bf{Proofs for adaptive estimation}}
We carry out the proof of Proposition \ref{tight1} in the case $d=1$
explicitly. We need to show that 
\begin{align}
n^{\frac{1}{4}}\left|\IV_{n,t}-\IV_{n,t}^{or}(Y)\right|\stackrel{ucp}{\longrightarrow}0~~\mbox{as}~n\rightarrow\infty\,.\label{lastaim}
\end{align}
Let us first act as if the noise level $\eta$ was known and concentrate
on the harder problem of analyzing the plug-in estimation of the instantaneous
squared volatility process $\sigma_{t}^{2}$ in the weights. We have
to bound 
\[
\IV_{n,t}-\IV_{n,t}^{or}(Y)=\sum_{k=1}^{\lfloor th_{n}^{-1}\rfloor}h_{n}\sum_{j=1}^{\lfloor nh_{n}\rfloor-1}\big(\hat{w}_{jk}-w_{jk}\big)\Big(S_{jk}^{2}-[\varphi_{jk},\varphi_{jk}]_{n}\frac{\eta^{2}}{n}\Big)\,,
\]
uniformly with $w_{jk}$ being the optimal oracle weights \eqref{weights}
and $\hat{w}_{jk}$ their adaptive estimates. 
We introduce a coarse grid of blocks of lengths $r_{n}$ such that $r_{n}h_{n}^{-1}\rightarrow\infty$
as $n\rightarrow\infty$, $r_nh_n^{-1}\in\N$. We analyze the above difference in this
double asymptotic framework, where the plug-in estimators are evaluated
on the coarse grid first. Denoting the adaptive and oracle estimators
with weights evaluated on the coarse grid by $\IV_{n,t}^{c}$ and
$\IV_{n,t}^{or,c}(Y)$, respectively, we have 
\begin{equation}
\IV_{n,t}^{c}-\IV_{n,t}^{or,c}(Y)=\sum_{m=1}^{\lfloor tr_{n}^{-1}\rfloor}h_{n}\sum_{k=(m-1)r_{n}h_{n}^{-1}+1}^{mr_{n}h_{n}^{-1}}\sum_{j=1}^{\lfloor nh_{n}\rfloor-1}\big(w_{j}(\hat{\sigma}_{(m-1)r_{n}}^{2})-w_{j}(\sigma_{(m-1)r_{n}}^{2})\big)\,Z_{jk}\label{eq:-5}
\end{equation}
with $Z_{jk}=S_{jk}^{2}-[\varphi_{jk},\varphi_{jk}]_{n}\eta^{2}/n-\sigma_{(k-1)h_{n}}^{2}$
where the weights are functions (independent of the bin $k$, as
$[\varphi_{jk},\varphi_{jk}]_{n}$ does not depend on $k$) 
\[
w_{j}(x)=\frac{\big(x+\frac{\eta^{2}}{n}[\varphi_{jk},\varphi_{jk}]_{n}\big)^{-2}}{\sum_{l=1}^{\lfloor nh_{n}\rfloor-1}\big(x+\frac{\eta^{2}}{n}[\varphi_{lk},\varphi_{lk}]_{n}\big)^{-2}}\,,
\]
which are well-defined for $x\in\R_+$ and satisfy $\sum w_{j}(x)=1$. As $\sigma$ is uniformly bounded
from below and from above by Assumption \ref{H1} and Section \ref{sec:6.1},
there exists a constant $C_{1}>0$ such that uniformly 
\begin{equation}
w_{j}\left(\sigma_{t}^{2}\right)\lesssim w_{j}\left(C_{1}\right)\label{eq:uniform_weight_bound}
\end{equation}
for all $j\geq1$. Based on the standard estimates \eqref{JM2}, \eqref{JM} as well as \eqref{e2}, we may consider $\tilde{Z}_{jk}=\tilde{S}_{jk}^{2}-[\varphi_{jk},\varphi_{jk}]_{n}\eta^{2}/n-\sigma_{(k-1)h_{n}}^{2}$ where the $\tilde{S}_{jk}$ are the statistics under locally parametric
volatility and without drift, and the remainder is asymptotically negligible. Moreover, by subtracting $\sigma_{(k-1)h_{n}}^{2}$
in the definition of $\tilde{Z}_{jk}$ equation \eqref{drift} shows
that the $\tilde{Z}_{jk}$ are uncorrelated for different $k$. Hence,
$\var\big(\sum_{k}\tilde{Z}_{jk}\big)=\sum_{k}\var(\tilde{Z}_{jk})$ and thus
\begin{equation}
\var\big(\sum_{k}Z_{jk}\big)=\sum_{k}\var(Z_{jk})+\KLEINO(1)\,.\label{eq:var_Z}
\end{equation}
We prove \eqref{lastaim} in two steps. We show first that \eqref{eq:-5}
is $\KLEINO_{\P}(n^{-1/4})$ uniformly and then that the difference
between estimating on the coarse and finer grid is $\KLEINO_{\P}(n^{-1/4})$,
as well.\\
The crucial property to ensure tightness of the adaptive approach
is a uniform bound on the first derivatives of the weight functions:
$w_{j}(x)$ is continuously differentiable with derivatives satisfying:
\begin{align}
\big|w_{j}'\left(x\right)\big|\lesssim w_{j}\left(x\right)\log^{2}(n)\,.\label{derivativebound}
\end{align}
To see why this holds set $c_{j}=\frac{\eta^{2}}{n}\left[\varphi_{jk},\varphi_{jk}\right]_{n}$
and observe that 
\begin{eqnarray*}
\left|w_{j}'\left(x\right)\right| & = & \left|\frac{-2\left(x+c_{j}\right)^{-3}\sum_{m=1}^{\lfloor nh\rfloor-1}\left(x+c_{m}\right)^{-2}-\left(x+c_{j}\right)^{-2}\sum_{m=1}^{\lfloor nh\rfloor-1}\left(\left(-2\right)\left(x+c_{m}\right)^{-3}\right)}{\left(\sum_{m=1}^{\lfloor nh\rfloor-1}\left(x+c_{m}\right)^{-2}\right)^{2}}\right|\\
 & \leq & 2w_{j}\left(x\right)\frac{\sum_{m=1}^{\lfloor nh\rfloor-1}\left(x+c_{m}\right)^{-2}\left|\left(x+c_{j}\right)^{-1}-\left(x+c_{m}\right)^{-1}\right|}{\sum_{m=1}^{\lfloor nh\rfloor-1}\left(x+c_{m}\right)^{-2}}\lesssim w_{j}\left(x\right)\log^{2}(n)
\end{eqnarray*}
for $n$ sufficiently large. The last inequality follows from 
\begin{eqnarray*}
\left|\left(x+c_{j}\right)^{-1}-\left(x+c_{m}\right)^{-1}\right|\leq\frac{1}{c_{j}}+\frac{1}{c_{m}}\lesssim\frac{1}{c_{1}}=\c O\left(\log^{2}(n)\right).
\end{eqnarray*}
The plug-in estimator \eqref{pilot1} satisfies $\|\hat{\sigma}^{2}-\sigma^{2}\|_{L^{1}}=\mathcal{O}_{\P}\big(\delta_{n}\big)$
for the $L^{1}$-norm $\|\,\cdot\,\|_{L^{1}}$ with a sequence $\delta_{n}\rightarrow0$
as $n\rightarrow\infty$, $\delta_{n}\lesssim n^{-1/8}$ for optimal
window length on Assumption \ref{H1}. Hence, by \eqref{eq:uniform_weight_bound}
$w_{j}(\hat{\sigma}_{(m-1)r_{n}}^{2}-w_{j}(\sigma_{(m-1)r_{n}}^{2})=\mathcal{O}_{\P}(w_{j}(C_{1})\delta_{n}\log^{2}(n))$.
This, \eqref{eq:var_Z}, Cauchy-Schwarz and \eqref{orderweights}
show that 
\begin{align}
 \notag \E\hspace*{-.05cm}\left[\big|\IV_{n,t}^{c}\hspace*{-.05cm}-\hspace*{-.05cm}\IV_{n,t}^{or,c}(Y)\big|\right]&\hspace*{-.05cm} \lesssim\E\hspace*{-.05cm}\left[\sum_{m=1}^{\lfloor tr_{n}^{-1}\rfloor}\hspace*{-.1cm}h_{n}\hspace*{-.1cm}\sum_{j=1}^{\lfloor nh_{n}\rfloor-1}\hspace*{-.05cm}\left|w_{j}(\hat{\sigma}_{(m-1)r_{n}}^{2})\hspace*{-.05cm}-\hspace*{-.05cm}w_{j}(\sigma_{(m-1)r_{n}}^{2})\right|\hspace*{-.05cm}\left|\sum_{k=(m-1)r_{n}h_{n}^{-1}+1}^{mr_{n}h_{n}^{-1}}\hspace*{-.25cm}Z_{jk}\right|\right]\nonumber \\
 &\notag \lesssim\delta_{n}\left(\log^{2}(n)\right)\sum_{m=1}^{\lfloor tr_{n}^{-1}\rfloor}h_{n}\sum_{j=1}^{\lfloor nh_{n}\rfloor-1}w_{j}\left(C_{1}\right)\Big(\var\Big(\sum_{k=(m-1)r_{n}h_{n}^{-1}+1}^{mr_{n}h_{n}^{-1}}Z_{jk}\Big)\Big)^{1/2}\nonumber \\
 & \lesssim\sqrt{\frac{h_{n}}{r_{n}}}\delta_{n}\log^{2}{n}+\KLEINO\left(1\right).\label{neueordnung}
\end{align}
The required order $\KLEINO_{\P}(n^{-1/4})$ for \eqref{eq:-5} is
thus achieved if $r_{n}\rightarrow0$ not too fast, i.e.\,$r_{n}^{-1}\lesssim n^{1/4}(\log{n})^{-5}$.\\
Consider the remainder by the difference of coarse and fine grid.
Since for $\IV_{n,t}^{or,c}(Y)$ and $\IV_{n,t}^{or}(Y)$ the statistics
for each block $k$ are uncorrelated, it is enough to bound the variance
of the difference by 
\[
\sum_{m=1}^{r_{n}^{-1}}\sum_{k=(m-1)r_{n}h_{n}^{-1}+1}^{mr_{n}h_{n}^{-1}}\hspace*{-0.1cm}h_{n}^{2}\left(\sum_{j=1}^{\lfloor nh_{n}\rfloor -1}\left(\E\left[\big(w_{j}(\sigma_{(k-1)h_{n}}^{2})-w_{j}(\sigma_{(m-1)r_{n}}^{2})\big)^{2}\tilde{Z}_{jk}^{2}\right]\right)^{\frac{1}{2}}\right)^{2}\hspace*{-0.1cm}\lesssim h_{n}r_{n}\log^{4}{(n)}\,,
\]
using \eqref{derivativebound} and \eqref{neueordnung}. 
This shows that $|\IV_{n,t}^{or,c}(Y)-\IV_{n,t}^{or}(Y)|=\KLEINO_{\P}(n^{-1/4})$
uniformly. Exploiting the same ingredients as above we obtain likewise
that $|\IV_{n,t}^{c}-\IV_{n,t}|=\KLEINO_{\P}(n^{-1/4})$ uniformly.
In order to analyze the estimation error induced by pre-estimation
of $\eta^{2}$, we can consider the weights as functions of $\eta^{2}$
and compute their derivatives. As $\eta^{2}$ does not depend on time
and we have $|\hat{\eta}^{2}-\eta^{2}|=\mathcal{O}_{\P}(n^{-1/2})$,
a simpler computation yields that the pre-estimation of $\eta^{2}$
is of smaller order as the error by plug-in estimation of local volatilities.
Thus, using triangle inequality we conclude \eqref{lastaim}.\\
The proofs that Theorem \ref{thm:2} and Theorem \ref{thm:3} extend from
the oracle to the adaptive versions of the estimators \eqref{specv}
and \eqref{lmm} can be conducted in an analogous way. For covariation
matrix estimation, the key ingredient is the uniform bound on the
norm of the matrix derivative of the weight matrix function $W_{j}(\Sigma)$
w.r.t.\,$\Sigma$, which is a matrix with $d^{6}$ entries and requires
a notion of matrix derivatives, see Lemma C.2 in \citet{BHMR}. The
proof is then almost along the same lines as the proof of Theorem
4.4 in \citet{BHMR}, with the only difference in the construction
being that the $Z_{jk}$ are not independent, but still have negligible
correlations. The adaptivity in the proof of Theorem 4.4 of \citet{BHMR}
is proved under more delicate asymptotics of asymptotically separating
sample sizes. For this reason, but at the same time not having the
remainders, the restrictions on $r_{n}$ are different there.
\bibliographystyle{elsart-harv}
\bibliography{literatur}
\end{document}